\newcommand{\R}{\mathbb R}
\newcommand{\N}{\mathbb N}
\newtheorem{theorem}{Theorem}[section]
\newtheorem{corollary}{Corollary}[section]
\newtheorem{proposition}{Proposition}[section]
\newtheorem{definition}{Definition}[section]
\newtheorem{remark}{Remark}[section]
\newtheorem{lemma}{Lemma}[section]
\newcommand{\ds}{\displaystyle}
\renewcommand{\epsilon}{\varepsilon}
\newcommand{\eps}{\epsilon}
\renewcommand{\phi}{\varphi}
\newenvironment{proofof}[1]{\smallskip\noindent\emph{Proof of #1.}%
\hspace{1pt}}{\hspace{-5pt}{\nobreak\quad\nobreak\hfill\nobreak%
$\square$\vspace{8pt}\par}\smallskip\goodbreak}
\newlength{\captionwidth}
\long\def\@makecaption#1#2{%
   \vskip 10\p@
   \setbox\@tempboxa\hbox{#1: #2}%
   \ifdim \wd\@tempboxa > \captionwidth 
       \hbox to\hsize{\hfil
       \parbox[t]{\captionwidth}{
       \small#1: \small#2\par}
       \hfil}
     \else
       \hbox to\hsize{\hfil\box\@tempboxa\hfil}%
   \fi}
\tikzset{
  on each segment/.style={
    decorate,
    decoration={
      show path construction,
      moveto code={},
      lineto code={
        \path [#1]
        (\tikzinputsegmentfirst) -- (\tikzinputsegmentlast);
      },
      curveto code={
        \path [#1] (\tikzinputsegmentfirst)
        .. controls
        (\tikzinputsegmentsupporta) and (\tikzinputsegmentsupportb)
        ..
        (\tikzinputsegmentlast);
      },
      closepath code={
        \path [#1]
        (\tikzinputsegmentfirst) -- (\tikzinputsegmentlast);
      },
    },
  },
  mid arrow/.style={postaction={decorate,decoration={
        markings,
        mark=at position .6 with {\arrow[#1]{stealth}}
      }}},
}
\begin{document}

\title{Uniqueness and nonuniqueness of fronts for degenerate diffusion-convection reaction equations}

\author{
Diego Berti\footnote{Department of Sciences and Methods for Engineering, University of Modena and Reggio Emilia, Italy}
\and
Andrea Corli\footnote{Department of Mathematics and Computer Science, University of Ferrara, Italy}
\and
Luisa Malaguti\footnotemark[1]
}



\maketitle

\begin{abstract}
We consider a scalar parabolic equation in one spatial dimension. The equation is constituted by a convective term, a reaction term with one or two equilibria, and a positive diffusivity which can however vanish. We prove the existence and several properties of traveling-wave solutions to such an equation. In particular, we provide a sharp estimate for the minimal speed of the profiles and improve previous results about the regularity of wavefronts. Moreover, we show the existence of an infinite number of semi-wavefronts with the same speed.

\vspace{1cm}
\noindent \textbf{AMS Subject Classification:} 35K65; 35C07, 34B40, 35K57

\smallskip
\noindent
\textbf{Keywords:} Degenerate and doubly degenerate diffusivity, diffusion-convection-reaction equations, traveling-wave solutions, sharp profiles, semi-wavefronts.
\end{abstract}

%

\section{Introduction}\label{s:I}

We study the existence and qualitative properties of traveling-wave solutions to the scalar diffusion-convection-reaction equation
\begin{equation}\label{e:E}
\rho_t + f(\rho)_x=\left(D(\rho)\rho_x\right)_x + g(\rho), \qquad t\ge 0, \, x\in \R.
\end{equation}
Here $\rho=\rho(t,x)$ is the unknown variable and takes values in the interval $[0,1]$. The convective term $f$ satisfies the condition
\begin{itemize}
\item[{(f)}]\, $f\in C^1[0,1]$, $f(0)=0$.
\end{itemize}
\noindent
The requirement $f(0)=0$ is not a real assumption, since $f$ is defined up to an additive constant; we denote $h(\rho)=\dot f(\rho)$, where with a dot we intend the derivative with respect to the variable $\rho$ (or $\phi$ later on). About the diffusivity $D$ and the reaction term $g$ we consider two different scenarios, where the assumptions are made on the pair $D$, $g$; we assume either
\begin{itemize}
\item[{(D1)}] \, $D\in C^1[0,1]$, $D>0$ in $(0,1)$ and $D(1)=0$,

\item[{(g0)}]\, $g\in C^0[0,1]$, $g>0$ in $(0,1]$, $g(0)=0$,

\end{itemize}

or else

\begin{itemize}
\item[{(D0)}] \, $D\in C^1[0,1]$, $D>0$ in $(0, 1)$ and $D(0)=0$,

\item[{(g01)}]\, $g\in C^0[0,1]$, $g>0$ in $(0,1)$, $g(0)=g(1)=0$.

\end{itemize}
In the above notation, the numbers suggest where it is {\em mandatory} that the corresponding function vanishes. Notice that (D1) leaves open the possibility for $D$ to vanish or not at $0$, and (D0) for $D$ at $1$. We refer to Figure \ref{f:f} for a graphical illustration of these assumptions. Notice that the product $Dg$ always vanishes at both $0$ and $1$ under both set of assumptions.

\smallskip

\begin{figure}[htb]
\begin{center}

\begin{tikzpicture}[>=stealth, scale=0.6]
\draw[->] (0,0) --  (6,0) node[below]{$\rho$} coordinate (x axis);
\draw[->] (0,0) -- (0,4) node[right]{$f$} coordinate (y axis);
\draw[thick] (0,0) .. controls (1.5,4) and (3.5,4) .. (5,1) ;
\draw[dotted] (5,0) -- (5,1);
\draw(5,0)  node[below]{\footnotesize{$1$}};

\begin{scope}[xshift=7cm]
\draw[->] (0,0) --  (6,0) node[below]{$\rho$} coordinate (x axis);
\draw[->] (0,0) -- (0,4) node[right]{$D$} coordinate (y axis);
\draw[thick] (0,2) .. node[near start,above]{\footnotesize{(D1)}} controls (1,3) and (4,3) .. (5,0); 
\draw[thick, dashed] (0,0) .. node[very near end,above]{\footnotesize{(D0)}} controls (1,2) and (2,2.5) .. (5,3);
\draw(5,0) node[below]{$1$};
\draw[dotted] (5,0) -- (5,3);
\end{scope}

\begin{scope}[xshift=14cm]
\draw[->] (0,0) --  (6,0) node[below]{$\rho$} coordinate (x axis);
\draw[->] (0,0) -- (0,4) node[right]{$g$} coordinate (y axis);
\draw[thick] (0,0) .. node[near end,above]{\footnotesize{(g0)}} controls (0.2,2.5) and (3,2.8) .. (5,3); 
\draw[thick,dashed] (0,0) .. node[midway,below]{\footnotesize{(g01)}} controls (1,2.5) and (4,2.5) .. (5,0); 
\draw(5,0) node[below]{\footnotesize{$1$}};
\draw[dotted] (5,0) -- (5,3);
\end{scope}

\end{tikzpicture}

\end{center}
\caption{\label{f:f}{Typical plots of the functions $f$, $D$ and $g$. In the plots of $D$ and $g$, solid or dashed lines depict pairs of functions $D$ and $g$ that are considered together in the following. The possibility that $D$ vanishes at the other extremum is left open.}}
\end{figure}
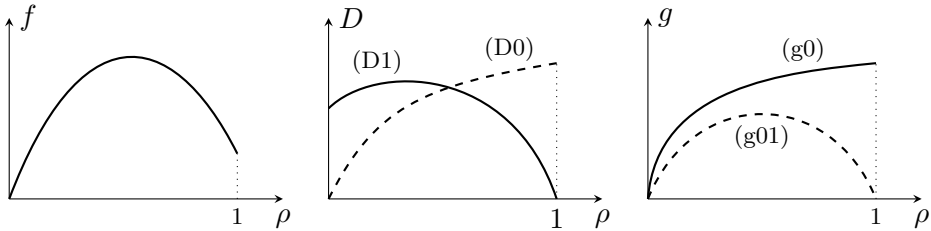

We also require the following condition on the product of $D$ and $g$:
\begin{equation}
\label{Dg}
\limsup_{\varphi \to 0^+}\frac{D(\varphi)g(\varphi)}{\varphi} < +\infty,
\end{equation}
which is equivalent to $D(\varphi)g(\varphi)\le L\phi$, for some $L>0$ and $\phi$ in a right neighborhood of $0$.

\smallskip

In \eqref{e:E}, the notation $\rho=\rho(t,x)$ suggests a density; this is indeed the case. Recently, the modeling of collective movements has attracted the interest of several mathematicians \cite{Garavello-Han-Piccoli_book, Garavello-Piccoli_book, Rosinibook}. This paper is partly motivated by such a research stream and carries on the analysis of a scalar parabolic model begun in \cite{CdRMR, CM-DPDE, CM-ZAMP}. Indeed, if $f(\rho)=\rho v(\rho)$, where the velocity $v$ is an assigned function, then equation \eqref{e:E} can be understood as a simplified model for a crowd walking with velocity $v$ along a straight path with  side entries for other pedestrians, which are modeled by $g$; here $\rho$ is understood as the crowd normalized density. Assumption (g01), for instance, means that pedestrians do not enter if the road is empty ($g(0)=0$, modeling an aggregative behavior) or if it is fully occupied ($g(1)=0$, because of lack of space). If the diffusivity is small, then the diffusion term accounts for some \lq\lq chaotic\rq\rq\ behavior, which is common in crowds movements. In this framework, $D$ may degenerate at the extrema of the interval where it is defined \cite{Bellomo-Delitala-Coscia, BTTV, Nelson_2000}; for more details we refer to \cite{CM-DPDE}.
The assumption (g0) is better motivated by population dynamics. In this case $g$ is a growth term which, for instance, increases with the population density $\rho$. We refer to \cite{Murray} for analogous modelings in biology. Anyhow, apart from the above possible applications, equation \eqref{e:E} is a quite general diffusion-convection-reaction equation that deserves to be fully understood.

\smallskip

A {\em traveling-wave solution} is, roughly speaking, a solution to \eqref{e:E} of the form $\rho(t,x)= \phi(x-ct)$, for some profile $\phi=\phi(\xi)$ and constant wave speed $c$, see \cite{GK} for general information. In this case the profile must satisfy, in some sense, the equation
\begin{equation}\label{e:ODE}
\left(D(\phi)\phi^{\prime}\right)^{\prime}+\left(c -h(\phi)\right)\phi' + g(\phi)=0,
\end{equation}
where ${'}$ denotes the derivative with respect to $\xi$.
We consider in this paper non-constant, monotone profiles, and focus on the case they are decreasing. As a consequence, we aim at determining solutions to \eqref{e:ODE} whose values at $\pm\infty$ are the zeroes of the function $g$ and then satisfy either
\begin{equation}\label{e:infty}
\phi(-\infty)=1, \qquad \phi(+\infty)=0,
\end{equation}
or simply
\begin{equation}\label{e:infty0}
\phi(+\infty)=0,
\end{equation}
according to we make assumption (g01) or (g0). The former profiles are called {\em wavefronts}, the latter are {\em semi-wavefronts}; precise definitions are provided in Definition \ref{d:tws}. Notice that in both cases the equilibria may be reached for a finite value of the variable $\xi$ as a consequence of the degeneracy of $D$ at those points. These solutions represent single-shape smooth transitions between the two constant densities $0$ and $1$. The interest of wavefronts lies in the fact that they are viscous approximations of shock waves to the inviscid version of equation \eqref{e:E}, i.e., when $D=0$. Semi-wavefronts lack of this motivation but are nevertheless meaningful for applications \cite{CM-DPDE}; moreover, wavefronts connecting \lq\lq nonstandard\rq\rq\ end states can be constructed by pasting semi-wavefronts \cite{CM-ZAMP}. At last, we point out that assumption \eqref{Dg} is usual in this framework, when looking for decreasing profiles, see e.g. \cite{Aronson-Weinberger}.

%
%
%
%
%

\smallskip

If $D(\rho)\ge0$, the existence of solutions to the initial-value problem for \eqref{e:E} is more or less classical \cite{Vazquez}; however, the {\em fine structure} of traveling waves reveals a variety of different patterns. We refer to \cite{Malaguti-Marcelli_2002, MMconv}, respectively, for the cases where $D$ is non degenerate, i.e., $D>0$, and for the degenerate case, where $D$ can vanish at either $0$ or $1$. The main results of those papers is that there is a critical threshold $c^*$, depending on both $f$ and the product $Dg$, such that traveling waves satisfying \eqref{e:infty} exist if and only if $c\ge c^*$. The smoothness of the profiles depend on $f$, $D$ and $c$ but not on $g$. In both papers the source term satisfies (g01); see \cite{CdRMR, CM-DPDE} for the case when $g$ has only one zero.

The case when $D$ changes sign, which is not studied in this paper, also has strong motivations: we quote \cite{Maini-Malaguti-Marcelli-Matucci2006, Padron} for biological models and \cite{CM-ZAMP} for applications to collective movements.
Several results about traveling waves have been obtained in \cite{CM-ZAMP, Ferracuti-Marcelli-Papalini, Kuzmin-Ruggerini, Maini-Malaguti-Marcelli-Matucci2006, Maini-Malaguti-Marcelli-Matucci2007}.

\smallskip

In this paper we study semi-wavefronts and wavefronts for \eqref{e:E}, thus completing the analysis of \cite{CdRMR, CM-DPDE}. We prove that in both cases there is a threshold $c^*$ such that profiles only exists for $c\ge c^*$; we also study their regularity and strict monotonicity, namely whether they are {\em classical} (i.e., $C^1$) or {\em sharp} (and then reach an equilibrium at a finite $\xi$ in a no more than continuous way). We strongly rely on \cite{ Malaguti-Marcelli_2002, MMconv} and exploit some recent results obtained in \cite{Marcelli-Papalini}. Several examples are scattered throughout the paper to show that our assumptions are necessary in most cases.

This research has some important novelties. First, we give a refined estimate for $c^*$, which allows to better understand the meaning of this threshold. Second, we improve a result obtained in \cite{MMconv} about the appearance of wavefronts with a sharp profile. Third, in the case of semi-wavefronts, we show that for any speed $c\ge c^*$ there exists a family of profiles with speed $c$. This phenomenon does not show up in \cite{CdRMR, CM-DPDE}.

The main tool to investigate \eqref{e:ODE} is the analysis of singular first-order problems as
\begin{equation}
\label{first order problem000}
\left\{
\begin{array}{ll}
\dot{z}(\varphi)=h(\varphi)-c-\frac{D(\varphi) g(\varphi)}{z(\varphi)}, &\varphi\in (0,1),\\
z(\varphi) < 0, & \varphi \in (0,1),
\\
z(0)=0.
\end{array}
\right.
\end{equation}
Problem \eqref{first order problem000} is deduced by problem \eqref{e:ODE}-\eqref{e:infty0} by the singular change of variables $z(\phi) := D(\phi)\phi'$, where the right-hand side is understood to be computed at $\phi^{-1}(\phi)$, see e.g. \cite{CM-DPDE, Malaguti-Marcelli_2002}. Notice that $\phi^{-1}$ exists by the assumption of monotony of $\phi$.

On the other hand, the analysis of problem \eqref{first order problem000} is fully exploited in the forthcoming paper \cite{BCM2}, which deals with the case in which $D$ changes sign once. In that paper we show that there still exist wavefronts joining $1$ with $0$, which travel across the region where $D$ is negative; they are constructed by pasting two semi-wavefronts obtained in the current paper. Similar results in the case $g=0$ are proved in \cite{CM-ZAMP}.

\smallskip

Here is an account of the paper. In Section \ref{sec:main} we provide some basic definitions and state our main results. The analysis of problem \eqref{first order problem000} and of other related singular problems occupies Sections \ref{sec:first order 1} to \ref{ssec:dotz at zero}. Then, in Sections \ref{s:existence_0alpha} and \ref{s:rnew egularity of fronts} we exploit such results to construct semi-wavefronts and wavefronts, respectively; there, we prove our main results.

\section{Main results}\label{sec:main}
\setcounter{equation}{0}

We give some definitions on traveling waves and their profiles. Let $I\subseteq \R$ be an open interval.

\begin{definition}\label{d:tws} Assume $f,D,g\in C[0,1]$. Consider a function $\varphi\in C(I)$ with values in $[0,1]$, which is differentiable a.e. and such that $D(\varphi) \varphi^{\, \prime}\in L_{\rm loc}^1(I)$; let $c$ be a real constant. The function $\rho(x,t):=\varphi(x-ct)$, for $(x,t)$ with $x-ct \in I$, is a {\em traveling-wave} solution of equation \eqref{e:E} with wave speed $c$ and wave profile $\phi$ if, for every $\psi\in C_0^\infty(I)$,
\begin{equation}\label{e:def-tw}
\int_I \left(D\left(\phi(\xi)\right)\phi'(\xi) - f\left(\phi(\xi)\right) + c\phi(\xi) \right)\psi'(\xi) - g\left(\phi(\xi)\right)\psi(\xi)\,d\xi =0.
\end{equation}
\end{definition}
\noindent
Definition \ref{d:tws} can be made more precise. Below, {\em monotonic} means that $\phi(\xi_1)\le \phi(\xi_2)$ (or $\phi(\xi_1)\ge \phi(\xi_2)$) for every $\xi_1<\xi_2$ in the domain of $\phi$; in {\em (iii)} we assume $g(0)=g(1)=0$, while in {\em (iv)} we only require that $g$ vanishes at the point which is specified by the semi-wavefront. A traveling-wave solution is

\begin{enumerate}[{\em (i)}]

\item {\em global} if $I=\R$ and {\em strict} if $I\ne \R$ and $\phi$ is not extendible to $\R$;

\item {\em classical} if $\varphi$ is differentiable, $D(\varphi) \varphi'$ is absolutely continuous and \eqref{e:ODE} holds a.e.; {\em sharp at $\ell$} if there exists $\xi_{\ell}\in I$ such that $\phi(\xi_{\ell})=\ell$, with $\phi$ classical in $I\setminus\{\xi_\ell\}$ and not differentiable at $\xi_{\ell}$;

\item {\em a wavefront} if it is global, with a monotonic, non-constant profile $\phi$ satisfying either \eqref{e:infty} or the converse condition;

\item {\em a semi-wavefront to} $1$ (or {\em to} $0$) if $I=(a,\infty)$ for $a\in\R$, the profile $\phi$ is monotonic, non-constant and $\phi(\xi)\to1$ (respectively, $\phi(\xi)\to0$) as $\xi\to\infty$; {\em a semi-wavefront from} $1$ (or {\em from} $0$) if $I=(-\infty,b)$ for $b\in\R$, the profile $\phi$ is monotonic, non-constant and $\phi(\xi)\to1$ (respectively, $\phi(\xi)\to0$) as $\xi\to-\infty$.

\end{enumerate}

\noindent In {\em (iv)} we say that $\phi$ connects $\phi(a^+)$ ($1$ or $0$) with $1$ or $0$ (resp., with $\phi(b^-)$).

The smoothness of a profile depends on the degeneracy of $D$, see \cite{GK}. More precisely, assume (f), and either (D1), (g0) or (D0), (g01); let $\rho$ be any traveling-wave solution of \eqref{e:E} with profile $\phi$ defined in $I$ and speed $c$. Then $\phi$ is classical in each interval $J \subset I$ where $D\left(\phi(\xi)\right)> 0$ for $\xi \in J$, and $\phi\in C^2\left(J\right)$. Profiles are determined up to a space shift.

Our first main result concerns {\em semi-wavefronts}.

\begin{theorem}
\label{th:swf to zero}
Assume {\em (f)}, {\em (D1)}, {\em (g0)} and \eqref{Dg}. Then, there exists $c^*\in\R$, which satisfies
\begin{equation}
\label{estimates on c* new}
\max\left\{\sup_{\phi \in (0,1]} \frac{f(\phi)}{\phi}, h(0) + 2\sqrt{\liminf_{\phi \to 0^+} \frac{D(\phi)g(\phi)}{\phi}} \right\} \leq c^* \leq 2\sqrt{\sup_{\varphi \in (0,1]} \frac{D(\phi)g(\phi)}{\varphi}}+ \sup_{\varphi \in (0,1]} \frac{f(\phi)}{\phi},
\end{equation}
such that \eqref{e:E} has strict semi-wavefronts to $0$, connecting $1$ to $0$, if and only if $c\geq c^*$.
\par
Moreover, if $\phi$ is the profile of one of such semi-wavefronts, then it holds that
\begin{equation}
\label{strict monotonicity}
\varphi\rq{}(\xi) < 0 \ \mbox{ for any } \ 0<\varphi(\xi) < 1.
\end{equation}
\end{theorem}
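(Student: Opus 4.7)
The plan is to reduce problem \eqref{e:ODE}--\eqref{e:infty0} to the singular first-order problem \eqref{first order problem000} through the standard substitution $z(\varphi):=D(\varphi)\varphi'(\xi)$ with $\xi=\varphi^{-1}(\varphi)$, well defined once $\varphi$ is strictly monotone. A strict semi-wavefront to $0$ connecting $1$ to $0$ at speed $c$ corresponds exactly to a solution $z$ of \eqref{first order problem000} that extends up to $\varphi=1$ and stays strictly negative on $(0,1)$. The profile is then recovered from $\xi'(\varphi)=D(\varphi)/z(\varphi)$: assumption \eqref{Dg} together with a linear asymptotic $z(\varphi)\sim\lambda\varphi$ at $0$ (with $\lambda<0$) makes $\int_{0^+}D/z$ diverge, forcing $\varphi(\xi)\to 0$ as $\xi\to+\infty$; the finiteness or not of $\int^{1^-}D/z$ determines whether the value $1$ is reached at a finite $a$.

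Granting the local shooting analysis developed in Sections \ref{sec:first order 1}--\ref{ssec:dotz at zero} (which for each $c\in\R$ produces a maximal solution $z_c$ of \eqref{first order problem000} depending monotonically on $c$), I would define
\[
c^*:=\inf\{c\in\R:z_c\text{ extends to }[0,1]\text{ with }z_c<0\text{ in }(0,1)\},
\]
the monotone dependence on $c$ ensuring that this set is a closed half-line $[c^*,+\infty)$. For the lower bound in \eqref{estimates on c* new}: from $z_c<0$ and $Dg\ge 0$, the ODE yields $\dot z_c\ge h(\varphi)-c$; integrating on $(0,\varphi)$ with $z_c(0)=0$ gives $z_c(\varphi)\ge f(\varphi)-c\varphi$, and combined with $z_c\le 0$ this forces $c\ge f(\varphi)/\varphi$ for every $\varphi\in(0,1]$. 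For the square-root term, substituting $z_c(\varphi)\sim\lambda\varphi$ into \eqref{first order problem000} along a sequence that realizes the liminf produces the compatibility
\[
\lambda^2-(h(0)-c)\lambda+\liminf_{\varphi\to 0^+}\frac{D(\varphi)g(\varphi)}{\varphi}=0,
\]
whose real solvability requires $(c-h(0))^2\ge 4\liminf Dg/\varphi$, and the negativity of $\lambda$ forces $c\ge h(0)+2\sqrt{\liminf Dg/\varphi}$.

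For the upper bound, set $N:=\sup_{\varphi\in(0,1]}f(\varphi)/\varphi$ and $M:=\sup_{\varphi\in(0,1]}D(\varphi)g(\varphi)/\varphi$ (both finite by $C^1$ regularity of $f$ and by \eqref{Dg}), and exhibit the explicit supersolution
\[
\bar z(\varphi):=f(\varphi)-(N+\sqrt{M})\varphi,
\]
which satisfies $\bar z(0)=0$ and $\bar z(\varphi)\le -\sqrt{M}\varphi<0$ on $(0,1]$. Since $|\bar z|\ge\sqrt{M}\varphi$ gives $D(\varphi)g(\varphi)/\bar z(\varphi)\ge -\sqrt{M}$, for every $c\ge 2\sqrt{M}+N$ one finds
\[
\dot{\bar z}(\varphi)-h(\varphi)+c+\frac{D(\varphi)g(\varphi)}{\bar z(\varphi)}=c-N-\sqrt{M}+\frac{D(\varphi)g(\varphi)}{\bar z(\varphi)}\ge c-N-2\sqrt{M}\ge 0.
\]
A comparison argument for \eqref{first order problem000} (whose right-hand side is increasing in $z$ on $\{z<0\}$) then yields $z_c\le\bar z<0$ on $(0,1)$; combined with the lower bound $z_c\ge f-c\varphi$, this shows $z_c$ stays bounded and hence extends to $[0,1]$. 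Thus $c^*\le 2\sqrt{M}+N$.

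The strict monotonicity \eqref{strict monotonicity} is then immediate: for $\xi$ with $0<\varphi(\xi)<1$ one has $\varphi'(\xi)=z_c(\varphi(\xi))/D(\varphi(\xi))<0$, since $z_c<0$ and $D>0$ on $(0,1)$. The main difficulty is not in this translation but in the shooting analysis of \eqref{first order problem000} at the singular endpoint $\varphi=0$ — establishing the linear asymptotic $z_c(\varphi)\sim\lambda\varphi$, the monotone continuous dependence of $z_c$ on $c$, and the sharpness of $c^*$ — which is precisely the technical content of Sections \ref{sec:first order 1}--\ref{ssec:dotz at zero}, and once granted, the above reconstruction of the semi-wavefront is routine.
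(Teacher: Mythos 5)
Your proposal follows the same overall strategy as the paper: reduce to the singular first-order problem \eqref{first order problem000} via $z(\phi)=D(\phi)\phi'$, define $c^*$ through a shooting argument, establish the two-sided bounds on $c^*$, and then transfer back to profiles. The lower bound $c^*\ge\sup f(\phi)/\phi$ via $z\ge f-c\phi$ is exactly the paper's argument (formula \eqref{e:bound beta} and Corollary \ref{cor:c*}). However, there are three genuine gaps in the sketch.

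First, the claim that \eqref{Dg} together with $z(\phi)\sim\lambda\phi$ makes $\int_{0^+}D/z$ diverge is wrong. Under (D1), $D(0)$ may vanish; if, say, $D(\phi)\sim\alpha\phi$ then $D/z\to\alpha/\lambda$ and the integral converges, so the profile reaches $0$ at a \emph{finite} $\xi_0$. This case actually occurs (Corollary \ref{cor:swf qualitative behavior} (iii)) and the construction must handle it by explicitly extending $\phi$ by the constant $0$ for $\xi\ge\xi_0$ and then verifying \eqref{e:def-tw} across $\xi_0$ (as the paper does via \eqref{asymptotic behavior to b-}--\eqref{integral 2}). Your narrative omits this extension at $\xi_0$ while correctly including the analogous one at $a$.

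Second, the derivation of the lower-bound term $h(0)+2\sqrt{\liminf Dg/\phi}$ via the linear asymptotic $z(\phi)\sim\lambda\phi$ with compatibility $\lambda^2-(h(0)-c)\lambda+\liminf Dg/\phi=0$ presupposes that $\dot z(0)$ exists. Under the theorem's assumptions this need not hold: the paper requires the additional hypothesis \eqref{e:D1g1 at zero} for $\dot z(0)$ to exist (Proposition \ref{prop:dotz at zero}), and Remark \ref{r:ex_Diego} gives an explicit counterexample where $\dot z(0)$ fails to exist. The estimate in the paper comes from Proposition \ref{prop: first order}, which cites the independent argument of \cite[Proposition 1]{MMM2010}, not from a linear expansion at $0$.

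Third, the upper bound. Your supersolution $\bar z(\phi)=f(\phi)-(N+\sqrt M)\phi$ is a genuine upper-solution and the computation $\dot{\bar z}-h+c+Dg/\bar z\ge c-N-2\sqrt M\ge0$ is correct, but you cannot directly compare it with a candidate solution because $\bar z(0)=0$: the comparison Lemma \ref{lem:cm-dpde} (2.a.ii) requires $0>\eta(\sigma_1)\ge z(\sigma_1)$ at the left endpoint, which degenerates at $\phi=0$. The paper circumvents this with the $\tau$-approximation in Lemma \ref{lem:new above est c*}: it builds solutions $z_\tau$ starting at $z_\tau(\tau)=-K\tau<0$, traps them between $f-c\phi$ and $-K\phi$, and passes to the limit $\tau\to0^+$ using Lemma \ref{lem:convergence}. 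Your $K=\sqrt M$ is indeed a valid choice for the paper's $K$ (it lies between the roots of $K^2+(N-c)K+M=0$ when $c>N+2\sqrt M$), so the two constructions are morally the same; but the degeneracy at $0$ means the explicit supersolution alone does not close the argument, and the approximation/limit step is essential, not a refinement.
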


For a fixed $c>c^*$, the profiles of Theorem \ref{th:swf to zero} are {\em not} unique. This lack of uniqueness is not due only to the action of space shifts but, more intimately, to the non-uniqueness of solutions to problem \eqref{first order problem000} that is proved in Proposition \ref{prop: first order ii} below. Roughly speaking, these profiles depend on a parameter $b$ ranging in the interval $[\beta(c),0]$, for a suitable threshold $\beta(c)\le0$. As a conclusion, the family of profiles can be precisely written as
\begin{equation}\label{e:beta1}
\phi_b=\phi_b(\xi), \quad \hbox{ for }b\in[\beta(c),0].
\end{equation}
Moreover, $\beta(c)<0$ if $c>c^*$ and $\beta(c)\to-\infty$ as $c\to+\infty$. The threshold $\beta(c)$ essentially corresponds to the minimum value that the quantity $D(\phi_b)\phi_b'$ achieves when $\phi_b$ reaches $1$, for $b\in[\beta(c),0]$. This loss of uniqueness is a novelty if we compare Theorem \ref{th:swf to zero} with analogous results in \cite{CdRMR, CM-DPDE}. In particular, in \cite[Theorem 2.7]{CM-DPDE} the assumptions on the functions $D$ and $g$ are {\em reversed}: both of them are positive in $(0,1)$ with $D(0)=0<g(0)$, $D(1)>0=g(1)$; in \cite[Theorem 2.3]{CdRMR} $D$ and $g$ are still positive in $(0,1)$ but the vanishing conditions are  $D(1)=0=g(1)$. In both cases the profiles exist for every $c\in\R$ and are unique. The different results are due to the nature of the equilibria of the dynamical systems of \eqref{e:ODE}.

\smallskip

The estimates \eqref{estimates on c* new} deserve some comments. The left estimate improves analogous bounds (see \cite{Marcelli-Papalini} for a comprehensive list) by including the term $\sup_{\phi \in (0,1]}f(\phi)/\phi\ge h(0)$ on the left-hand side. This improvement looks more significative if we also assume $\dot{(Dg)}(0)=0$, as we do in the Theorem \ref{th:regularity wf}. In this case \eqref{estimates on c* new} reduces to
\begin{equation}
\label{e:great0}
\sup_{\phi \in (0,1]} \frac{f(\phi)}{\phi} \le c^* \leq 2\sqrt{\sup_{\varphi \in (0,1]} \frac{D(\phi)g(\phi)}{\varphi}}+ \sup_{\varphi \in (0,1]} \frac{f(\phi)}{\phi}.
\end{equation}
which can be written with obvious notation as
\[
c_{con}\le c^*\le c_{dr} + c_{con},
\]
where the indexes label velocities related to the convection or diffusion-reaction components. In \eqref{e:great0} the {\em same} term, accounting for the dependence on $f$, occurs in {\em both} the lower and upper bound. This symmetry, which shows the shift of the critical threshold as a consequence of the convective term $f$, occurs in {\em none} of the previous estimates.

The meaning of $c_{dr}$ is known since \cite{Aronson-Weinberger}; we comment on $c_{con}$. In the diffusion-convection case (i.e., when $g=0$), there exist profiles connecting $\ell\in(0,1]$ to $0$ if and only if
\begin{equation}\label{e:line_condition}
s_\ell(\phi):= \frac{f(\ell)}{\ell}\phi>f(\phi),\quad \hbox{ for } \phi\in(0,\ell),
\end{equation}
see \cite[Theorem 9.1]{GK}. The quantity $c_{con}$ then represents the {\em maximal speed} that can be reached by the profiles connecting $\ell$ to $0$, for $\ell\in(0,1]$. Condition \eqref{e:line_condition} is also necessary and sufficient in the purely hyperbolic case (i.e., when also $D=0$) in order that the equation $u_t+f(u)_x=0$ admits a shock wave of speed $f(\ell)/\ell$ with $\ell$ as left state and $0$ as right state. This is not surprising since the viscous profiles approximate the shock wave and converge to it in the vanishing viscosity limit. Indeed, condition \eqref{e:line_condition} does not depend on $D$.

The presence of the positive reaction term $g$ satisfying (g01) (if (g0) holds we only have semi-wavefronts, but the same bounds still hold) does not allow profile speeds to be less than $c_{con}$: assuming that $z$ satisfies $\eqref{first order problem000}$, by the positivity of both $D$ and $g$ we deduce
\begin{equation}\label{e:zaz}
c\ge \sup_{\phi\in(0,1]}\left(\frac{f(\phi)}{\phi} - \frac{z(\phi)}{\phi}\right)\ge c_{con}.
\end{equation}
Then, $c_{con}$ now becomes a bound for the {\em minimal speed} of the profiles. The bound \eqref{e:zaz} is strict (i.e., there is a gap between $c_{con}$ and $c^*$) if $\dot{\left(Dg\right)}(0)>0$; this occurs for instance if $D(0)>0$ and $\dot{g}(0)>0$ and follows by integrating $\eqref{first order problem000}_1$ from $0$ to $\phi$ and \eqref{estimates on c* new}, see Remark \ref{r:gap}. If $f=0$, then the corresponding strict bound $c^*>0$ occurs for any positive and continuous $D$ and $g$: if $c^*=0$ then $z$ should be an increasing function by \eqref{first order problem}, a contradiction.

In some cases, semi-wavefronts are sharp at $0$. We refer to Corollary \ref{cor:swf qualitative behavior} for a detailed account of the behavior of the profiles when they reach the equilibrium.

\smallskip

We now present our result on {\em wavefronts}; we assume that $D$ and $g$ satisfiy (D0) and (g01). The goal is to extend results contained in \cite[Theorems 2.1 and 6.1]{MMconv} regarding the existence and, more importantly, the regularity of wavefronts of Equation \eqref{e:E}. In particular, the next theorem has the merit to derive the classification of wavefronts under (D0), merely, without additional assumptions (which were instead required in \cite[Theorems 2.1 and 6.1]{MMconv}). Notice that in the following result we require that $D$ vanishes at $0$; this assumption leads to improve not only the left-hand bound \eqref{estimates on c* new} on $c^*$ by \eqref{e:great0}, but also the right-hand bound, by means of a recent integral estimate provided in \cite{Marcelli-Papalini}.

\begin{theorem}
\label{th:regularity wf}
Assume {\em (f)}, {\em (D0)} and {\em (g01)} and \eqref{Dg}. Then there exists $c^*$, satisfying
\begin{equation}
\label{e:est c*3}
\sup_{\phi \in (0,1]} \frac{f(\phi)}{\phi} \le c^* \le \sup_{\phi \in (0,1]} \frac{f(\phi)}{\phi} + 2\sqrt{\sup_{\phi\in (0,1]} \frac1{\phi}\int_{0}^{\phi} \frac{D(\sigma)g(\sigma)}{\sigma}\,d\sigma},
\end{equation}
such that Equation \eqref{e:E} admits a (unique up to space shifts) wavefront, whose wave profile $\phi$ satisfies \eqref{e:infty}, if and only if $c\ge c^*$.
Moreover, we have $\phi\rq{}(\xi) <0$, for $0<\phi(\xi) <1$, and
\begin{enumerate}[(i)]
\item if $c>c^*$, then $\phi$ is classical at $0$;

\item if $c=c^*$ and $c^*>h(0)$, then $\phi$ is sharp at $0$ and if it reaches $0$ at $\xi_0 \in \R$ then
\begin{equation*}
\lim_{\xi \to \xi_0^-} \phi\rq{}(\xi)=
\left\{
\begin{array}{ll}
\frac{h(0)-c^*}{\dot{D}(0)} <0 \ &\mbox{ if } \ \dot{D}(0)>0,\\[2mm]
-\infty \ &\mbox{ if } \ \dot{D}(0)=0.
\end{array}
\right.
\end{equation*}
\end{enumerate}
\end{theorem}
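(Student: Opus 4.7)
The plan is to reduce the ODE \eqref{e:ODE} with boundary conditions \eqref{e:infty} to a two-point singular boundary-value problem for the first-order equation \eqref{first order problem000} via the substitution $z(\phi) := D(\phi)\phi'$, where $\phi'$ is evaluated at $\xi = \phi^{-1}(\phi)$ thanks to the monotonicity of $\phi$. A decreasing wavefront profile then corresponds to a solution $z$ of the first-order ODE on $(0,1)$ that is strictly negative there and satisfies both $z(0) = 0$ and $z(1) = 0$; the profile itself is recovered through the quadrature $d\xi = D(\phi)/z(\phi)\,d\phi$, uniquely determined up to translation.

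Next I would invoke the analysis of \eqref{first order problem000} developed in Sections \ref{sec:first order 1}--\ref{ssec:dotz at zero} (and used in the proof of Theorem \ref{th:swf to zero}): for each $c$ large enough, solutions with $z(0)=0$ exist parameterized by $b = z(1) \in [\beta(c), 0]$, and the wavefront corresponds to the endpoint $b = 0$. Setting
\begin{equation*}
c^* := \inf\{c \in \R : \text{\eqref{first order problem000} admits a negative solution on } (0,1) \text{ with } z(1) = 0\},
\end{equation*}
a monotone-in-$c$ comparison argument would show that such a solution exists for every $c \geq c^*$ and for no smaller $c$. Uniqueness up to translation follows since the solution with $z(1) = 0$ is unique, and then $\phi$ is fixed by quadrature up to the choice of a base point. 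Strict monotonicity $\phi' < 0$ on $(0,1)$ is immediate from $z < 0$ and $D > 0$ there.

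For the estimates \eqref{e:est c*3}, I would obtain the lower bound by integrating $\eqref{first order problem000}_1$ from $0$ to $\phi \in (0,1]$:
\begin{equation*}
z(\phi) = f(\phi) - c\phi - \int_0^\phi \frac{D(\sigma)g(\sigma)}{z(\sigma)}\,d\sigma.
\end{equation*}
Since $z < 0$, the last integral is negative, hence $z(\phi) > f(\phi) - c\phi$; coupling this with $z(\phi) < 0$ yields $c > f(\phi)/\phi$ for every $\phi\in (0,1]$, and taking the supremum gives $c^* \geq \sup_{(0,1]} f/\phi$. For the upper bound I would follow the integral estimate of \cite{Marcelli-Papalini}: construct a suitable subsolution to \eqref{first order problem000} based on the averaged quantity $\phi^{-1}\int_0^\phi D(\sigma) g(\sigma)/\sigma\,d\sigma$, which is finite thanks to (D0) and in general strictly smaller than $\sup Dg/\phi$, yielding the improvement over \eqref{estimates on c* new}.

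The regularity classification is the core of the statement and depends on the asymptotic behavior of the selected solution $z_c$ at $\phi = 0^+$. A phase-plane analysis of \eqref{first order problem000} at the singular point $(0,0)$, combined with the selection forced by $z(1) = 0$, produces two distinct asymptotic regimes. For $c = c^*$ (and $c^* > h(0)$), the wavefront sits on the ``principal'' branch with $z_{c^*}(\phi)/\phi \to h(0) - c^* < 0$; then $\int_0^{\phi_1} D/|z_{c^*}|\,d\phi$ is finite, $\phi$ reaches $0$ at a finite $\xi_0$, and
\begin{equation*}
\lim_{\xi \to \xi_0^-} \phi'(\xi) = \lim_{\phi \to 0^+} \frac{z_{c^*}(\phi)}{D(\phi)} = \frac{h(0) - c^*}{\dot D(0)} \quad \text{or} \quad -\infty,
\end{equation*}
according to whether $\dot D(0) > 0$ or $\dot D(0) = 0$; in either case, the nonzero (possibly infinite) limiting slope prevents differentiability at $\xi_0$, so the profile is sharp. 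For $c > c^*$, a monotonicity argument in $c$ and comparison with $z_{c^*}$ would show that $z_c$ lies on a ``secondary'' branch with $z_c(\phi) = o(\phi)$; consequently $\phi'(\xi) \to 0$ and $\int_0^{\phi_1} D/|z_c|\,d\phi = +\infty$, so $\phi$ reaches $0$ only as $\xi \to +\infty$ and the wavefront is classical.

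The main obstacle is the branch analysis underpinning part (i): proving rigorously that the wavefront solution $z_c$ with $z(1) = 0$ abandons the linear ``principal'' branch for the ``secondary'' one precisely when $c > c^*$. This requires a careful study of the singular point $(0,0)$ of \eqref{first order problem000} and of how the shooting condition $z(1) = 0$ selects among branches as $c$ varies; the improved lower bound from \eqref{e:great0} is what makes the strict inequality $c^* > h(0)$ meaningful in case (ii) and drives the dichotomy.
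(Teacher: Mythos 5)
Your overall strategy is the same as the paper's: reduce the profile ODE to the singular first-order two-point problem \eqref{first order problem 0-0} via $z(\phi)=D(\phi)\phi'$, obtain the threshold $c^*$ and the refined estimates \eqref{e:est c*3} (lower bound from integrating the equation using $z<0$; upper bound following the Marcelli--Papalini integral estimate as in Remark \ref{rem:MP}), then classify the regularity by the asymptotics of $z$ at $\phi=0^+$ (principal branch $\dot z(0)=h(0)-c^*$ when $c=c^*$; secondary branch $\dot z(0)=0$ when $c>c^*$, which is Proposition \ref{prop:dotz at zero} applied with $z(1)=0>\hat\beta$ and $\dot q(0)=0$). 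You also honestly flag the branch-selection dichotomy as the main unresolved technical step; in the paper this is exactly the content of Proposition \ref{prop:dotz at zero}.

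There is one genuine error in your argument for part (i). You deduce from $z_c(\phi)=o(\phi)$ that $\int_0 D/|z_c|\,d\phi=+\infty$ and hence that $\phi$ reaches $0$ only as $\xi\to+\infty$. This does not follow: since $D(0)=0$ too, the ratio $D/|z_c|$ need not be non-integrable near $0$ (e.g.\ if $D\sim\phi$ and $|z_c|\sim\phi^{3/2}$, the integral converges). In fact Corollary \ref{cor:swf qualitative behavior}\,(ii) shows that for $c>c^*$ the profile \emph{can} reach $0$ at a finite $\xi_0$ whenever $c>h(0)+\limsup_{\phi\to0^+}g(\phi)/\phi$. The correct reason the profile is classical for $c>c^*$ is simply that $\lim_{\xi\to\xi_0^-}\phi'(\xi)=\lim_{\phi\to0^+}z(\phi)/D(\phi)=0$: this is immediate from $\dot z(0)=0$ when $\dot D(0)>0$, but when $\dot D(0)=0$ it requires the additional comparison argument (the strict upper-solution $\eta=-\eps D$ and Lemma \ref{lem:cm-dpde}\,(2.a.i) yielding $z>-\eps D$ near $0$, hence $z/D\to0$) that the paper supplies; your sketch omits this case. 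Once $\phi'(\xi_0^-)=0$ is established, the profile extended by $0$ is differentiable at $\xi_0$ whether or not $\xi_0$ is finite, which is all ``classical'' requires.
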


As in analogous cases \cite{CM-DPDE}, Theorem \ref{th:regularity wf} provides no information about the smoothness of the profiles when $c=c^*=h(0)$. We show in Remark \ref{r:final?} that in such a case profiles may be either sharp or classical.

\section{Singular first-order problems}
\label{sec:first order 1}
\setcounter{equation}{0}

Here we begin the analysis of problem \eqref{first order problem000}.
First, we consider, for $c\in\R$, the problem
\begin{equation}
\label{first order problem0}
\begin{cases}
\dot{z}(\varphi)=h(\varphi)-c-\frac{q(\varphi)}{z(\varphi)}, \ \varphi\in (0,1),\\
z(\varphi) < 0 , \ \varphi \in (0,1),
\end{cases}
\end{equation}
where we assume
\begin{equation}\label{e:q}
q \in C^0[0,1]\quad \hbox{ and }\quad q>0 \hbox{ in } (0,1).
\end{equation}
We point out that the differential equation $\eqref{first order problem0}_1$ generalizes $\eqref{first order problem000}_1$ since the assumptions on $q$ are a bit less strict than the ones on $Dg$, under (D1)-(g0) or (D1)-(g01). In the following lemma we prove that a solution of \eqref{first order problem0} can be extended continuously up to the boundary.

\begin{lemma}
\label{lem:zlimit}
Assume \eqref{e:q}. If $z\in  C^1(0,1)$ is a solution of \eqref{first order problem0}, then it can be extended continuously to the interval $[0,1]$.
\end{lemma}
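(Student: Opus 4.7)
The key observation is that, under the sign conditions $z<0$ and $q>0$, the derivative $\dot z$ admits a uniform lower bound on $(0,1)$. Since $-q(\varphi)/z(\varphi)\ge 0$, equation $\eqref{first order problem0}_1$ yields $\dot z(\varphi)\ge h(\varphi)-c\ge K$, where $K:=\min_{[0,1]} h-c$ is finite (recall $h=\dot f\in C^0[0,1]$). Equivalently, the map $\varphi\mapsto z(\varphi)-K\varphi$ is nondecreasing on $(0,1)$, and therefore admits one-sided limits $L_0$ and $L_1$ at $\varphi=0$ and $\varphi=1$, belonging a priori to $[-\infty,+\infty)$ and $(-\infty,+\infty]$ respectively.

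I would first dispatch the endpoint $\varphi=1$: since $z<0$ on $(0,1)$, the quantity $z(\varphi)-K\varphi$ is bounded above by $|K|$ there, so $L_1$ is finite and $\lim_{\varphi\to 1^-}z(\varphi)=L_1+K\in(-\infty,0]$.

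The substantive step is ruling out $L_0=-\infty$, which would correspond to $z(\varphi)\to -\infty$ as $\varphi\to 0^+$. I would proceed by contradiction, bootstrapping off the equation: if $z(\varphi)\to-\infty$ then for every $N>0$ there exists $\delta_N>0$ such that $z(\varphi)<-N$ on $(0,\delta_N)$; consequently $|q(\varphi)/z(\varphi)|\le (\max_{[0,1]}q)/N$, and so $|\dot z|$ is bounded on $(0,\delta_N)$ by a constant depending only on $\max|h|$, $|c|$, $\max q$ and $N$. But this forces $z$ to be Lipschitz, hence bounded, on $(0,\delta_N)$, contradicting $z(\varphi)\to-\infty$. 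Hence $L_0$ is finite, and since $K\varphi\to 0$ we conclude $\lim_{\varphi\to 0^+}z(\varphi)=L_0\in(-\infty,0]$.

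Setting $z(0):=L_0$ and $z(1):=L_1+K$ then provides the required continuous extension of $z$ to $[0,1]$. The only nontrivial point in this plan is the self-referential bootstrap at $\varphi=0$; once the lower bound on $\dot z$ is in place, the existence of both one-sided limits is essentially free from monotonicity, and the upper limit is trivially finite because $z$ is already bounded above by $0$.
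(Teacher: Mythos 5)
Your proof is correct and follows essentially the same route as the paper: both extract a monotone quantity from the sign condition $-q/z>0$ (you use $\varphi\mapsto z(\varphi)-K\varphi$ nondecreasing, the paper integrates the equation and observes that $\phi\mapsto\int_\phi^{\phi_1}q/z$ is monotone) to get existence of one-sided limits, and both rule out $z(0^+)=-\infty$ by the identical bootstrap: if $z\to-\infty$ near $0$, then $q/z$ is bounded there, forcing $\dot z$ to be bounded and hence $z$ to remain bounded, a contradiction. The treatment of $\varphi=1$ is likewise equivalent, using $z<0$ for the finite upper bound.
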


\begin{proof}
Since $q/z < 0$ in $(0,1)$, then for any $0<\phi < \phi_1 < 1$ the function
\begin{equation*}
\phi \to
\int_{\phi}^{\phi_1} \frac{q(\sigma)}{z(\sigma)}\,d\sigma
\end{equation*}
is strictly increasing. Hence, we can pass to the limit as $\phi \to 0^+$ in the expression
\begin{equation}
\label{e:zlimit}
z(\phi) = z(\phi_1) - \int_{\phi}^{\phi_1} \left(h(\sigma)-c\right)\,d\sigma + \int_{\phi}^{\phi_1} \frac{q(\sigma)}{z(\sigma)}\,d\sigma,
\end{equation}
which is obtained by integrating $\eqref{first order problem0}_1$ in $(\phi,\phi_1)$. Then $z(0^+)$ exists and necessarily lies in $[-\infty, 0]$ because of $\eqref{first order problem0}_2$. If $z(0^+)=-\infty$, then by passing to the limit for $\phi \to 0^+$ in \eqref{e:zlimit} we find a contradiction, since the last integral converges as $\phi \to 0^+$. Hence, $z(0^+)\in (-\infty, 0]$.
\par
For $z(1^-)$ the proof is even simpler: by integrating $\eqref{first order problem0}_1$ in $(\phi_2,\phi)$, for $0<\phi_2<\phi<1$, we obtain \eqref{e:zlimit} with $\phi_2$ replacing $\phi_1$. As before, we deduce that $z(1^-)$ exists. Also, since the last integral in \eqref{e:zlimit} is now positive, we get $z(\phi) > z(\phi_2) + \int_{\phi_2}^{\phi} \left(h(\sigma) - c\right) \,d\sigma$, for any $\phi\in(\phi_2,1)$. This directly rules out the alternative $z(1^-)=-\infty$ and concludes the proof.
\end{proof}

We now summarize \cite[Lemmas 4.1 and 4.3]{CM-DPDE} in a version for our purposes, by also exploiting Lemma \ref{lem:zlimit}. These tools were obtained in \cite{CM-DPDE} under stricter assumptions on $q$, but it is easy to verify that they also apply to the current case, in virtue of \eqref{e:q}. For $\mu<0$ and $\sigma\in(0,1]$ or $\sigma\in[0,1)$, they deal with the systems
\begin{equation}
	\label{e:sol from sigma}
	\begin{cases}
	\dot{z}(\phi)=h(\phi)-c-\frac{q(\phi)}{z(\phi)}, \ \phi < \sigma,\\
	z(\sigma)=\mu,
	\end{cases}
	\quad
	\begin{cases}
	\dot{z}(\phi)=h(\phi)-c-\frac{q(\phi)}{z(\phi)}, \ \phi > \sigma,\\
	z(\sigma)=\mu.
	\end{cases}
	\end{equation}
A function $\eta\in C^1(\sigma_1,\sigma_2)$, for $0\le \sigma_1 < \sigma_2 \le 1$, is an upper-solution of $\eqref{first order problem0}_1$ in $(\sigma_1,\sigma_2)$ if
\begin{equation}
\label{e:upper-sol}
\dot{\eta}(\phi) \ge  h(\phi) - c -\frac{q(\phi)}{\eta(\phi)} \ \mbox{ for any }  \ \sigma_1 < \phi < \sigma_2.
\end{equation}
The upper-solution $\eta$ is said strict if the inequality in \eqref{e:upper-sol} is strict. A function $\omega \in C^1(\sigma_1,\sigma_2)$ is a (strict) lower-solution of $\eqref{first order problem0}_1$ in $(\sigma_1,\sigma_2)$ if the (strict) inequality in \eqref{e:upper-sol} is reversed.

\begin{lemma}
\label{lem:cm-dpde}
Assume \eqref{e:q} and consider equation $\eqref{first order problem0}_1$; the following results hold.
\begin{enumerate}
\item
Set $\mu<0$. Then,
	{
	\begin{enumerate}
	\item let $\sigma\in (0,1]$; then problem $\eqref{e:sol from sigma}_1$
		admits a unique solution $z\in C^0[0,\sigma]\cap C^1(0,\sigma)$;
	
	 \item let $\sigma \in [0,1)$; then problem $\eqref{e:sol from sigma}_2$
		admits a unique solution $z\in C^0[\sigma,\delta]\cap C^1(\sigma,\delta)$, for some maximal $\sigma < \delta \le 1$. Moreover, either $\delta=1$ or $z(\delta)=0$.
	\end{enumerate}
	}
\item Set $0\le \sigma_1 < \sigma_2 \le 1$; let $z$ be a solution of \eqref{first order problem0} in $(\sigma_1,\sigma_2)$. It holds that:
	{
	\begin{enumerate}
	\item
	if $\eta$ is a strict upper-solution of $\eqref{first order problem0}_1$ in $(\sigma_1,\sigma_2)$, then
	\begin{enumerate}[(i)]
	
	\item if $\eta(\sigma_2) \le z(\sigma_2)<0$, then $\eta < z$ in $(\sigma_1,\sigma_2)$;
	
	\item if $0>\eta (\sigma_1) \ge z(\sigma_1)$ then $\eta > z$ in $(\sigma_1,\sigma_2)$; moreover, if $\eta$ is defined in $[0,1]$, then $z$ must be defined in $[\sigma_1,1]$ and $\eta > z$ in $(\sigma_1,1)$;
	\end{enumerate}
	\item
	if $\omega$ is a strict lower-solution of $\eqref{first order problem0}_1$ in $(\sigma_1,\sigma_2)$, then
	\begin{enumerate}[(i)]
	
	\item if $0>\omega(\sigma_2) \ge z(\sigma_2)$, then $\omega > z$ in $(\sigma_1,\sigma_2)$; moreover, if $\omega$ is defined in $[0,1]$, then $z$ must be defined in $[0,\sigma_2]$ and $\omega > z$ in $(0,\sigma_2)$;
	
	\item if $\omega (\sigma_1) \le z(\sigma_1)<0$ then $\omega < z$ in $(\sigma_1,\sigma_2)$.
	\end{enumerate}
	\end{enumerate}
	}
\end{enumerate}
\end{lemma}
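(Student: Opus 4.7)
The plan is to treat the ODE in \eqref{first order problem0} as a first-order equation whose right-hand side $F(\phi, z) := h(\phi) - c - q(\phi)/z$ is continuous on $[0,1] \times (-\infty, 0)$ and locally Lipschitz in $z$. Standard Cauchy--Lipschitz then gives a unique local $C^1$ solution through any initial point $(\sigma, \mu)$, and the substance of part 1 is to describe its maximal interval of existence.

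For part 1(a), I would extend the local solution leftward from $\sigma$ as long as $z$ remains in $(-\infty, 0)$ and rule out the two nontrivial exit scenarios on $(0, \sigma)$. Finite-$\phi$ blow-up $z \to -\infty$ is impossible because $q/z \to 0$ keeps $\dot z$ bounded on such a branch. Reaching $z \to 0^-$ at some interior $\phi_0 \in (0, \sigma)$, where necessarily $q(\phi_0) > 0$, is also impossible: the ODE forces $\dot z \to +\infty$ via the term $-q/z$, hence $\dot z > 0$ on some right-neighborhood $(\phi_0, \phi_0 + \delta)$, making $z$ strictly increase past its boundary value $0$ and turn positive, contradicting $z < 0$. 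Hence the maximal leftward extension reaches $\phi = 0$, and Lemma \ref{lem:zlimit} supplies the continuous extension to that endpoint. Part 1(b) follows the same template forward, but with the crucial difference that at an interior $\phi_0 > \sigma$ where $z \to 0^-$ as $\phi \to \phi_0^-$ the sign analysis is self-consistent: $\dot z \to +\infty$ matches $z$ rising to $0$ from below as $\phi$ increases. Thus the maximal interval is $[\sigma, \delta]$ with $\delta \le 1$, and the dichotomy ``$\delta = 1$ or $z(\delta) = 0$'' exhausts the only two ways the forward extension can stop.

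For part 2, I would run the classical comparison-by-contradiction argument founded on the contact-point identity: at any $\phi_0$ where $\eta(\phi_0) = z(\phi_0) < 0$, the $-q/\eta$ and $-q/z$ terms coincide, so the strict upper-solution inequality collapses to
\[
\dot\eta(\phi_0) > \dot z(\phi_0),
\]
with the reversed strict inequality at a contact with a strict lower-solution $\omega$. Each of the four sub-cases then reduces to the standard infimum/supremum trick. For instance, in (a)(i) set $\phi_0 := \inf\{\phi \in [\sigma_1, \sigma_2] : \eta < z \text{ on } (\phi, \sigma_2)\}$; if $\phi_0 > \sigma_1$, continuity forces $\eta(\phi_0) = z(\phi_0)$, while $\eta < z$ immediately to the right of $\phi_0$ forces $\dot\eta(\phi_0) \le \dot z(\phi_0)$, contradicting the displayed strict inequality. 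Cases (a)(ii), (b)(i) and (b)(ii) are handled by the symmetric variants, running the infimum/supremum from the opposite endpoint and/or reversing the inequalities. The global-extension claims (``if $\eta$ is defined on $[0,1]$, then $z$ is too'') follow because in those sub-cases $z$ is trapped strictly below a continuous, strictly negative barrier defined on $[0,1]$; hence $|z|$ is uniformly bounded away from $0$ by compactness, and the exit-scenario analysis of part 1 then carries $z$ all the way to the endpoint.

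The genuinely delicate point is the sign-asymmetry analysis in part 1 that distinguishes the backward case (a) from the forward case (b); once it is in place, the comparison arguments of part 2 are routine modulo careful bookkeeping when the contact lands at an endpoint of $(\sigma_1, \sigma_2)$ or at the boundary $\{0, 1\}$.
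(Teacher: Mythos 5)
The paper does not prove this lemma; it is quoted with the remark ``We now summarize \cite[Lemmas 4.1 and 4.3]{CM-DPDE}\dots These tools were obtained in \cite{CM-DPDE} under stricter assumptions on $q$, but it is easy to verify that they also apply to the current case.'' So there is no in-paper proof to compare against, and your proposal must be judged on its own. Your overall strategy --- Cauchy--Lipschitz off the singular set $z=0$, exit-scenario analysis on the maximal interval, and contact-point comparison --- is the standard and correct route for both parts, and the sign-asymmetry you highlight between the backward case (an interior zero of $z$ is impossible when one integrates leftward) and the forward case (an interior zero is self-consistent and gives the dichotomy $\delta=1$ or $z(\delta)=0$) is exactly the right observation.

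There is, however, one spot where the sketch elides a real issue. In Part~2 the ``first-contact'' infimum/supremum argument works as written whenever the contact point lands in the \emph{open} interval $(\sigma_1,\sigma_2)$, because there the strict upper/lower-solution inequality gives $\dot\eta(\phi_0)>\dot z(\phi_0)$ (resp.\ $<$) while the geometry of the contact forces the opposite weak inequality. But when the boundary comparison is an equality --- e.g.\ in case~(a)(ii) with $\eta(\sigma_1)=z(\sigma_1)<0$, or in (a)(i) with $\eta(\sigma_2)=z(\sigma_2)$ --- the would-be contact sits at an endpoint where the strict differential inequality is not assumed. Your ``symmetric variant'' remark does not resolve this. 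The clean fix is a Gronwall-type step for $w:=\eta-z$: from
\[
\dot w(\phi) \;>\; \frac{q(\phi)}{\eta(\phi)z(\phi)}\,w(\phi) \;=:\; K(\phi)\,w(\phi),\qquad K>0 \hbox{ on } (\sigma_1,\sigma_2),
\]
and the fact that $K$ is bounded near the endpoint in question (precisely because $\eta$ and $z$ are \emph{strictly} negative there by hypothesis, which makes $\int K$ convergent), one sees that $w(\phi)\,e^{-\int K}$ is strictly increasing, with limit $0$ at the endpoint; this gives $w>0$ (or $w<0$) on the open interval and removes the endpoint ambiguity. A related point: in the global-extension claims you say $|z|$ is ``uniformly bounded away from $0$ by compactness'', but that fails if the barrier itself vanishes at $0$ or $1$; what one actually needs is that $z$ is trapped strictly on one side of a barrier that is negative on the \emph{interior}, so $z$ cannot vanish at any interior point, and Lemma~\ref{lem:zlimit} then supplies the continuous extension to the boundary. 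With these refinements inserted, the proof is complete and matches the standard argument the authors cite.
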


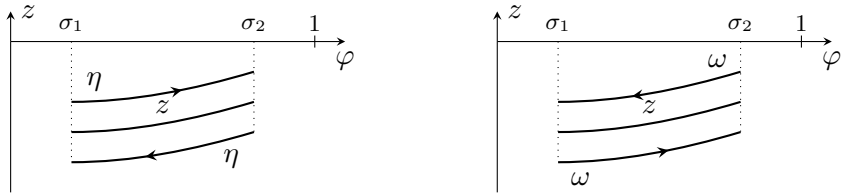
\begin{figure}[htb]
\begin{center}
\begin{tikzpicture}[>=stealth, scale=0.8]

\draw[->] (0,0) --  (5.5,0) node[below]{$\phi$} coordinate (x axis);
\draw[->] (0,0) -- (0,0.5) node[right]{$z$} coordinate (y axis);
\draw (0,0) -- (0,-2.5);
\draw (5,0.1) -- (5,-0.1);
\draw (5,0)  node[above]{\footnotesize{$1$}};
\draw[dotted] (1,0) node[above]{\footnotesize{$\sigma_1$}}-- (1,-2);
\draw[dotted] (4,0) node[above]{\footnotesize{$\sigma_2$}}-- (4,-1.5);

\draw[thick] (1,-1.5) .. controls (2,-1.5) and (3,-1.3) .. node[midway, above]{$z$} (4,-1) ; 

\path[draw=black,thick,postaction={on each segment={mid arrow=black,thick}}]
(1,-1) .. controls (2,-1) and (3,-0.8) .. node[very near start, above]{$\eta$} (4,-0.5); 

\path[draw=black,thick,postaction={on each segment={mid arrow=black,thick}}]
(4,-1.5) .. controls (3,-1.8) and (2,-2) .. node[very near start, below]{$\eta$} (1,-2)  ; 

\begin{scope}[shift ={(8,0)}]
\draw[->] (0,0) --  (5.5,0) node[below]{$\phi$} coordinate (x axis);
\draw[->] (0,0) -- (0,0.5) node[right]{$z$} coordinate (y axis);
\draw (0,0) -- (0,-2.5);
\draw (5,0.1) -- (5,-0.1);
\draw (5,0)  node[above]{\footnotesize{$1$}};
\draw[dotted] (1,0) node[above]{\footnotesize{$\sigma_1$}}-- (1,-2);
\draw[dotted] (4,0) node[above]{\footnotesize{$\sigma_2$}}-- (4,-1.5);

\draw[thick] (1,-1.5) .. controls (2,-1.5) and (3,-1.3) .. node[midway, above]{$z$} (4,-1) ; 

\path[draw=black,thick,postaction={on each segment={mid arrow=black,thick}}]
(4,-0.5) .. controls (3,-0.8) and (2,-1) .. node[very near start, above]{$\omega$} (1,-1); 

\path[draw=black,thick,postaction={on each segment={mid arrow=black,thick}}]
(1,-2) .. controls (2,-2) and (3,-1.8) .. node[very near start, below]{$\omega$} (4,-1.5)  ; 
\end{scope}
\end{tikzpicture}
\end{center}

\caption{\label{f:sup-sub}{An illustration of Lemma \ref{lem:cm-dpde} {\em (2)}. Left: supersolutions $\eta$; right: subsolutions $\omega$.}}
\end{figure}

In the context of equations as $\eqref{first order problem0}_1$, proper limit arguments are often needed.

\begin{lemma}
\label{lem:convergence}
Assume \eqref{e:q}. Let $\{c_n\}_n$ be a sequence of real numbers and $c\in\R$ such that $c_n\to c$ as $n \to \infty$.
Let $z_n\in C^0[0,1]\cap C^1(0,1)$ satisfy \eqref{first order problem0} corresponding to $c_n$. If $\{z_n\}_n$ is increasing and there exists $v\in C^0[0,1]$ such that
\begin{equation}
\label{e:bound}
 z_n(\phi) \le v(\phi) < 0\ \mbox{ for any } \ n\in \N \ \mbox{ and } \  \phi \in (0,1),
\end{equation}
then $z_n$ converges (uniformly on $[0,1]$) to a solution $\bar{z} \in C^0[0,1]\cap C^1(0,1)$ of \eqref{first order problem0}.
\par
The same conclusion holds if $\{z_n\}_n$ is decreasing and there exists $w\in C^0[0,1]$ such that
$$
z_n(\phi)\ge w(\phi) \ \mbox{ for any } \ n\in\N \ \mbox{ and } \ \phi \in (0,1).
$$
\end{lemma}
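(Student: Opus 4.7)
\emph{Proof plan.} My plan proceeds in three stages: extract a pointwise limit, promote it to uniform convergence on $[0,1]$, and verify that the limit solves the ODE. I treat the increasing case; the decreasing case is completely symmetric, with $z_1$ playing the role of the (now upper) barrier, since $z_n\le z_1<0$ on $(0,1)$, and $w$ playing the role of the lower envelope.

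Pointwise convergence on $[0,1]$ is immediate, since $\{z_n(\phi)\}$ is increasing in $n$, bounded above by $v(\phi)<0$ on $(0,1)$, and bounded below by $z_1(\phi)$, which is finite on $[0,1]$ by Lemma~\ref{lem:zlimit}. Hence $\bar z(\phi):=\lim_n z_n(\phi)$ exists for every $\phi\in[0,1]$, with $z_1\le\bar z\le v<0$ on $(0,1)$, so that $q/\bar z$ is well defined in the interior. To upgrade to \emph{local} uniform convergence in $(0,1)$, I will work on an arbitrary compact $[a,b]\subset(0,1)$: the bound $z_n\le v<0$ together with the continuity of $h$ and $q$ forces a uniform $L^\infty$ bound on $\dot z_n$ there, so $\{z_n\}$ is equi-Lipschitz on $[a,b]$; Arzel\`a--Ascoli combined with monotonicity yields uniform convergence on $[a,b]$, whence $\bar z\in C^0(0,1)$. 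Integrating the ODE as in the proof of Lemma~\ref{lem:zlimit} gives, for $0<\phi<\phi_1<1$,
\begin{equation*}
z_n(\phi)=z_n(\phi_1)-\int_\phi^{\phi_1}\bigl(h(\sigma)-c_n\bigr)\,d\sigma+\int_\phi^{\phi_1}\frac{q(\sigma)}{z_n(\sigma)}\,d\sigma,
\end{equation*}
and I would pass to the limit using $c_n\to c$ and monotone convergence on the sign-definite last integrand (note that $1/z_n$ decreases pointwise in $n$, since $1/x$ is decreasing on $(-\infty,0)$), obtaining the analogous identity for $\bar z$ with $c$ in place of $c_n$.

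The crux is continuity of $\bar z$ at $\phi=0$, and symmetrically at $\phi=1$. Applying the integral identity above to each $z_n$ down to $\phi=0$---which is legal because $z_n\in C^0[0,1]$---and using the $n$-uniform bounds $z_n(0),z_n(\phi_1)\in[z_1(0),0]$, I would show that $\int_0^{\phi_1} q(\sigma)/z_n(\sigma)\,d\sigma$ is uniformly bounded in $n$; since the integrands are nonpositive and decrease pointwise to $q/\bar z$, monotone convergence gives $\int_0^{\phi_1} q/\bar z\,d\sigma\in\R$ and extends the limit identity down to $\phi=0$. Letting $\phi\to 0^+$ in that identity then forces $\bar z(\phi)\to\bar z(0)$. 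Once $\bar z\in C^0[0,1]$ is in hand, Dini's theorem, applied to the monotone sequence of continuous $z_n$ converging pointwise to the continuous $\bar z$ on the compact $[0,1]$, promotes the convergence to uniform on the full interval. Finally, from the limit integral equation, combined with the continuity of $\bar z$ and the strict inequality $\bar z<0$ on $(0,1)$, the integrand $h-c-q/\bar z$ is continuous on $(0,1)$, and the fundamental theorem of calculus yields $\bar z\in C^1(0,1)$ with $\dot{\bar z}=h-c-q/\bar z$, completing the verification that $\bar z$ solves \eqref{first order problem0}.

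The step I expect to be the main obstacle is the endpoint continuity: because $v$ is only required to be negative on the \emph{open} interval and may in principle vanish at $0$ or $1$, the naive pointwise bound $1/|\bar z|\le 1/|v|$ is not a priori integrable near the endpoints, so $q/\bar z$ cannot be dominated by an integrable majorant on a fixed neighborhood of $0$. The integrability of $q/\bar z$ near $0$ has to be extracted \emph{a posteriori} from the $n$-uniform control on $\int_0^{\phi_1}q/z_n$ provided by the integral identity itself. This is the one place in the argument where the specific structure of the ODE, rather than a direct comparison with $v$, must be exploited.
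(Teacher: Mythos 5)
Your proof is correct, and for the one step that is actually delicate---the endpoint continuity $\bar z(0^+)=\lim_n z_n(0)$ (and symmetrically at $1$)---you take a genuinely different route from the paper. The paper establishes this via comparison: it constructs an explicit upper-solution $\eta$ of the ODE for a nearby speed $\delta$ with $\eta(0)=\mu$, invokes Lemma~\ref{lem:cm-dpde} to sandwich $z_n$ below $\eta$, and then passes to the limit; moreover, the paper explicitly flags the decreasing case as ``more subtle'' and handles the subcases $z_n(0)=0$ and $z_n(1)=0$ with separate ad hoc devices (a Mean Value Theorem argument and a comparison with $\eta_2$ solving \eqref{e:problem mu2}, respectively). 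You instead push the integral identity all the way to $\phi=0$: since $z_n\in C^0[0,1]$ and $q/z_n$ is sign-definite, the identity gives an $n$-uniform bound on $\int_0^{\phi_1}|q/z_n|$ (because $z_n(0)$, $z_n(\phi_1)$, $c_n$ are all uniformly bounded), and the monotone convergence theorem then delivers both $\int_0^{\phi_1}q/\bar z\in\R$ and the passage to the limit in the identity at $\phi=0$; letting $\phi\to0^+$ in the interior identity for $\bar z$ recovers $\bar z(0^+)=\bar z(0)$. This genuinely makes the increasing and decreasing cases symmetric (only the direction of the MCT application changes---in the decreasing case one uses $\int_0^{\phi_1}|q/z_1|<\infty$, guaranteed by Lemma~\ref{lem:zlimit}, as the integrable dominating function), so you avoid the case-splitting the paper performs. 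Both proofs then conclude with Dini's theorem. Two minor remarks: your intermediate Arzel\`a--Ascoli step on compacts $[a,b]\subset(0,1)$ is harmless but superfluous, since Dini on $[0,1]$ already subsumes it once $\bar z\in C^0[0,1]$ is in hand; and the uniform-bound interval you quote, $z_n(0),z_n(\phi_1)\in[z_1(0),0]$, should really be two intervals (at $\phi_1$ the bounds are $[z_1(\phi_1),v(\phi_1)]$), though this does not affect the argument.
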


\begin{proof}
Take first $\{z_n\}_n$ increasing. From  \eqref{e:bound}, we can define $\bar{z}=\bar{z}(\phi)$ as
$$
\lim_{n\to \infty} z_n(\phi) =: \bar{z}(\phi), \ \phi \in(0,1).
$$
It is obvious that $z_1\leq \bar{z} \leq v<0$ in $(0,1)$.
By integrating $\eqref{first order problem0}_1$, we have
$$
z_n(\phi)-z_n(\phi_0) = \int_{\phi_0}^{\phi} \left\{h(\sigma)-c_n + \frac{q(\sigma)}{-z_n(\sigma)}\right\}\,d\sigma \ \mbox{ for any } \ \phi_0,\phi \in (0,1).
$$
Since, for every $\sigma \in (0,1)$, the sequence $\left\{q(\sigma)/(-z_n(\sigma))\right\}_n$
is increasing, then the Monotone Convergence Theorem implies that
$$
\bar{z}(\phi)-\bar{z}(\phi_0) = \int_{\phi_0}^{\phi} \left\{h(\sigma)-c -\frac{q(\sigma)}{\bar{z}(\sigma)}\right\}\,d\sigma  \ \mbox{ for any } \ \phi_0,\phi \in (0,1),
$$
where all the involved quantities are finite. This tells us that $\bar{z}$ is absolutely continuous in every compact interval $[a,b]\subset (0,1)$. By differentiating, we then obtain that $\bar{z}\in C^1(0,1)$ satisfies \eqref{first order problem0}. From Lemma \ref{lem:zlimit}, we also have that $\bar{z}\in C^0[0,1]$. To conclude that $z_n$ converges to $\bar{z}$ uniformly on $[0,1]$, it only remains to prove that
\begin{equation}
\label{e:estr conv2}
\bar{z}(0^+)=\lim_{n\to \infty}z_n(0) \ \mbox{ and } \ \bar{z}(1^-)=\lim_{n\to \infty}z_n(1).
\end{equation}
Indeed, if \eqref{e:estr conv2} holds, then $\{z_n\}_n$ turns out to be a monotone sequence of continuous functions converging pointwise to $\bar{z} \in C^0[0,1]$ on a compact set. Then, by {\em Dini\rq{}s monotone convergence theorem} (see \cite[Theorem 7.13]{Rudinbook}), $z_n$ must converge uniformly to $\bar{z}$ on $[0,1]$. We prove only $\eqref{e:estr conv2}_1$ since $\eqref{e:estr conv2}_2$ follows as well. If $z_n (0)\to 0$, as $n\to \infty$, then $\bar{z}(0^+)=0$, because $z_n \le \bar{z} <0$ in $(0,1)$. Hence $\eqref{e:estr conv2}_1$ is verified. If instead  $z_n(0) \to \mu <0$, we argue as follows. Consider $\delta\in \R$ such that $c_n > \delta$, for any $n\in \N$, and let $\eta=\eta(\phi)$ satisfy
\begin{equation}
\label{e:problem mu}
\begin{cases}
\dot{\eta}(\phi)=h(\phi)-\delta-\frac{q(\phi)}{\eta(\phi)},\ \phi > 0,\\
\eta(0)=\mu.
\end{cases}
\end{equation}
By Lemma \ref{lem:cm-dpde} {\em (1.b)} such an $\eta$ exists, in its maximal-existence interval $[0,\sigma)$, for some $\sigma \in (0,1]$. Moreover, we have
$$
\dot{\eta}(\phi)> h(\phi)-c_n -\frac{q(\phi)}{\eta(\phi)}, \ \phi \in (0,\sigma).
$$
Hence, in $(0,\sigma)$, $\eta$ is a strict upper-solution of $\eqref{first order problem0}_1$ with $c=c_n$ and $z_n(0)\le \eta(0)<0$. Thus, Lemma \ref{lem:cm-dpde} {\em (2.a.ii)} implies that $z_n \le \eta$ in $(0,\sigma)$. By passing to the pointwise limit, for $n\to \infty$, it is clear that $\bar{z} \le \eta$ in $(0,\sigma)$. Since $\bar{z}, \eta$ are continuous up to $\phi =0$, then $\bar{z}(0^+) \le \mu$. On the other hand we have $\bar{z}(0^+) \ge \mu$ because $z_n \le \bar{z}$ in $(0,1)$ and $z_n, \bar{z} \in C^0[0,1]$. Then $\bar z(0^+)=\mu$ and this concludes the proof of $\eqref{e:estr conv2}_1$.
\par
Consider $\{z_n\}_n$ decreasing. By adapting the arguments used in the first part of this proof, we can show that $z_n$ converges pointwise in $(0,1)$ to $\bar{z} \in C^0[0,1]\cap C^1(0,1)$ satisfying \eqref{first order problem0}. As before we need \eqref{e:estr conv2} to conclude. To this end, we again observe that similarly to the case of $\{z_n\}_n$ increasing, we have \eqref{e:estr conv2} if both $z_n(0) \to \mu<0$ and $z_n(1) \to \nu <0$. Instead, the proofs of either $\eqref{e:estr conv2}_1$ when $z_n(0)\to 0$ and $\eqref{e:estr conv2}_2$ when $z_n(0) \to 0$ are now more subtle. We provide them both. First, since $z_n <0$ in $(0,1)$, observe that requiring that $z_n(0) \to 0$ (or $z_n(1)\to 0$) corresponds to have $z_n(0)=0$ (or $z_n(1)=0$), for every $n\in\N$.
\par
Take $z_n(0)=0$, for $n\in \N$. Let $n\in \N$ and for $\phi \in (0,1)$, let $\sigma_\phi \in (0,\phi)$ be defined by
$$
\dot{z}_n(\sigma_\phi)=\frac{z_n(\phi)}{\phi}.
$$
Take $\delta_1\in\R$ such that $\delta_1 >c_n$, for each $n\in\N$. By using $\eqref{first order problem0}_1$ and the fact that $q/z_n<0$ in $(0,1)$, we deduce, for any $\phi \in (0,1)$,
\begin{equation}\label{e:zC}
\frac{z_n(\phi)}{\phi}=\dot{z}_n(\sigma_\phi) > h(\sigma_\phi)-c_n > \inf_{\phi \in (0,1)} h(\phi)-\delta_1 =: C <0.
\end{equation}
The sign of $C$ is due to $c_n \ge h(0)$, for $n\in \N$; otherwise, it would not be possible to have $z_n$ satisfying \eqref{first order problem0} and $z_n(0)=0$.
Inequality \eqref{e:zC} implies that $z_n(\phi) > C \phi$ for $\phi \in (0,1)$. Hence, letting $n\to \infty$, this leads to $\bar{z}(\phi) \ge C \phi$, for $\phi \in (0,1)$. Passing to the limit as $\phi \to 0^+$ gives $\bar{z}(0^+) \ge 0$, which in turn implies that $\bar{z}(0^+)=0$. Thus, $\eqref{e:estr conv2}_1$ is verified.
\par
Lastly, let $z_n(1)=0$, for any $n\in \N$. Fix $\epsilon >0$ and consider $\eta_2=\eta_2(\phi)$ such that
\begin{equation}
\label{e:problem mu2}
\begin{cases}
\dot{\eta}_2(\phi)=h(\phi)-\delta-\frac{q(\phi)}{\eta_2(\phi)},\ \phi > 0,\\
\eta_2(1)=-\epsilon <0,
\end{cases}
\end{equation}
where $\delta\in \R$ is such that $\delta < c_n$, for any $n\in \N$. Such an $\eta_2$ exists and is defined and continuous in $[0,1]$, because of Lemma \ref{lem:cm-dpde} {\em (1.a)} and Lemma \ref{lem:zlimit}. Take an arbitrary $n\in\N$. From $0=z_n(1)>\eta_2(1)$, it follows that $\eta_2 < z_n$ in $[{\sigma}_n, 1]$, for some ${\sigma}_n>0$, with $z_n ({\sigma}_n)<0$. Thus, since
$$
\dot{\eta_2}(\phi)>h(\phi)-c_n -\frac{q(\phi)}{\eta_2(\phi)}, \ \phi \in (0,1),
$$
then $\eta_2$ is a strict upper-solution of $\eqref{first order problem0}_1$ with $c=c_n$ in $(0,{\sigma}_n)$ and $\eta_2(\sigma_n) < z_n(\sigma_n) < 0$. An application of Lemma \ref{lem:cm-dpde} {\em (2.a.i)} implies that $\eta_2<z_n$ in $(0,{\sigma}_n)$. Thus, $z_n > \eta_2$ in $(0,1)$, for any $n\in\N$. By passing to the pointwise limit, as $n\to \infty$, we then have $\bar{z}(\phi) \ge \eta_2(\phi)$, for $\phi \in (0,1)$. By the continuity of both $\bar{z}$ and $\eta_2$ at $\phi = 1$, we obtain $0\ge \bar{z}(1^-) \ge -\epsilon$. Since $\epsilon>0$ is arbitrary, we deduce that necessarily $\bar{z}(1^-)=0$.
\end{proof}

Because of Lemmas \ref{lem:zlimit} and \ref{lem:convergence}, in the following we always mean solutions $z$ to problem \eqref{first order problem0}, and analogous ones, in the class $C[0,1] \cap C^1(0,1)$, {\em without any further mention}.

Motivated by Lemma \ref{lem:zlimit}, in the next sections we focus the following problem, where the boundary condition is given on the {\em left} extremum of the interval of definition:
\begin{equation}
\label{first order problem}
\begin{cases}
\dot{z}(\varphi)=h(\varphi)-c-\frac{q(\varphi)}{z(\varphi)}, \ \varphi\in (0,1),\\
z(\varphi) < 0 , \ \varphi \in (0,1),\\
z(0)=0.
\end{cases}
\end{equation}
Problem \eqref{first order problem} is exploited for {\em semi-wavefronts}. The value of $z(1)$ is not prescribed; from $\eqref{first order problem}_2$, we have $z(1) \le  0$. The extremal case $z(1)=0$ is needed in the study of {\em wavefronts}:
\begin{equation}
\label{first order problem 0-0}
\begin{cases}
\dot{z}(\varphi)=h(\varphi)-c-\frac{q(\varphi)}{z(\varphi)}, \ \varphi\in (0,1),\\
z(\varphi) < 0 , \ \varphi \in (0,1),\\
z(0)=z(1)=0.
\end{cases}
\end{equation}

\section{The singular problem with two boundary conditions}\label{s:sing2}
\setcounter{equation}{0}
Problems \eqref{first order problem} and \eqref{first order problem 0-0} have solutions only when $c$ is larger than a critical threshold $c^*$. In this section we first give a new estimate to $c^*$ under mild conditions on $q$; then, we obtain a result of existence and uniqueness of solutions to \eqref{first order problem 0-0} if $c\ge c^*$.
Recalling (D1), (g0) and \eqref{Dg} and (D0)-(g01), throughout the next sections we need to strengthen the assumptions \eqref{e:q} of Section \ref{sec:first order 1}; for commodity we gather them all here below. We assume
\begin{itemize}
\item[{(q)}] $q \in C^0[0,1]$,  $q>0$ in $(0,1)$, $q(0)=q(1)=0$, and $\ds\limsup_{\phi \to 0^+}\frac{q(\phi)}{\phi} < +\infty$.
\end{itemize}
\noindent


\smallskip
We improve, as in \cite[Theorem 3.1]{Marcelli-Papalini}, a well-known result \cite{Aronson-Weinberger, GK, Malaguti-Marcelli_2002}. If $q$ is differentiable at $0$, in
\cite[Theorem 3.1]{Marcelli-Papalini} it is proved that Problem \eqref{first order problem} has a solution if
\begin{equation}
\label{e:c*MP}
c>\sup_{\phi\in(0,1]} \frac{f(\phi)}{\phi} + 2 \sqrt{\sup_{\phi \in (0,1]} \frac1{\phi}\int_{0}^{\phi}\frac{q(\sigma)}{\sigma}\,d\sigma}.
\end{equation}
The last assumption in (q) is weaker than the differentiability of $q$ at $0$ and our result below is less stronger than the one in \cite{Marcelli-Papalini}. It is an open problem whether the existence of solutions to Problem \eqref{first order problem 0-0} under \eqref{e:c*MP} can be achieved by only assuming $\limsup_{\phi \to 0^+}q(\phi)/\phi < +\infty$.

\begin{lemma}
\label{lem:new above est c*}
Assume {\em (q)}. Then Problem \eqref{first order problem 0-0} admits a solution if
\begin{equation}
\label{e:new est c*}
c > \sup_{\phi \in (0,1]} \frac{f(\phi)}{\phi} + 2 \sqrt{\sup_{\phi \in (0,1]} \frac{q(\phi)}{\phi}}.
\end{equation}
\end{lemma}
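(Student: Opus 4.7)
Set $A := \sup_{\phi \in (0,1]} f(\phi)/\phi$ and $B := \sup_{\phi \in (0,1]} q(\phi)/\phi$; both are finite by {\em (f)} and {\em (q)}. Since $c > A + 2\sqrt{B}$, I can pick $\beta > 0$ with $c - A > \beta + B/\beta$ (the choice $\beta = \sqrt{B}$ is convenient). Define
\begin{equation*}
\omega(\phi) := f(\phi) - c\phi, \qquad \eta(\phi) := f(\phi) - (A + \beta)\phi.
\end{equation*}
Both functions are of class $C^1[0,1]$, vanish at $\phi=0$, and satisfy $\omega(\phi) < \eta(\phi) < 0$ on $(0,1]$ since $f(\phi)/\phi \le A$ and $c > A + \beta$. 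The condition $\dot\omega < h - c - q/\omega$ characterizing a strict lower-solution reduces to $0 < q(\phi)/(-\omega(\phi))$, which is immediate from $\omega < 0$ and $q > 0$. Analogously, $\dot\eta > h - c - q/\eta$ rewrites as $c - A - \beta > q(\phi)/(-\eta(\phi))$; using $-\eta(\phi) = (A+\beta)\phi - f(\phi) \ge \beta\phi$ together with $q(\phi)/\phi \le B$, the right-hand side is dominated by $B/\beta$, and the choice of $\beta$ closes the estimate.

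The core of the argument is a backward shooting. For each integer $n \ge 1/\beta$, Lemma~\ref{lem:cm-dpde}\,{\em (1.a)} supplies a unique $z_n \in C^0[0,1] \cap C^1(0,1)$ solving $\dot z = h - c - q/z$ on $(0,1)$ with $z_n < 0$ and terminal datum $z_n(1) = -1/n$. Since $\eta(1) \le -\beta \le -1/n = z_n(1) < 0$, Lemma~\ref{lem:cm-dpde}\,{\em (2.a.i)} yields $\eta < z_n$ on $(0,1)$; letting $\phi \to 0^+$ in this strict inequality forces $z_n(0) \ge \eta(0) = 0$, and combined with $z_n \in C^0[0,1]$ and $z_n < 0$ on $(0,1)$ this gives $z_n(0) = 0$. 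Picard--Lindel\"of uniqueness on $\{z < 0\}$ prevents two $z_n$'s from meeting on $(0,1)$, so the sequence $\{z_n\}_n$ is pointwise increasing in $n$, bounded above by $0$ and below by $\eta$. Its pointwise limit $\bar z(\phi) := \lim_n z_n(\phi)$ is therefore a well-defined function on $[0,1]$ with $\bar z(0) = \bar z(1) = 0$.

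To identify $\bar z$ as a solution of $\eqref{first order problem0}_1$, I would adapt the monotone-convergence technique used in Lemma~\ref{lem:convergence}. The integrands $-q/z_n$ are non-negative and increase in $n$, so Beppo Levi's theorem allows passage to the limit in
\begin{equation*}
z_n(\phi) - z_n(\phi_0) = \int_{\phi_0}^{\phi} \Big( h(\sigma) - c - \frac{q(\sigma)}{z_n(\sigma)} \Big)\, d\sigma
\end{equation*}
on any subinterval of $(0,1)$ where $\bar z$ stays strictly negative. Combined with Lemma~\ref{lem:zlimit}, this gives $\bar z \in C^0[0,1] \cap C^1(0,1)$ satisfying the ODE on such subintervals.

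The main obstacle I anticipate is ruling out interior zeros of $\bar z$. Were $\bar z(\phi_\ast) = 0$ for some $\phi_\ast \in (0,1)$, then $z_n(\phi_\ast) \to 0^-$ and, since $q(\phi_\ast) > 0$, the right-hand side $-q/z_n$ would blow up at $\phi_\ast$, forcing $\dot z_n(\phi_\ast) \to +\infty$. Imitating the delicate passages at the end of the proof of Lemma~\ref{lem:convergence}, where the cases $z_n(0) \to 0$ and $z_n(1) \to 0$ are controlled by constructing an auxiliary upper-solution of the type solving \eqref{e:problem mu2} at the relevant endpoint, one produces near $\phi_\ast$ a strict upper-solution that must dominate $z_n$ for $n$ large, contradicting $\eta < z_n$ on $(0,1)$. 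Once interior zeros are excluded, $\bar z < 0$ on $(0,1)$ and $\bar z$ is the required solution of \eqref{first order problem 0-0}.
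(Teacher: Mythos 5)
Your construction of the upper- and lower-solutions is correct and nicely tuned to the hypothesis: setting $\beta=\sqrt{B}$ so that $c-A>\beta+B/\beta$, the function $\eta(\phi)=f(\phi)-(A+\beta)\phi$ satisfies $\eta(0)=0$, $\eta<0$ on $(0,1]$, and the upper-solution inequality reduces, via $-\eta\ge\beta\phi$ and $q\le B\phi$, exactly to $c-A-\beta>B/\beta$. Likewise the comparison via Lemma~\ref{lem:cm-dpde}\,\emph{(2.a.i)} correctly forces $z_n(0)=0$ for the backward-shot solutions $z_n$ with $z_n(1)=-1/n$, and Picard--Lindel\"of uniqueness on $\{z<0\}$ indeed makes $\{z_n\}_n$ increasing. (The function $\omega$ is never actually used and can be dropped.)

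The gap is the one you flag yourself, and the sketch does not close it. The sequence $\{z_n\}_n$ is bounded above only by $0$, not by a continuous negative function, so Lemma~\ref{lem:convergence} is not applicable as stated; the monotone-convergence passage controls the ODE only on intervals where $\bar z=\sup_n z_n$ is already known to be strictly negative. A sup of continuous functions is merely lower semicontinuous, so $\{\bar z=0\}$ can in principle be a nonempty null set, and the heuristic ``$-q/z_n$ blows up at $\phi_\ast$'' does not produce a contradiction from the integral identity unless $-\bar z$ is non-integrable near $\phi_\ast$, which need not follow. So as written, the proof is incomplete at exactly the step you identify as the obstacle.

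A clean repair bypasses the sequence $\{z_n\}_n$ entirely: invoke the solution $\zeta_c$ of \eqref{f.o. problem 2}, whose existence for every $c\in\R$ is established independently (see \cite[Theorem 2.6]{CM-DPDE}, as recalled in Lemma~\ref{lem:right pb}). Since $\eta(1)\le-\beta<0=\zeta_c(1)$ and both are continuous, one has $\eta(\sigma_2)<\zeta_c(\sigma_2)<0$ for $\sigma_2$ near $1$; Lemma~\ref{lem:cm-dpde}\,\emph{(2.a.i)} then yields $\eta<\zeta_c$ on $(0,\sigma_2)$, hence on $(0,1)$, and letting $\phi\to0^+$ gives $0=\eta(0)\le\zeta_c(0)\le0$, i.e.\ $\zeta_c(0)=0$, so $\zeta_c$ solves \eqref{first order problem 0-0}. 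With this substitution your argument becomes complete and is genuinely different from the paper's: the paper, following \cite[Theorem 3.1]{Marcelli-Papalini}, shoots \emph{forward} from $z_\tau(\tau)=-K\tau$, obtaining the explicit sandwich $f(\phi)-c\phi\le z_\tau(\phi)\le-K\phi$ whose negative upper bound makes Lemma~\ref{lem:convergence} directly applicable, producing a solution of the one-sided problem \eqref{first order problem}, and then appeals to the external result \cite[Lemma 2.1]{MMM2010} to pass to \eqref{first order problem 0-0}. Your backward route, once repaired as above, avoids that final external appeal.
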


\begin{proof}
We follow \cite[Theorem 3.1]{Marcelli-Papalini}. By \eqref{e:new est c*} we see that there exists $K>0$, $\eps>0$ so that
\begin{equation*}
\label{equazione8}
K^2 + \left(\sup_{\phi \in (0,1]}\frac{f(\phi)}{\phi}-c\right)K +\sup_{\phi \in (0,1]}\frac{q(\phi)}{\phi} < -\eps K <0 \ \mbox{ for } \ \phi \in (0,1].
\end{equation*}
For every $\tau>0$, we get, for any $\phi>\tau$,
\[
\frac1{\phi - \tau} \int_{\tau}^{\phi} \frac{q(s)}{s}\,ds = \frac{q(s_{\phi,\tau})}{s_{\phi,\tau}}\le \sup_{\phi \in (0,1]}\frac{q(\phi)}{\phi},
\]
where $s_{\phi,\tau}\in (\tau,\phi)$ is given by the Mean Value Theorem. As a consequence, for any $\tau>0$,
\begin{equation*}
\label{equazione9}
K^2+\left(\sup_{\phi\in (0,1]}\frac{f(\phi)}{\phi} +\eps -c\right)K +\frac1{\phi - \tau} \int_{\tau}^{\phi} \frac{q(s)}{s}\,ds < 0\ \mbox{ for every } \ \phi\in(\tau,1].
\end{equation*}
A continuity argument in \cite{Marcelli-Papalini} implies that there exists $\overline{\tau}$ such that for any $\tau < \overline{\tau}$ we have
\[
\frac{f(\phi)-f({\tau})}{\phi - \tau} \le \frac{f(\phi)}{\phi} + \eps \le \sup_{\phi \in (0,1]} \frac{f(\phi)}{\phi} + \eps,\ \phi \in (\tau,1],
\]
and thus, for such values of $\tau$, it must hold
\begin{equation*}
\label{eq2}
K^2+\left(\frac{f(\phi)-f(\tau)}{\phi-\tau} -c\right)K +\frac1{\phi - \tau} \int_{\tau}^{\phi} \frac{q(s)}{s}\,ds<0 \ \mbox{ for every } \ \phi\in(\tau,1].
\end{equation*}
This implies that the function $\eta_\tau=\eta_\tau(\phi)$, defined for $\phi \in [\tau,1]$ by
\begin{equation*}
\label{eq3}
 \eta_\tau(\phi):= -K\tau + \int_{\tau}^{\phi} \left\{h(\sigma)-c-\frac{q(\sigma)}{-K\sigma}\right\}\,d\sigma,
\end{equation*}
 is an upper-solution of $\eqref{first order problem}_1$ such that $\eta_\tau(\phi)< -K\phi$, for $\phi \in (\tau,1]$, and $\eta_\tau(\tau)=-K\tau<0$. Arguments based on Lemma \ref{lem:cm-dpde} {\em (2.a.ii)} imply that it results defined in $[\tau, 1]$ a function $z_\tau$ which solves $\eqref{e:sol from sigma}_2$ with $\mu=-K\tau$; we extend continuously $z_\tau$ to $[0,\tau]$ by $z_\tau(\phi)=-K\phi$, for $\phi \in [0,\tau]$. This gives a family $\{z_\tau\}_{\tau>0}$ of decreasing functions as $\tau \to 0^+$ (in the sense that $z_{\tau_1} \le z_{\tau_2}$ in $[0,1]$ for $0<\tau_1< \tau_2$).
After some manipulations of the differential equation in $\eqref{e:sol from sigma}_2$, based on the sign of $q/z_{\tau}$ and on $\eta_\tau(\phi)< -K\phi$, for $\phi \in (\tau,1]$, we deduce that
\begin{equation*}
\label{equazione10}
f(\phi)-c\phi \le z_\tau(\phi) \le -K\phi, \ \phi \in [0,1].
\end{equation*}
Hence, applying Lemma \ref{lem:convergence} in each interval $(a,b)\subset [0,1]$ we finally deduce that $\bar{z}$, the limit of $z_\tau$ for $\tau \to 0^+$, solves $\eqref{first order problem}_1$, $\bar{z}<0$ in $(0,1)$ and $\bar{z}(0)=0$. Hence, $\bar{z}$ is a solution of \eqref{first order problem}. Finally, as observed in \cite{Marcelli-Papalini}, an application of \cite[Lemma 2.1]{MMM2010} implies the conclusion.
\end{proof}

We now give a result about solutions to \eqref{first order problem 0-0}; see Figure \ref{f:llab} on the left.

\begin{proposition}
\label{prop: first order}
 Assume {\em (q)}. Then, there exists $c^*$ satisfying
\begin{equation}
\label{estimates on c*}
h(0)+2\sqrt{\liminf_{\varphi \to 0^+} \frac{q(\varphi)}{\varphi}} \leq c^* \leq 2\sqrt{\sup_{\varphi \in (0,1]} \frac{q(\varphi)}{\varphi}}+ \sup_{\varphi \in (0,1]} \frac{f(\phi)}{\phi},
\end{equation}
such that there exists a unique $z$ satisfying \eqref{first order problem 0-0} if and only if $c\geq c^*$.
\end{proposition}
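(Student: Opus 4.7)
The plan is to set $A := \{c \in \R : \eqref{first order problem 0-0} \text{ admits a solution}\}$ and $c^* := \inf A$, and to prove in order: (a) the two bounds in \eqref{estimates on c*}; (b) that $A = [c^*, \infty)$, by showing $A$ is upward-closed and that $c^*$ is attained; (c) uniqueness via a maximum-principle argument.

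For the bounds, the upper estimate is immediate from Lemma \ref{lem:new above est c*}, which directly exhibits $c$'s belonging to $A$. For the lower bound, I would take any $c \in A$ with solution $z$ and integrate $\eqref{first order problem 0-0}_1$ from $0$ to $\rho$, using $z(0) = 0$ and $f(0) = 0$; dividing by $\rho$ yields
\begin{equation*}
\frac{-z(\rho)}{\rho} = c - \frac{f(\rho)}{\rho} - \frac{1}{\rho}\int_0^\rho \frac{q(\sigma)}{-z(\sigma)}\,d\sigma.
\end{equation*}
A careful analysis of the $\liminf$ and $\limsup$ of $-z(\rho)/\rho$ as $\rho \to 0^+$, combined with the Mean Value Theorem inside the integral and the factorization $q(\sigma)/(-z(\sigma)) = (q(\sigma)/\sigma)\cdot(\sigma/(-z(\sigma)))$, shows that $K := \lim_{\rho \to 0^+}(-z(\rho)/\rho)$ exists, is finite (a blow-up would contradict $\dot z = h - c + q/(-z) \geq$ bounded via the ODE), and must satisfy $K^2 - (c-h(0))K + L = 0$, where $L := \liminf_{\sigma\to 0^+} q(\sigma)/\sigma$. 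The existence of a positive real root then forces $c \geq h(0) + 2\sqrt{L}$.

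For upward closure, I fix $c_1 \in A$ with solution $z_1$ and $c_2 > c_1$; since $\dot z_1 = h - c_1 - q/z_1 > h - c_2 - q/z_1$, $z_1$ is a strict upper-solution of $\eqref{first order problem0}_1$ with speed $c_2$. For each $\sigma \in (0,1)$, Lemma \ref{lem:cm-dpde}\,(1) yields a unique solution $\tilde z_\sigma$ of the $c_2$-equation with $\tilde z_\sigma(\sigma) = z_1(\sigma)$; parts (2.a.i)--(2.a.ii) of the same lemma then give $z_1 \leq \tilde z_\sigma$ on $[0, \sigma]$ and $\tilde z_\sigma \leq z_1$ on $[\sigma, 1]$, forcing $\tilde z_\sigma(0) = 0$ and guaranteeing $\tilde z_\sigma$ is defined on all of $[0, 1]$. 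The integral form of the ODE on $[\sigma, 1]$ for both $\tilde z_\sigma$ and $z_1$, together with the comparison $|\tilde z_\sigma| \geq |z_1|$ there, then gives $\tilde z_\sigma(1) \to 0$ as $\sigma \to 1^-$. Monotonicity of the family $\{\tilde z_\sigma\}$ in $\sigma$ (from the uniqueness of the ODE away from $z = 0$) and Lemma \ref{lem:convergence} applied on compact subintervals of $(0, 1)$ will then produce a limit $z_2$ that solves \eqref{first order problem 0-0} for $c_2$, with strict negativity in $(0,1)$ enforced by the fact that any interior zero would make $\dot z_2$ blow up via the ODE. Hence $c_2 \in A$. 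The identical argument applied to a decreasing sequence $c_n \downarrow c^*$ (with $c_n \in A$) and its monotone increasing sequence of solutions $z_n$ delivers $c^* \in A$, so $A = [c^*, \infty)$.

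Uniqueness is finally immediate from the maximum principle: if $z_1 \neq z_2$ both solve \eqref{first order problem 0-0} for the same $c$, the continuous function $w := z_2 - z_1$ vanishes at $0$ and $1$ and, not being identically zero, attains a strict positive or negative extremum at some $\phi_0 \in (0, 1)$; there $\dot w(\phi_0) = 0$ and the ODE give $q(\phi_0)/z_1(\phi_0) = q(\phi_0)/z_2(\phi_0)$, which with $q(\phi_0) > 0$ forces $z_1(\phi_0) = z_2(\phi_0)$, a contradiction. The main technical obstacle lies in the passage to the limit in the upward-closure and attainment steps: because the two-sided condition $z(0) = z(1) = 0$ prevents a uniform bound $z_n \leq v < 0$ on all of $[0, 1]$, Lemma \ref{lem:convergence} cannot be invoked globally; one must work on compact subintervals of $(0, 1)$ and argue separately that the monotone limit inherits the correct endpoint values and strict interior negativity.
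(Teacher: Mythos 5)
Your proposal takes a genuinely different route from the paper: the paper proves Proposition \ref{prop: first order} in one line, deferring the existence/uniqueness/threshold structure to \cite[Proposition 1]{MMM2010} and getting \eqref{estimates on c*} from Lemma \ref{lem:new above est c*} plus the inequality $\sup f/\phi\le\max h$, whereas you rebuild the whole argument from scratch. Your upper bound and your uniqueness argument are sound (and match the spirit of what the cited reference would give). However, there is a concrete error in your lower-bound argument.

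You assert that $K:=\lim_{\rho\to 0^+}(-z(\rho)/\rho)$ exists for any solution $z$ of \eqref{first order problem 0-0}. Since $z(0)=0$, this limit is precisely $-\dot z(0)$, and Remark \ref{r:ex_Diego} of the paper gives an explicit $q$ satisfying (q) and an explicit solution $z(\phi)=-\left(2+\sin(\log\phi)\right)(1-\phi)^2\phi$ of \eqref{first order problem} (with $z(1)=0$ as well, so it solves \eqref{first order problem 0-0}) for which $\dot z(0)$ does \emph{not} exist: $-z(\phi)/\phi=(2+\sin\log\phi)(1-\phi)^2$ oscillates between values near $1$ and near $3$ as $\phi\to 0^+$. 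Under hypothesis (q) only a $\limsup$ of $q(\phi)/\phi$ is controlled, so neither $\dot q(0)$ nor $\dot z(0)$ need exist, and the algebraic identity $K^2-(c-h(0))K+L=0$ you want to invoke has no meaning. The correct argument has to avoid presupposing the limit. One workable version: suppose $c-h(0)<2\sqrt{L}$ with $L:=\liminf q(\phi)/\phi$; setting $\sigma(\phi):=-z(\phi)/\phi$ one computes $\phi\,\sigma\,\dot\sigma=-\bigl[\sigma^2-(c-h(\phi))\sigma+q(\phi)/\phi\bigr]$, and since the discriminant is eventually negative near $0$, $\sigma$ is strictly decreasing there, so $\sigma(0^+)$ exists in $(0,+\infty]$. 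The case $\sigma(0^+)=+\infty$ is excluded because $\dot z>h-c$ bounds $-z(\phi)/\phi$ above; if $\sigma(0^+)=K<\infty$ one applies the Mean Value Theorem to extract a sequence $\xi_n\to 0^+$ with $\dot z(\xi_n)\to -K$, substitutes into the ODE, and obtains $L\le K(c-h(0))-K^2\le(c-h(0))^2/4<L$, a contradiction. Your sketch skips exactly the two delicate points — existence of a one-sided limit for $\sigma$, and replacing an exact equation by an inequality involving $\liminf$ — and in the form written the step is false. The upward-closure and attainment paragraph also leaves the two-sided boundary condition unresolved (you flag this yourself), so as it stands the construction of $z_2$ at the infimum speed is not complete either.
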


\begin{proof}
The result, apart from the refined estimate \eqref{estimates on c*} is proved in \cite[Proposition 1]{MMM2010}. Estimate \eqref{estimates on c*} follows from Lemma \ref{lem:new above est c*} and $\sup_{\phi\in(0,1]}f(\phi)/\phi\le \max_{\phi\in[0,1]}h(\phi)$.
\end{proof}

\section{The singular problem with left boundary condition}\label{s:sing1}
\setcounter{equation}{0}

Now we face problem \eqref{first order problem}.
We always assume (q) and refer to the threshold $c^*$ introduced in Proposition \ref{prop: first order}; we denote by $z^*$ the corresponding unique solution to \eqref{first order problem 0-0}. See Figure \ref{f:llab} on the left for an illustration of Proposition \ref{prop: first order ii}.

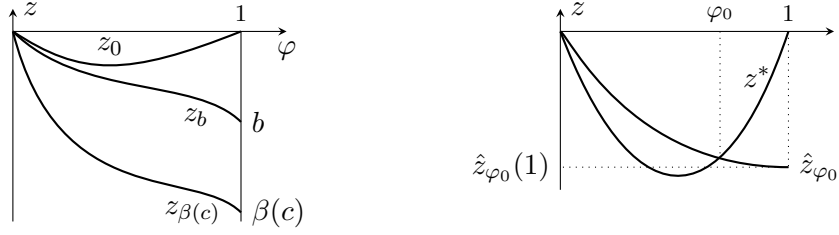
\begin{figure}[htb]
\begin{center}
\begin{tikzpicture}[>=stealth, scale=0.6]
\draw[->] (0,0) --  (6,0) node[below]{$\phi$} coordinate (x axis);
\draw[->] (0,0) -- (0,0.5) node[right]{$z$} coordinate (y axis);
\draw (0,0) -- (0,-4.2);
\draw (5,0) -- (5,-4.2);
\draw[thick] (0,0) .. controls (1.5,-1) and (2.5,-1) .. node[midway, above]{$z_0$} (5,0) ; 
\draw[thick] (0,0) .. controls (1.5,-1.5) and (4,-1) .. node[near end, below]{$z_b$} (5,-2) node[right]{$b$}; 
\draw[thick] (0,0) .. controls (1,-4) and (4,-3) .. node[near end, below]{$z_{\beta(c)}$} (5,-4) node[right]{$\beta(c)$}; 
\draw (5,0)  node[above]{\footnotesize{$1$}};

\begin{scope}[xshift=12cm]
\draw[->] (0,0) --  (6,0) coordinate (x axis);
\draw[->] (0,0) -- (0,0.5) node[right]{$z$} coordinate (y axis);
\draw (0,0) -- (0,-3.5);
\draw[thick] (0,0) .. controls (1.5,-4) and (3.5,-4.5) .. node[very near end, above]{$z^*\ $}  (5,0); 
\draw[thick] (0, 0) .. controls (0.2,-0.1) and (1.5,-3) .. (5,-3) node[right]{$\hat{z}_{\phi_0}$};

\draw (5,0)  node[above]{\footnotesize{$1$}};
\draw[dotted] (3.5,0) node[above]{\footnotesize{$\phi_0$}} -- (3.5,-2.8);
\draw[dotted] (5,-3) -- (0, -3) node[left]{$\hat{z}_{\phi_0}(1)$};
\draw[dotted] (5,0) -- (5,-3);
\end{scope}
\end{tikzpicture}
\end{center}

\caption{\label{f:llab}{Left: an illustration of Propositions \ref{prop: first order} and \ref{prop: first order ii}, for fixed $c>c^*$. Solutions to \eqref{first order problem} are labelled according to their right-hand limit: $z_0$ occurs in the former proposition, $z_b$ in the latter. Right: the functions $\hat{z}_{\phi_0}$ and $z^*$ in {\em Step (i)} of Proposition \ref{prop: first order ii}.}}
\end{figure}

\begin{proposition}
\label{prop: first order ii}
Assume {\em (q)}. For every $c>c^*$, there exists $\beta=\beta(c) <0$ satisfying
\begin{equation}
\label{e:estimate beta}
\beta\ge f(1)-c,
\end{equation}
such that problem \eqref{first order problem} with the additional condition $z(1)=b<0$ admits a unique solution $z$ if and only if $b\geq \beta$.
\end{proposition}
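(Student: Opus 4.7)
The plan is to parameterize candidate solutions by their value $b = z(1)$ at the right endpoint. For each $b < 0$, Lemma \ref{lem:cm-dpde}(1.a) with $\sigma = 1$, $\mu = b$ provides a unique $z_b \in C^0[0,1] \cap C^1(0,1)$ solving $\eqref{first order problem0}_1$ with $z_b(1) = b$ and $z_b < 0$ on $(0,1)$, and by Lemma \ref{lem:zlimit} $z_b(0) \in (-\infty, 0]$. A solution of \eqref{first order problem} satisfying the extra condition $z(1) = b$ exists if and only if $z_b(0) = 0$, in which case it must coincide with $z_b$ by the uniqueness in Lemma \ref{lem:cm-dpde}(1.a); this supplies the uniqueness claim and reduces the problem to describing $\mathcal{B} := \{b < 0 : z_b(0) = 0\}$.

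By the same uniqueness, two curves $z_{b_1}$ and $z_{b_2}$ with $b_1 \ne b_2$ cannot cross on $(0, 1]$, so $b_1 < b_2 < 0$ forces $z_{b_1} < z_{b_2}$ on $[0,1)$; the same comparison against $z^*$ yields $z_b < z^*$ on $(0,1)$ for every $b < 0$. Hence $\mu(b) := z_b(0)$ is monotone non-decreasing, and Lemma \ref{lem:convergence} applied to monotone sequences $b_n \to b$ gives uniform convergence $z_{b_n} \to z_b$ and therefore continuity of $\mu$. From $\dot z_b > h - c$ (since $-q/z_b > 0$ on $(0,1)$), integration over $[0,1]$ yields $z_b(0) < b - f(1) + c$; so $\mu(b) \to -\infty$ as $b \to -\infty$, and whenever $\mu(b) = 0$ one has $b \geq f(1) - c$. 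For an increasing sequence $b_n \to 0^-$, Lemma \ref{lem:convergence} produces a uniform limit $\bar z \leq z^*$ of $\{z_{b_n}\}$ solving the ODE with $\bar z(1) = 0$; combined with the uniqueness of $z^*$ from Proposition \ref{prop: first order} and with the uniqueness of the backward IVP (Lemma \ref{lem:cm-dpde}(1.a)) on intervals $[\sigma, 1]$ with $\sigma$ close to $1$, this forces $\bar z = z^*$, so $\mu(b) \to 0$ as $b \to 0^-$.

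The decisive step is exhibiting some $b \in (-\infty, 0)$ belonging to $\mathcal{B}$. I would construct a function $\eta$, continuous on $[0,1]$ and $C^1$ on $(0,1)$, that is a strict upper-solution of $\eqref{first order problem0}_1$ on a right neighborhood $(0, \delta]$ of $0$, coincides with a genuine solution of the ODE on $[\delta, 1]$, and satisfies $\eta(0) = 0$ with $\eta(1) < 0$. The local candidate near $0$ is $\eta(\phi) = -K\phi$ for a suitable positive $K$: the strict upper-solution condition reduces to the quadratic inequality $K^2 - K(c - h(\phi)) + q(\phi)/\phi < 0$ on $(0,\delta]$, and such a $K$ exists because $c > c^* \geq h(0) + 2\sqrt{\liminf q/\phi}$ ensures that the limiting quadratic at $\phi = 0$ admits two positive real roots between which $K$ may be chosen. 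The extension on $[\delta, 1]$ is supplied by Lemma \ref{lem:cm-dpde}(1.b); by further shrinking $\delta$ and again using $c > c^*$, one ensures the extension remains strictly negative up to $\phi = 1$. For any $b \in [\eta(1), 0)$, the uniqueness in Lemma \ref{lem:cm-dpde}(1.a) applied on $[\delta, 1]$ yields $z_b \geq \eta$ there, so $\eta(\delta) \leq z_b(\delta) < 0$; Lemma \ref{lem:cm-dpde}(2.a.i) then gives $\eta < z_b$ on $(0, \delta)$, and passing to the limit as $\phi \to 0^+$ forces $0 = \eta(0) \leq z_b(0) \leq 0$, whence $z_b(0) = 0$.

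Setting $\beta := \inf \mathcal{B}$, the previous two paragraphs place $\beta \in [f(1) - c, \eta(1)] \subset [f(1) - c, 0)$; continuity of $\mu$ gives $\mu(\beta) = 0$, and monotonicity together with $\mu \leq 0$ yields $\mathcal{B} = [\beta, 0)$, which completes the proof. The main obstacle will be the construction of $\eta$ in the third paragraph: one has to verify both that $-K\phi$ is a \emph{strict} upper-solution on some $(0, \delta]$ under the mild assumption $\limsup q/\phi < \infty$ rather than the existence of $\lim q/\phi$, and that the forward extension provided by Lemma \ref{lem:cm-dpde}(1.b) stays strictly negative all the way to $\phi = 1$.
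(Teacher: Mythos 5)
Your first, second, and fourth paragraphs (parameterization by $b=z(1)$, monotonicity and continuity of $\mu(b):=z_b(0)$, the lower bound $z_b(0)\le b-f(1)+c$, and the identification $\mathcal{B}=[\beta,0)$) are sound and parallel the paper's Steps (ii)--(iv). The genuine gap is in the third paragraph, which you correctly flag as the weak spot.

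The construction of a strict upper-solution $\eta(\phi)=-K\phi$ on $(0,\delta]$ requires the quadratic inequality $K^2+K(h(\phi)-c)+q(\phi)/\phi<0$ to hold for \emph{all} small $\phi$. Since $q(\phi)/\phi$ may oscillate, this forces the discriminant condition $(c-h(0))^2>4\limsup_{\phi\to0^+}q(\phi)/\phi$ (up to an $\epsilon$ for $h$). But $c>c^*$ and the lower bound $c^*\ge h(0)+2\sqrt{\liminf_{\phi\to0^+}q(\phi)/\phi}$ from \eqref{estimates on c*} give at best $(c-h(0))^2>4\liminf q/\phi$, not $4\limsup q/\phi$. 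When $\liminf<\limsup$ — which (q) permits — no admissible $K$ may exist for $c$ in a nonempty interval above $c^*$. In fact, the quadratic-upper-solution device is precisely what Lemma \ref{lem:new above est c*} uses to prove the \emph{upper} bound on $c^*$, i.e.\ it is guaranteed to succeed only for $c>\sup f/\phi+2\sqrt{\sup q/\phi}$, which may strictly exceed $c^*$. So your "decisive step" does not cover every $c>c^*$.

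The paper sidesteps this entirely: in Step (i) it solves the ODE with initial datum $z(\phi_0)=z^*(\phi_0)$ at an \emph{interior} point $\phi_0$, where $z^*$ is the already-known solution of \eqref{first order problem 0-0} at $c=c^*$. Because $c>c^*$, the resulting $\hat z_{\phi_0}$ is a strict lower-solution for the $c^*$-equation; Lemma \ref{lem:cm-dpde}(2.b) then gives $z^*<\hat z_{\phi_0}<0$ on $(0,\phi_0)$, which forces $\hat z_{\phi_0}(0^+)=0$ by squeezing, while the monotonicity of $z^*-\hat z_{\phi_0}$ on $(\phi_0,1)$ yields $\hat z_{\phi_0}(1)<0$. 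This comparison against the anchor $z^*$ needs no discriminant condition and no control on the pointwise behavior of $q(\phi)/\phi$, so it works uniformly for every $c>c^*$ under (q) alone. To repair your argument you would need to replace the explicit barrier $-K\phi$ by a comparison with $z^*$ along these lines; as written, the existence of $K$ is unjustified and the proof is incomplete.
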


In the above proposition, the threshold case $c=c^*$ is a bit more technical; we shall prove in Proposition \ref{prop: first order 3} that $\beta(c^*)=0$ under some further assumptions.

\begin{proofof}{Proposition \ref{prop: first order ii}}
For any $c>c^*$, we define the set $\mathcal{A}_c$ as
$$
\mathcal{A}_c:=\{b < 0: \eqref{first order problem} \ \mbox{ admits a solution with }\ z(1)=b\}.
$$
We show that $\mathcal{A}_c=[\beta, 0)$, for some $\beta=\beta(c) <0$, by dividing the proof into four steps.

\medskip

{\em Step (i): $\mathcal{A}_c\neq \varnothing$.} We claim that there exists $\hat{z}$ which satisfies \eqref{first order problem} and $\hat{z}(1)<0$. Take $\phi_0\in (0,1)$ and consider the following problem, see Figure \ref{f:llab} on the right,
\begin{equation}
\label{Ac not empty}
\begin{cases}
\dot{z}(\phi)=h(\phi) - c- \frac{q(\phi)}{z(\phi)},\\
z(\phi_0)=z^*(\phi_0).
\end{cases}
\end{equation}
Lemma \ref{lem:cm-dpde} {\em (1)} implies the existence of a solution $\hat{z}_{\phi_0}$ of \eqref{Ac not empty} defined in its maximal-existence interval $(0,\delta)$, for some $\phi_0 < \delta \leq 1$. Since $\hat{z}_{\phi_0}$ satisfies $\eqref{Ac not empty}_1$ and $c>c^*$, then
$$
\dot{\hat{z}}_{\phi_0}(\phi)= h(\phi)-c^* - \frac{q(\phi)}{\hat{z}_{\phi_0}(\phi)} + (c^*-c) < h(\phi)-c^* - \frac{q(\phi)}{\hat{z}_{\phi_0}(\phi)}, \quad \phi\in (0, \delta).
$$
This implies that $\hat{z}_{\phi_0}$ is a strict lower-solution of $\eqref{first order problem}_1$ with $c=c^*$. From Lemma \ref{lem:cm-dpde} {\em (2.b)}, this and $\hat{z}_{\phi_0}(\phi_0)=z^*(\phi_0)<0$ imply that
\begin{equation}\label{e:asd}
z^* <\hat{z}_{\phi_0}\  \mbox{ in } \ (0,\phi_0) \quad \mbox{ and } \quad \hat{z}_{\phi_0} < z^* \ \mbox{ in } \ (\phi_0, \delta).
\end{equation}
Since $z^* < \hat{z}_{\phi_0} <0$ in $(0,\phi_0)$, we get $\hat{z}_{\phi_0}(0^+)=0$. Since $\hat{z}_{\phi_0} < z^*$ in $(\phi_0,\delta)$, we obtain that $\hat{z}_{\phi_0}(\delta^-)\leq z^*(\delta^-)$. Thus $\delta=1$, otherwise $\hat{z}_{\phi_0}(\delta)<0$, in contradiction with the fact that $(0,\delta)$ is the maximal-existence interval of $\hat{z}_{\phi_0}$.

From Lemma \ref{lem:zlimit}, $\hat{z}_{\phi_0}(1)\in\R$. It remains to prove that $\hat{z}_{\phi_0}(1)<0$. From what we observed above, it follows that $z^*>\hat{z}_{\phi_0}$ in $(\phi_0,1)$. Hence, for any $\phi \in (\phi_0,1)$, we have
\[
\dot{z^*}(\phi)-\dot{\hat{z}}_{\phi_0}(\phi) =c-c^* +\frac{q(\phi)}{z^*(\phi)\hat{z}_{\phi_0}(\phi)}\left(z^*-\hat{z}_{\phi_0}\right)(\phi)>\frac{q(\phi)}{z^*(\phi)\hat{z}_{\phi_0}(\phi)}\left(z^*-\hat{z}_{\phi_0}\right)(\phi)>0.
\]
This implies that $(z^*-\hat{z}_{\phi_0})$ is strictly increasing in $(\phi_0, 1)$ and hence
\[
-\hat{z}_{\phi_0}(1)=z^*(1) - \hat{z}_{\phi_0}(1) > z^*(\phi_0) - \hat{z}_{\phi_0}(\phi_0)=0,
\]
which means $\hat{z}_{\phi_0}(1)<0$. Thus, $\hat{z}_{\phi_0}(1)\in \mathcal{A}_c$.

\medskip

{\em Step (ii): if $b \in \mathcal{A}_c$ then $[b,0)\subset \mathcal{A}_c$}. Suppose that there exists $ b \in \mathcal{A}_c$ and let $z_b$ be the solution of \eqref{first order problem} and $z_b(1)=b$. Take $b<b_1 < 0$. For Lemma \ref{lem:cm-dpde} {\em (1.a)} there exists $z_{b_1}$ defined in $(0,1)$ satisfying $\eqref{first order problem}_1$ and $z_{b_1}(1)=b_1<0$.
\par
 We claim that $z_b<z_{b_1}$ in $(0,1)$. If not, then $z_b(\phi_0)=z_{b_1}(\phi_0)=:y_0<0$, for some $\phi_0 \in (0,1)$. Without loss of generality we can assume $z_b<z_{b_1}$ in $(\phi_0, 1]$. We denote by $f_c(\varphi, y)= h(\phi) - c -q(\phi)/y$ the right-hand side of the differential equation in \eqref{first order problem}; the function $f_c$ is continuous in $[0,1]\times(-\infty,0)$ and locally Lipschitz-continuous in $y$. Hence, $z_{b}$ and $z_{b\rq{}}$ are two different solutions of
\begin{equation*}
\begin{cases}
y\rq{}=f_c(\varphi, y), \ \varphi\in(\varphi_0,1),\\
y(\varphi_0)=y_0,
\end{cases}
\end{equation*}
which contradicts the uniqueness of the Cauchy problem. Thus, $z_b<z_{b_1}<0$ in $(0,1)$. Since $z_b$ satisfies $\eqref{first order problem}_3$ then $z_{b_1}(0^+)=0$ and hence $b_1 \in \mathcal{A}_c$.
 \medskip

{\em Step (iii): $\inf \mathcal{A}_c \in \R$}. Suppose that $z$ satisfies Equation $\eqref{first order problem}_1$. As already observed, this implies
$\dot{z}(\phi) > h(\phi)-c$, $\phi \in (0,1)$. Thus, for any $\phi \in (0,1)$,
\begin{equation}
\label{e:bound beta}
z(\phi)=z(\phi)-z(0) \geq \int_{0}^{\phi} h(\sigma) - c\,d\sigma=f(\phi)-c\phi.
\end{equation}
This implies that $z(1) \geq f(1)- c$. Define $\beta=\beta(c)$ by
$$
\beta:=\inf \mathcal{A}_c.
$$
Thus, $\beta \geq   f(1)-c>-\infty$, which also proves \eqref{e:estimate beta}.
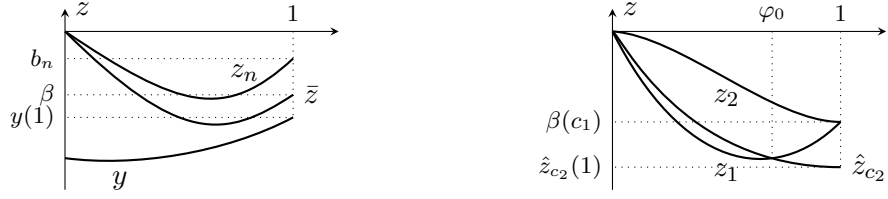
\begin{figure}[htb]
\begin{center}
\begin{tikzpicture}[>=stealth, scale=0.6]
\draw[->] (0,0) --  (6,0) coordinate (x axis);
\draw[->] (0,0) -- (0,0.5) node[right]{$z$} coordinate (y axis);
\draw (0,0) -- (0,-3.5);
\draw[thick] (0,-2.8) .. controls (1.5,-3) and (3.5,-2.7) .. node[near start, below]{$y$}  (5,-1.9); 
\draw[thick] (0,0) .. controls (2.5,-1.7) and (3.5,-2) .. node[above, near end]{$z_n$} (5,-0.6);
\draw[thick] (0,0) .. controls (2.5,-2.5) and (3.5,-2.4) .. (5,-1.4) node[right]{$\bar{z}$} ;
\draw (5,0)  node[above]{\footnotesize{$1$}};
\draw[dotted] (5,0) -- (5,-1.9);
\draw[dotted] (5,-0.6) -- (0,-0.6) node[left]{\footnotesize{$b_n$}};
\draw[dotted] (5,-1.4) -- (0,-1.4) node[left]{\footnotesize{$\beta$}};
\draw[dotted] (5, -1.9) -- (0, -1.9) node[left]{\footnotesize{$y(1)$}};

\begin{scope}[xshift=12cm]
\draw[->] (0,0) --  (6,0) coordinate (x axis);
\draw[->] (0,0) -- (0,0.5) node[right]{$z$} coordinate (y axis);
\draw (0,0) -- (0,-3.5);
\draw[thick] (0,0) .. controls (1.5,-3) and (3.5,-3.5) .. node[below]{$z_1$}  (5,-2);
\draw[thick] (0,0) .. controls (1.5,-0.1) and (3.5,-2) .. node[below]{$z_2$}  (5,-2);
\draw[thick] (0, 0) .. controls (0.2,-0.1) and (1.5,-3) .. (5,-3) node[right]{$\hat{z}_{c_2}$};

\draw (5,0)  node[above]{\footnotesize{$1$}};
\draw[dotted] (3.5,0) node[above]{\footnotesize{$\phi_0$}} -- (3.5,-2.8);
\draw[dotted] (5,-3) -- (0, -3) node[left]{\footnotesize{$\hat{z}_{c_2}(1)$}};
\draw[dotted] (5,-2) -- (0, -2) node[left]{\footnotesize{$\beta(c_1)$}};
\draw[dotted] (5,0) -- (5,-3);
\end{scope}
\end{tikzpicture}
\end{center}

\caption{\label{f:figurestep3}{Left: the functions $z_n$, $y$ and $\bar{z}$ in {\em Step (iv)} of Proposition \ref{prop: first order ii}. Right: the functions $z_1$, $z_2$ and $\hat{z}_{c_2}$ in the proof of {\em (i)} of Corollary \ref{cor:beta}.}}
\end{figure}

\medskip

{\em Step (iv): $\beta\in\mathcal{A}_c$}.
Let $\{b_n\}_n\subset \mathcal{A}_c$ be a strictly decreasing sequence such that $b_n \to \beta^+$. Since $b_n \in \mathcal{A}_c$, each  $b_n$ is associated with a solution $z_n$ of \eqref{first order problem} and $z_n(1)=b_n$. From the uniqueness of the solution of Cauchy problem for $\eqref{first order problem}_1$, the sequence $z_n$ is decreasing.

For any given $\delta<\beta$, let $y$ be defined by
\begin{equation*}
\begin{cases}
\dot{y}(\phi)=h(\phi)-c -\frac{q(\phi)}{y(\phi)}, \ \phi< 1\\
y(1)=\delta< \beta.
\end{cases}
\end{equation*}
Such a $y$ exists and is defined in $[0,1]$ from Lemma \ref{lem:cm-dpde} {\em (1.a)}. Also, $b_n > \delta$, for any $n\in \N$. Thus, for any $n\in\N$, $z_n \ge y$ in $[0,1]$. Lemma \ref{lem:convergence} implies that there exists $\bar{z}$ satisfying \eqref{first order problem0} such that $z_n \to \bar{z}$ uniformly in $[0,1]$ (see Figure \ref{f:figurestep3} on the left). In particular, we deduce that $\bar{z}(0)=0$ and $\bar{z}(1)=\beta$. Hence, we conclude that $\beta \in \mathcal{A}_c$.

\par
Putting together {\em Steps (i) -- (iv)}, we conclude that $\mathcal{A}_c=[\beta,0)$.
\end{proofof}

The monotonicity of solutions of \eqref{first order problem} now follows. We omit the proof since it is quite standard, once that Lemma \ref{lem:cm-dpde} {\em (2)} is given. (See \cite[Lemma 5.1]{CM-DPDE}.)

\begin{corollary}[Monotonicity of solutions]
\label{cor:monotonicity}
Assume {\em (q)}. Let $c_2>c_1\geq c^*$ and assume that $z_1$ and $z_2$ satisfy \eqref{first order problem} with $c=c_1$ and $c=c_2$, respectively. Then, if $z_1(1)\leq z_2(1)$ it occurs that $z_1<z_2$ in $(0,1)$.
\end{corollary}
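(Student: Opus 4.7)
The strategy is to apply the comparison tools of Lemma \ref{lem:cm-dpde} (2), together with the strict inequality $c_2>c_1$. The key observation is that, since $c_2>c_1$, the solution $z_2$ satisfies
\[
\dot z_2(\phi) = h(\phi) - c_2 - \frac{q(\phi)}{z_2(\phi)} < h(\phi) - c_1 - \frac{q(\phi)}{z_2(\phi)}, \quad \phi \in (0,1),
\]
so $z_2$ is a strict lower-solution of $\eqref{first order problem0}_1$ corresponding to $c_1$ on the whole of $(0,1)$. When $z_2(1)<0$, the conclusion follows at once from Lemma \ref{lem:cm-dpde} (2.b.i) applied with $\omega=z_2$, $z=z_1$, $\sigma_1=0$, $\sigma_2=1$: the hypothesis $0>z_2(1)\ge z_1(1)$ yields $z_2>z_1$ on $(0,1)$.

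There remains the degenerate case $z_2(1)=0$, in which $z_2$ is necessarily the unique solution of \eqref{first order problem 0-0} associated with $c_2$ by Proposition \ref{prop: first order}. If moreover $z_1(1)<0$, then continuity forces $z_2>z_1$ on some left neighbourhood $(1-\delta,1)$ of $1$; picking $\sigma\in(1-\delta,1)$ we have in particular $0>z_2(\sigma)\ge z_1(\sigma)$, so Lemma \ref{lem:cm-dpde} (2.b.i) with $\sigma_2=\sigma$ gives $z_2>z_1$ on $(0,\sigma)$, and concatenation with the left neighbourhood yields $z_2>z_1$ on $(0,1)$.

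The main obstacle is the sub-case $z_1(1)=z_2(1)=0$, since Lemma \ref{lem:cm-dpde} is not directly applicable at $\sigma_2=1$ when the barrier vanishes. I would argue by contradiction: assume the set $S:=\{\phi\in(0,1):z_1(\phi)\ge z_2(\phi)\}$ is non-empty and set $\phi^*:=\sup S$. If $\phi^*<1$, continuity forces $z_1(\phi^*)=z_2(\phi^*)=:y_0<0$, whence
\[
\dot z_1(\phi^*)-\dot z_2(\phi^*) = c_2-c_1>0,
\]
so $z_1-z_2$ strictly increases through zero at $\phi^*$, creating points of $S$ immediately to the right of $\phi^*$ --- a contradiction. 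If $\phi^*=1$, the function $w:=z_1-z_2$ satisfies the linear equation
\[
\dot w(\phi) = (c_2-c_1) + \frac{q(\phi)}{z_1(\phi)\,z_2(\phi)}\,w(\phi), \quad \phi\in(0,1),
\]
with strictly positive coefficient $p:=q/(z_1 z_2)$; consequently $w$ admits no critical point where $w>0$, since there one would have $0=c_2-c_1+p(\xi)w(\xi)>0$. Taking a sequence $\phi_n\nearrow 1$ with $w(\phi_n)\ge 0$ and analysing the maximum of $w$ on $[\phi_n,1]$, interior maxima are ruled out by the previous observation, left-endpoint maxima by $\dot w(\phi_n)>0$ whenever $w(\phi_n)\ge 0$, and right-endpoint maxima by $w(1)=0$, delivering the required contradiction and concluding that $z_1<z_2$ throughout $(0,1)$.
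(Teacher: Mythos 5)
Your proof is correct and fills in the details the paper omits (the paper simply says the argument is standard, via Lemma~\ref{lem:cm-dpde}~\emph{(2)}, and points to an external reference). Your key observation --- that the strict inequality $c_2>c_1$ makes $z_2$ a \emph{strict} lower-solution of $\eqref{first order problem0}_1$ for $c=c_1$ on the whole of $(0,1)$ --- is exactly what makes the comparison machinery applicable, and your case split by the value of $z_2(1)$ is well organized. The only delicate point, $z_1(1)=z_2(1)=0$, lies genuinely outside the reach of Lemma~\ref{lem:cm-dpde}~\emph{(2.b.i)} (which needs a strictly negative right boundary value), and the linear-ODE argument for $w=z_1-z_2$ is a sound way to close it: since $\dot w=(c_2-c_1)+p\,w$ with $p=q/(z_1z_2)>0$ on $(0,1)$, one has $\dot w\geq c_2-c_1>0$ wherever $w\geq0$, so once $w(\phi_0)\geq0$ the function $w$ cannot return to $0$ at $\phi=1$. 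A tiny stylistic point: your ``previous observation'' is stated for $w>0$, but when the maximum of $w$ on $[\phi_n,1]$ equals $0$ you need the case $w=0$ as well; the same computation $\dot w=c_2-c_1>0$ covers it, so nothing is missing, but stating it for $w\geq0$ from the outset would make the chain of exclusions airtight.
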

%

A monotony property of $\beta(c)$ now follows.

\begin{corollary}
\label{cor:beta}
Under {\em (q)} we have:
\begin{enumerate}[(i)]

\item $\beta(c_2) < \beta(c_1)$ for every $c_2 > c_1 > c^*$;

\item $\beta(c) \to -\infty$ as $c \to +\infty$.

\end{enumerate}
\end{corollary}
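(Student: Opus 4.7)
Fix $c_2 > c_1 > c^*$ and let $z_1$ denote the solution of \eqref{first order problem} with speed $c_1$ satisfying $z_1(1)=\beta(c_1)$, provided by Proposition \ref{prop: first order ii}. Mimicking \emph{Step (i)} of that proposition but with $z_1$ in place of $z^*$, pick $\phi_0 \in (0,1)$ and let $\hat{z}_{c_2}$ be the solution of the Cauchy problem for $\eqref{first order problem}_1$ with $c=c_2$ and initial condition $\hat{z}_{c_2}(\phi_0)=z_1(\phi_0)<0$, given by Lemma \ref{lem:cm-dpde} \emph{(1)}. Since $c_2>c_1$, $z_1$ is a strict upper-solution of the $c_2$-equation, so Lemma \ref{lem:cm-dpde} \emph{(2.a)} extends $\hat{z}_{c_2}$ to all of $[0,1]$ with $z_1<\hat{z}_{c_2}<0$ on $(0,\phi_0)$ and $\hat{z}_{c_2}<z_1$ on $(\phi_0,1)$. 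A squeeze argument in $(0,\phi_0)$ gives $\hat{z}_{c_2}(0^+)=0$, and the same Gronwall-type calculation used in \emph{Step (i)} — namely $\dot{z}_1-\dot{\hat{z}}_{c_2}=(c_2-c_1)+q(z_1-\hat{z}_{c_2})/(z_1\hat{z}_{c_2})>0$ on $(\phi_0,1)$ — shows $z_1-\hat{z}_{c_2}$ is strictly increasing there, hence $\hat{z}_{c_2}(1)<z_1(1)=\beta(c_1)$. Thus $\hat{z}_{c_2}$ solves \eqref{first order problem} with speed $c_2$, so $\hat{z}_{c_2}(1)\in\mathcal{A}_{c_2}$ and $\beta(c_2)\le\hat{z}_{c_2}(1)<\beta(c_1)$. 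Figure \ref{f:figurestep3} (right) already illustrates this configuration.

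\textbf{Proof plan for (ii).} By (i), the map $c\mapsto\beta(c)$ is strictly decreasing on $(c^*,+\infty)$, so $L:=\lim_{c\to+\infty}\beta(c)$ exists in $[-\infty,0)$; the task is to exclude a finite $L$. Given any $A>0$, consider the explicit ansatz $\eta(\phi):=-A\phi$. The strict upper-solution condition $\dot\eta>h-c-q/\eta$ simplifies to $c>A+h(\phi)+q(\phi)/(A\phi)$ on $(0,1]$. Using the continuity of $h$ on $[0,1]$ together with the boundedness of $q(\phi)/\phi$ on $(0,1]$ (continuous away from $0$, and $\limsup_{\phi\to 0^+}q(\phi)/\phi<+\infty$ from (q)), the right-hand side is bounded, so this inequality holds uniformly in $\phi$ for all $c$ large enough. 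For any such $c$, solve $\dot z=h-c-q/z$ with $z(1)=-A=\eta(1)$ via Lemma \ref{lem:cm-dpde} \emph{(1.a)}; Lemma \ref{lem:cm-dpde} \emph{(2.a.i)} applied with $\sigma_1=0$, $\sigma_2=1$ gives $-A\phi=\eta(\phi)<z(\phi)<0$ on $(0,1)$, and squeeze yields $z(0^+)=0$. Hence $z$ solves \eqref{first order problem} with $z(1)=-A$, so $-A\in\mathcal{A}_c$ and therefore $\beta(c)\le-A$ for all $c$ large. Arbitrariness of $A>0$ forces $\beta(c)\to-\infty$.

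\textbf{Anticipated main obstacle.} Part (i) is essentially a direct re-run of the machinery already deployed in Proposition \ref{prop: first order ii}, with $z_1$ replacing $z^*$; the only nontrivial check is that the strict inequality is preserved \emph{up to} the endpoint $\phi=1$, which the explicit ODE for the difference $z_1-\hat{z}_{c_2}$ handles at once. For part (ii), the only real concern is whether the crude linear ansatz $-A\phi$ really qualifies as an upper-solution near $\phi=0$; this is precisely where the hypothesis $\limsup_{\phi\to 0^+}q(\phi)/\phi<+\infty$ built into (q) is indispensable, since it is what keeps the offending term $q(\phi)/(A\phi)$ bounded as $\phi\to 0^+$ and thus permits a choice of $c$ working uniformly on $(0,1]$.
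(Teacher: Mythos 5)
Your part \emph{(i)} is essentially the argument in the paper, slightly streamlined: you go directly to the strict inequality by choosing $z_1(1)=\beta(c_1)$, whereas the paper first establishes $\mathcal{A}_{c_1}\subseteq\mathcal{A}_{c_2}$ (hence $\beta(c_2)\leq\beta(c_1)$) using the companion solution $z_2$, and then runs the same $\hat z_{c_2}$-construction with a generic $b_1\in\mathcal{A}_{c_1}$. The paper also invokes $z_2$ as the upper bound guaranteeing $\hat z_{c_2}$ reaches $\phi=1$; you get the same conclusion from the ``moreover'' clause in Lemma~\ref{lem:cm-dpde}~\emph{(2.a.ii)} using $z_1$ itself as the upper-solution. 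Both routes are fine.

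Your part \emph{(ii)} is correct but genuinely different from the paper's. The paper bounds $\beta(c)=z_c(1)$ directly by integrating $\eqref{first order problem}_1$ for the extremal solution $z_c$: on $(\delta,1)$ one has $q(\phi)/(-z_{c_1}(\phi))\leq M$ for a fixed comparison solution $z_{c_1}$, giving $\beta(c)\le f(1)-f(\delta)+(M-c)(1-\delta)\to-\infty$. You instead exhibit, for each $A>0$ and all $c$ large, an explicit linear strict upper-solution $\eta(\phi)=-A\phi$; the solution with $z(1)=-A$ is then squeezed between $\eta$ and $0$, forcing $z(0^+)=0$, hence $-A\in\mathcal{A}_c$ and $\beta(c)\leq -A$. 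Your construction is self-contained (it does not need to reason about the extremal $z_c$, only to produce \emph{some} admissible value far below), and it makes transparent exactly where the hypothesis $\limsup_{\phi\to0^+}q(\phi)/\phi<\infty$ enters — to keep $q(\phi)/(A\phi)$ bounded near $0$. The paper's argument, by contrast, gives a quantitative upper bound on $\beta(c)$ itself, which is slightly more informative but requires the auxiliary comparison $z_c<z_{c_1}$. Either way the conclusion follows; no gaps.
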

\begin{proof}
To prove {\em (i)}, let $z_1$ be a solution of \eqref{first order problem} corresponding to $c=c_1$ and such that $z_1(1)=b_1\in\mathcal{A}_{c_1}$. As a consequence of Lemma \ref{lem:cm-dpde} {\em (1.a)}, the problem
\begin{equation*}
\begin{cases}
\dot{z}(\phi)=h(\phi)-c_2 - \frac{q(\phi)}{z(\phi)}, \ \phi \in (0,1),\\
z(1)=b_1<0,
\end{cases}
\end{equation*}
admits a (unique) solution $z_2$ defined in $[0,1]$. From the monotonicity of solutions given by Corollary \ref{cor:monotonicity}, we have $z_1 < z_2<0$ in $(0,1)$.
Since $z_1(0)=0$, then we have $z_2(0)=0$. Thus, $\mathcal{A}_{c_1}\subseteq \mathcal{A}_{c_2}$ and hence $\beta(c_1)\geq \beta(c_2)$. To prove $\beta(c_1)> \beta(c_2)$ we argue as follows.
\par
For any $\phi_0 \in (0,1)$ we can repeat the same arguments as in {\em Step (i)} of Proposition \ref{prop: first order ii}, by replacing $c$ with $c_2$ and $z^*$ with $z_1$ in \eqref{Ac not empty}. Thus, the problem
\begin{equation*}
\begin{cases}
\dot{z}(\phi)=h(\phi) - c_2 - \frac{q(\phi)}{z(\phi)}, \ \phi\in (0,1),\\
z(\phi_0)=z_1(\phi_0) < 0,
\end{cases}
\end{equation*}
admits a unique solution $\hat{z}_{c_2}$ defined in $[0,1]$, because necessarily any solution of the last problem must be bounded from above by $z_2$, see Figure \ref{f:figurestep3} on the right.. Moreover, by applying Lemma \ref{lem:cm-dpde} {\em (2.b.ii)}, $\hat{z}_{c_2} <z_1$ in $(\phi_0,1)$, which implies that $\hat{z}_{c_2}(1)<z_1(1)$, since
$$
\dot{\hat{z}}_{c_2}(\phi)-\dot{z_1}(\phi)=c_1-c_2 +\frac{q(\phi)}{z_1(\phi)\hat{z}_{c_2}(\phi)}\left(\hat{z}_{c_2}(\phi)-z_1(\phi)\right) <0 \ \mbox{ for any } \ \phi \in (\phi_0, 1).
$$
Since $\beta(c_2)\leq \hat{z}_{c_2}(1)<z_1(1)=b_1$ then we proved {\em (i)} since $b_1$ is arbitrary in $\mathcal{A}_{c_1}$.
\par
Finally, we prove {\em (ii)}. For $c>c^*$, let $z_c$ be the solution of \eqref{first order problem} such that $z_c(1)=\beta(c)$. For any fixed $c_1>c^*$, we have $z_c < z_{c_1}$ in $(0,1)$, if $c>c_1$. Thus, for any $c>c_1$,
$$
\dot{z}_c(\phi) = h(\phi)-c + \frac{q(\phi)}{-z_c(\phi)} < h(\phi)-c +\frac{q(\phi)}{-z_{c_1}(\phi)}, \ \phi \in (0,1).
$$
In particular, since $z_{c_1}<0$ in $(0,1]$, then, for any $0<\delta < 1$, there exists $M>0$ such that $q(\phi)/(-z_{c_1}(\phi)) \leq M$ for any $\phi \in (\delta,1]$. Thus, for any $\phi \in (\delta, 1)$,
$$
z_c(\phi)\leq z_c(\delta) + f(\phi) - f(\delta)+ \left(M - c\right)(\phi - \delta) < f(\phi) - f(\delta) + \left(M-c\right)(\phi - \delta),
$$
which implies $\beta(c)=z_c(1) \leq f(1) - f(\delta) + (M-c)(1-\delta)$. This proves {\em {(ii)}}.
\end{proof}

We now collect some consequences of \eqref{e:bound beta} and Lemma \ref{lem:new above est c*}, concerning a sharper estimate to $c^*$. To the best of our knowledge these estimates are new and we provide some comments.

\begin{corollary}
\label{cor:c*}
Assume {\em (q)}. It holds that
\begin{equation}
\label{e:c*1}
c^*\ge
\max\left\{\sup_{\phi \in (0,1]} \frac{f(\phi)}{\phi}, h(0) + 2\sqrt{\liminf_{\phi \to 0^+} \frac{q(\phi)}{\phi}} \right\}.
\end{equation}
\end{corollary}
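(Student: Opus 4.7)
The plan is to split the maximum into the two bounds and treat them separately. The bound $c^* \ge h(0) + 2\sqrt{\liminf_{\phi\to 0^+} q(\phi)/\phi}$ is already contained in the estimate \eqref{estimates on c*} established in Proposition \ref{prop: first order}; so it only remains to prove that
\[
c^* \ge \sup_{\phi \in (0,1]} \frac{f(\phi)}{\phi}.
\]

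To handle this, I would use exactly the computation already carried out in \emph{Step (iii)} of the proof of Proposition \ref{prop: first order ii}. Fix any $c \ge c^*$ and let $z=z^*$ be the solution of \eqref{first order problem 0-0} provided by Proposition \ref{prop: first order}. Since $q(\sigma)/z(\sigma) < 0$ on $(0,1)$, the differential equation $\eqref{first order problem 0-0}_1$ yields $\dot{z}(\sigma) > h(\sigma) - c$ on $(0,1)$. Integrating from $0$ to $\phi \in (0,1)$ and recalling $z(0)=0$ and $f(0)=0$, we obtain the pointwise bound
\[
z(\phi) > f(\phi) - c\phi, \qquad \phi \in (0,1).
\]
Combined with the sign condition $z(\phi) < 0$ on $(0,1)$, this gives $c > f(\phi)/\phi$ for every $\phi \in (0,1)$. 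To cover the endpoint $\phi = 1$, I pass to the limit $\phi \to 1^-$ in the same inequality and use $z(1)=0$, obtaining $0 \ge f(1) - c$, i.e.\ $c \ge f(1)/1$.

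Taking the supremum over $\phi \in (0,1]$ therefore yields $c \ge \sup_{\phi \in (0,1]} f(\phi)/\phi$ for every $c \ge c^*$. Choosing $c = c^*$ concludes the proof. There is no genuine obstacle here: the argument is essentially a repackaging of \eqref{e:bound beta}, applied now to the endpoint condition $z(1)=0$ in \eqref{first order problem 0-0} rather than to an arbitrary $z(1) = b < 0$ in \eqref{first order problem}.
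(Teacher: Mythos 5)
Your proposal is correct and takes essentially the same route as the paper: both reduce the $\sup_{\phi\in(0,1]}f(\phi)/\phi$ bound to the integral estimate $z(\phi)\ge f(\phi)-c\phi$ from Step (iii) of Proposition \ref{prop: first order ii} together with the sign condition $z<0$, and both read off the other lower bound directly from \eqref{estimates on c*}. The only cosmetic difference is that you invoke the solution of \eqref{first order problem 0-0} and use $z(1)=0$ to handle $\phi=1$, while the paper lets continuity of $f$ take care of that endpoint.
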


\begin{proof}
Formula \eqref{e:bound beta} in {\em Step (iii)} implies that $f(\phi) < c\phi$, for $\phi \in (0,1)$. Thus, $f(\phi)\le c^* \phi$, for $\phi \in (0,1)$. This implies $c^*\ge \sup_{\phi \in (0,1]} \frac{f(\phi)}{\phi}$, which, together with \eqref{estimates on c*} implies \eqref{e:c*1}.
\end{proof}

\begin{remark}
\label{rem:MP}
{ \rm
Lemma \ref{lem:new above est c*} and Corollary \ref{cor:c*} imply that, under (q), the threshold $c^*$ verifies \eqref{estimates on c* new}. Moreover, make the assumption  $\dot{q}(0)=0$, which is valid if $q=Dg$ under (D1), with $D(0)=0$, (g0) or under (D0) and (g01). In this case, the estimates in \eqref{e:est c*3} hold true. Indeed, the assumptions on $q$ are covered by \cite[Theorem 3.1]{Marcelli-Papalini} and hence it follows that
\[
c^* \le \sup_{\phi\in(0,1]} \frac{f(\phi)}{\phi} + 2 \sqrt{\sup_{\phi \in (0,1]} \frac1{\phi}\int_{0}^{\phi}\frac{q(\sigma)}{\sigma}\,d\sigma}.
\]
The bound from above in \eqref{e:est c*3} is then proved. The bound from below in \eqref{e:est c*3} is instead due directly to \eqref{e:c*1}, because of $\dot{q}(0)=0$.
}
\end{remark}

\begin{remark}
\label{r:gap}
{\rm
We can now make precise the statement following formula \eqref{e:zaz} about the gap between $c_{con}$ and $c^*$. If $c_{con}$ is obtained at some $\phi \in (0,1]$, then the $\sup$ in the right-hand side of \eqref{e:zaz} is strictly larger than $c_{con}$ because $z<0$ in $(0,1)$. Then $c^*>c_{con}$. Otherwise, if $\sup_{\phi\in (0,1]}f(\phi)(\phi)=h(0)$, then $c_{con} =h(0)$ and by \eqref{e:c*1} we still deduce $c^* >c_{con}$.
}
\end{remark}
\section{Further existence and non-existence results}
\setcounter{equation}{0}
Propositions \ref{prop: first order} and \ref{prop: first order ii} completely treat the existence of solutions of \eqref{first order problem 0-0} and \eqref{first order problem}, respectively, in the cases $c\ge c^*$ and $c>c^*$. In this section, we investigate the remaining cases and show that such propositions are somehow optimal.

We now deal with the following problem, where $c\in\R$ but, differently from \eqref{first order problem}, the boundary condition is imposed on the {\em right} extremum of the interval of definition:
\begin{equation}
\label{f.o. problem 2}
\begin{cases}
\dot{\zeta}(\phi)=h(\phi)-c- \frac{q(\phi)}{\zeta(\phi)}, \ \phi\in (0,1),\\
\zeta(\phi)<0, \ \phi \in (0,1),\\
\zeta(1)=0.
\end{cases}
\end{equation}
The differential equation in \eqref{first order problem} and \eqref{f.o. problem 2} is the same; it inherits the properties of the dynamical system underlying \eqref{e:ODE}. For slightly more regular functions $g$, the dynamical system has a center or a node at $(0,0)$ and a saddle at $(1,0)$. The corresponding results, Proposition \ref{prop: first order ii} and Lemma \ref{lem:right pb}, differ as in Lemma \ref{lem:cm-dpde} {\em (1)}.

Moreover, while in problem \eqref{first order problem} the threshold $c^*$ discriminated the existence of solutions, for problem \eqref{f.o. problem 2} solutions will be proved to exist {\em for every} $c\in\R$; instead, the threshold $c^*$ enters into the problem to discriminate whether solutions reach $0$ or not (see Figure \ref{f:llab2}). A related behavior was pointed out in \cite[Theorem 2.6]{CM-DPDE}. On the contrary, the monotonicity properties stated in Corollary \ref{cor:monotonicity} and in Lemma \ref{lem:right pb} are the same.

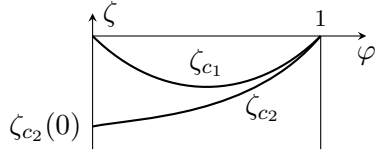
\begin{figure}[htb]
\begin{center}
\begin{tikzpicture}[>=stealth, scale=0.6]
\draw[->] (0,0) --  (6,0) node[below]{$\phi$} coordinate (x axis);
\draw[->] (0,0) -- (0,0.5) node[right]{$\zeta$} coordinate (y axis);
\draw (0,0) -- (0,-2.5);
\draw (5,0) -- (5,-2.5);
\draw[thick] (0,0) .. controls (1.5,-1.5) and (3.5,-1.5) .. node[midway, above]{$\zeta_{c_1}$} (5,0) ; 
\draw[thick]  (0,-2) node[left]{$\zeta_{c_2}(0)$} .. controls (1,-1.8) and (3.5,-1.8) .. node[near end, below]{$\zeta_{c_2}$} (5,0) ; 
\draw (5,0)  node[above]{\footnotesize{$1$}};
\end{tikzpicture}
\end{center}

\caption{\label{f:llab2}{An illustration of Lemma \ref{lem:right pb}. Here, $c_1\geq c^*$ while $c_2<c^*$ and $\zeta_{c_2}(0)<0$.}}
\end{figure}

\begin{lemma}
\label{lem:right pb}
Assume {\em (q)}. For any $c\in \R$, Problem \eqref{f.o. problem 2} admits a unique solution $\zeta_c$. If $c\ge c^*$ then $\zeta_c(0)=0$ and if $c<c^*$ then $\zeta_c(0)<0$. Moreover, we have:
 \begin{enumerate}[(i)]
 \item  if $c_2 >c_1$ then $\zeta_{c_2}>\zeta_{c_1}$ in $(0,1)$;

 \item it holds that $z^*(\phi)=\lim_{c\to c^*} \zeta_{c}(\phi)$ for any $\phi\in [0,1]$.

\end{enumerate}
\end{lemma}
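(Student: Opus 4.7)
The plan is to construct $\zeta_c$ as the monotone limit of regularised solutions. Fix $c\in\R$; for each $n\in\N$, Lemma \ref{lem:cm-dpde} (1.a) yields a unique $\zeta_n\in C^0[0,1]\cap C^1(0,1)$ solving the ODE $\dot\zeta = h-c-q/\zeta$ with $\zeta_n(1)=-1/n$ and $\zeta_n<0$ on $[0,1)$. Cauchy uniqueness on the domain $\{\zeta<0\}$, where the right-hand side is locally Lipschitz, forbids distinct $\zeta_n,\zeta_{n+1}$ from crossing; since $\zeta_n(1)<\zeta_{n+1}(1)$, the sequence $\{\zeta_n\}$ is strictly increasing on $[0,1)$. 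Multiplying $\dot\zeta_n = h-c-q/\zeta_n$ by $\zeta_n$ and integrating from $\phi$ to $1$ produces the closed-form identity
\[
\zeta_n^2(\phi)=\tfrac{1}{n^2}+2\!\int_\phi^1\! q(s)\,ds-2\!\int_\phi^1\!(h(s)-c)\zeta_n(s)\,ds,
\]
and a Gronwall-type argument bounds $\max_{[0,1]}|\zeta_n|$ uniformly in $n$. Pointwise convergence $\zeta_n\to\zeta_c$ together with dominated convergence transfers the identity to $\zeta_c$, so that $\zeta_c\in C^0[0,1]$, $\zeta_c\le 0$, and $\zeta_c(1)=0$.

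The core step is to prove $\zeta_c<0$ on $(0,1)$. Suppose $\zeta_c(\phi_0)=0$ for some $\phi_0\in(0,1)$. Differentiating the limiting integral identity gives $(\zeta_c^2)'(\phi)=-2q(\phi)+2(h(\phi)-c)\zeta_c(\phi)$, which on any subinterval where $\zeta_c\equiv 0$ reduces to $0=-2q(\phi)$, contrary to $q>0$; so $\zeta_c$ cannot vanish identically on any nontrivial subinterval. Consequently there exists a component $(a,b)$ of the open set $\{\zeta_c<0\}$ with $a\ge\phi_0$ and $\zeta_c(a)=0$. Applying Lemma \ref{lem:convergence} on compact subsets of $(a,b)$, with $\zeta_c$ itself providing the required bound $v<0$, shows that $\zeta_c$ solves the ODE classically on $(a,b)$. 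Then $\dot\zeta_c(\phi)\to+\infty$ as $\phi\to a^+$, because $q(a)>0$ and $\zeta_c(\phi)\to 0^-$, so $\zeta_c$ is increasing on a right neighbourhood of $a$; by continuity and $\zeta_c(a)=0$, this forces $\zeta_c(\phi)\ge 0$ there, contradicting $\zeta_c<0$ on $(a,b)$. Hence $\zeta_c<0$ on $(0,1)$ and $\zeta_c$ solves \eqref{f.o. problem 2}.

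Uniqueness then follows from a multiplicative comparison. If $\zeta,\tilde\zeta$ are two solutions of \eqref{f.o. problem 2}, Cauchy uniqueness rules out crossings in $(0,1)$, so we may assume $\zeta<\tilde\zeta$ there. Setting $u:=\tilde\zeta-\zeta>0$ one computes $\dot u=qu/(\zeta\tilde\zeta)$, whence $\dot u/u=q/(\zeta\tilde\zeta)>0$: the function $u$ is strictly increasing on $(0,1)$, incompatible with $u(1^-)=0$. The dichotomy $\zeta_c(0)=0\iff c\ge c^*$ now follows from Proposition \ref{prop: first order}: for $c\ge c^*$ the corresponding solution of \eqref{first order problem 0-0} also solves \eqref{f.o. problem 2}, hence equals $\zeta_c$ by uniqueness; conversely, $\zeta_c(0)=0$ would make $\zeta_c$ a solution of \eqref{first order problem 0-0}, forcing $c\ge c^*$.

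For (i), I set $u:=\zeta_{c_2}-\zeta_{c_1}$ with $c_2>c_1$ and observe that $\dot u=(c_1-c_2)+qu/(\zeta_{c_1}\zeta_{c_2})$ with $u(1)=0$ and $\dot u(1^-)=c_1-c_2<0$, so $u>0$ on a left neighbourhood of $1$. If $\phi^*:=\sup\{\phi\in(0,1):u(\phi)\le 0\}$ lies in $(0,1)$, then $u(\phi^*)=0$ gives $\dot u(\phi^*)=c_1-c_2<0$ and hence $u(\phi^*+\delta)<0$ for small $\delta>0$, contradicting the definition of $\phi^*$. For (ii), monotonicity in $c$ gives $\zeta_c<z^*$ when $c<c^*$ and $\zeta_c>z^*$ when $c>c^*$; along any sequence $c_n\to c^*$, the monotone family $\{\zeta_{c_n}\}$ is bounded on the appropriate side by $z^*\in C^0[0,1]$, which is strictly negative on $(0,1)$, so Lemma \ref{lem:convergence} applies, the limit solves \eqref{f.o. problem 2} at $c^*$, and by uniqueness equals $z^*$. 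The main obstacle is the interior-zero analysis in the second paragraph, which blends continuity inherited from the integral formulation with the blow-up of the ODE near a putative interior zero of $\zeta_c$.
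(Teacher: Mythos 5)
Your construction of $\zeta_c$ is genuinely different from the paper's route. The paper disposes of existence, uniqueness, and the dichotomy at $\phi=0$ by citing \cite[Theorem~2.6]{CM-DPDE}, and of part \emph{(i)} by citing \cite[Lemma~5.1]{CM-DPDE}; only \emph{(ii)} is proved there, using the inequality chain \eqref{e:zeta chain} and Lemma~\ref{lem:convergence}, which is also how you argue. By contrast, you build $\zeta_c$ as the increasing limit of the solutions with $\zeta_n(1)=-1/n$, pass to the limit in the integral identity $\zeta_n^2(\phi)=n^{-2}+2\int_\phi^1 q-2\int_\phi^1(h-c)\zeta_n$, rule out an interior zero $\phi_0\in(0,1)$ of $\zeta_c$ by the blow-up $\dot\zeta_c\to+\infty$ at the left endpoint of a component of $\{\zeta_c<0\}$, get uniqueness from $\dot u=qu/(\zeta\tilde\zeta)>0$ being incompatible with $u(1^-)=0$, and derive the dichotomy from Proposition~\ref{prop: first order}. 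This is a clean, self-contained replacement for the citation, and the key steps are sound.

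There is, however, a genuine gap in your proof of \emph{(i)}: the assertion $\dot u(1^-)=c_1-c_2$. That would require $q\,u/(\zeta_{c_1}\zeta_{c_2})\to 0$ as $\phi\to 1^-$, which is a $\tfrac{0\cdot 0}{0\cdot 0}$ indeterminate form and is in general false. Indeed, if $\dot q(1)$ exists and is negative then Proposition~\ref{prop:derivative of z at alpha} gives $\zeta_{c_i}(\phi)\sim\dot\zeta_{c_i}(1)(\phi-1)$ with \emph{distinct} positive slopes for $c_1\neq c_2$, and a short computation yields $q\,u/(\zeta_{c_1}\zeta_{c_2})\to \dot q(1)\bigl(\dot\zeta_{c_2}(1)-\dot\zeta_{c_1}(1)\bigr)/\bigl(\dot\zeta_{c_1}(1)\dot\zeta_{c_2}(1)\bigr)\neq 0$. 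Thus $\dot u(1^-)\neq c_1-c_2$ in general, and your first sentence does not yet rule out $\phi^*=\sup\{\phi\in(0,1):u(\phi)\le 0\}=1$, which your interior argument at $\phi^*$ needs. The repair is the same idea you already use: if $u(\phi_0)\le 0$ for some $\phi_0\in(0,1)$, pass (after one infinitesimal step if $u(\phi_0)=0$, where $\dot u(\phi_0)=c_1-c_2<0$) to a component $(\alpha,\beta)$ of $\{u<0\}$; there $\dot u<c_1-c_2<0$, so $u$ is strictly decreasing, hence $u(\phi)<u(\phi_0)<0$ for $\phi\in(\phi_0,\beta)$, which contradicts $u(\beta)=0$ (this holds whether $\beta<1$, by the definition of the component, or $\beta=1$, by continuity of $u$ at $1$). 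With this fix, \emph{(i)} and hence the whole lemma is established.

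Two smaller remarks. First, your claim ``$\zeta_n<0$ on $[0,1)$'' should be weakened to $\zeta_n<0$ on $(0,1)$, since Lemma~\ref{lem:cm-dpde}~\emph{(1.a)} does not preclude $\zeta_n(0)=0$ when $c\ge c^*$; this does not affect anything downstream. Second, in the existence step you do not actually need Lemma~\ref{lem:convergence} to see that $\zeta_c$ solves the ODE on a component of $\{\zeta_c<0\}$: the differentiated integral identity $(\zeta_c^2)'=-2q+2(h-c)\zeta_c$ already yields $\dot\zeta_c=h-c-q/\zeta_c$ there, since $\zeta_c=-\sqrt{\zeta_c^2}$ is $C^1$ wherever $\zeta_c\neq 0$.
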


\begin{proof}
The existence and uniqueness was proved in \cite[Theorem 2.6]{CM-DPDE}, while the monotonicity as stated in {\em (i)} was given in \cite[Lemma 5.1]{CM-DPDE}. It remains to prove {\em (ii)}. We show that
$$
\lim_{\delta\to 0^+} \zeta_{c^*-\delta}(\phi)=\lim_{\delta \to 0^+}\zeta_{c^*+ \delta}(\phi)=z^*(\phi) \quad  \hbox{ for } \phi \in [0,1].
$$
For any $\phi \in [0,1]$, by {\em (i)} we have
\begin{equation}
\label{e:zeta chain}
\zeta_{c^*-\delta_2}(\phi) < \zeta_{c^*-\delta_1}(\phi) < z^*(\phi) < \zeta_{c^*+\delta_1}(\phi)<\zeta_{c^*+\delta_2}(\phi) \ \mbox{ for any } \ 0<\delta_1<\delta_2.
\end{equation}
Lemma \ref{lem:convergence} and \eqref{e:zeta chain} imply that there exist two functions $\overline{w}, \underline{w}\in C^0[0,1]\cap C^1\left(0,1\right)$ so that
$\overline{w}(\phi)=\lim_{\delta \to 0^+}\zeta_{c^* + \delta}(\phi)$ and $\underline{w}(\phi)=\lim_{\delta\to 0^+}\zeta_{c^*-\delta}(\phi)$, $\phi \in [0,1]$,
and that both $\underline{w}$ and $\overline{w}$ satisfy \eqref{first order problem0} with $c=c^*$. Since $\underline{w}(1)=\overline{w}(1)=0$, both of them then solve \eqref{f.o. problem 2}. By the uniqueness of solutions of \eqref{f.o. problem 2} it follows that $\underline{w}=\overline{w}=z^*$.
\end{proof}

\begin{remark}
{
\rm
Note that, because of the uniqueness stated in Lemma \ref{lem:right pb}, it follows that, for any $c\ge c^*$, the solution $z$ given by Proposition \ref{prop: first order} corresponds to $\zeta_c$ of Lemma \ref{lem:right pb}. Moreover, for $c<c^*$ fixed, there exists a bound from below for $\zeta_c(0)<0$. We have
$$
\zeta_c(0)\ge -1-  A_c,\quad \hbox{ for }
A_c:=\max\bigl\{\max_{\phi \in [0,1]} h(\phi) -c,0\bigr\}+ \max_{\phi \in [0,1]} q(\phi) >0.
$$
Indeed, the function $\eta(\phi):=A_c\left(\phi - 1\right) -1$, for $\phi \in [0,1]$, is a strict upper-solution of $\eqref{f.o. problem 2}_1$. Therefore, if $\zeta_c(\phi_0)\le \eta(\phi_0)$, for some $\phi_0 \in (0,1)$, then $\zeta_c < \eta$ in $(\phi_0, 1)$ by Lemma \ref{lem:cm-dpde} {\em (2.a.ii)}, which is in contradiction with $\zeta_c(1)=0>\eta(1)$. Thus, $\zeta_c(0)\ge \eta(0) = -A_c-1$. Notice that, for $c \ge \max h$, $A_c=\max q$ does not depend on $c$, while $A_c \to \infty$, as $c\to -\infty$.
}
\end{remark}

We now show that $\beta(c^*)=0$ under some additional conditions. First, we assume (also for future reference) that $\dot{q}(0)$ exists:
\begin{equation}
\label{e:D1g1 at zero}
\dot q(0) = \lim_{\phi \to 0^+} \frac{q(\phi)}{\phi}\in[0,\infty).
\end{equation}

 \begin{proposition}
 \label{prop: first order 3}
Assume {\em (q)}, \eqref{e:D1g1 at zero} and also
\begin{equation}\label{e:c=c^*2e}
\int_{0} \frac{q(\sigma)}{\sigma^2} \, d\sigma< +\infty  \quad \hbox{ and }\quad c^* > h(0).
\end{equation}
Then Problem \eqref{first order problem} with $c=c^*$ admits a unique solution $z$, which satisfies $z(1)=0$.
 \end{proposition}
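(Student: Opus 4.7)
Existence is immediate from Proposition \ref{prop: first order}: since $c^*\ge c^*$, the solution $z^*$ of \eqref{first order problem 0-0} exists and solves \eqref{first order problem} with $z^*(1)=0$, giving both the existence assertion and the boundary value. The substantive content of the statement is thus uniqueness, for which I would argue by contradiction: assuming $z\not\equiv z^*$ is a second solution, Cauchy uniqueness for $\eqref{first order problem}_1$ in the interior $(0,1)$ precludes crossings, so I would take $z<z^*$ on $(0,1)$ without loss of generality.

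The heart of the argument would be to establish a pointwise lower bound of the form $|z(\phi)|,|z^*(\phi)|\ge m\phi$ near $\phi=0$, for some $m>0$. First I would note that \eqref{e:c=c^*2e} combined with \eqref{e:D1g1 at zero} forces $\dot q(0)=0$: indeed, $\dot q(0)=\ell>0$ would yield $q(\sigma)/\sigma^2\sim \ell/\sigma$ near $0$, contradicting integrability. Then, with $\alpha:=h(0)-c^*<0$ (using $c^*>h(0)$), I would test the candidate barrier $\eta_\epsilon(\phi):=(\alpha+\epsilon)\phi$ for small $\epsilon\in(0,|\alpha|)$ and verify that
\begin{equation*}
\dot\eta_\epsilon(\phi)-h(\phi)+c^*+\frac{q(\phi)}{\eta_\epsilon(\phi)}=\bigl(h(0)-h(\phi)\bigr)+\epsilon-\frac{q(\phi)}{(|\alpha|-\epsilon)\phi}\to\epsilon>0
\end{equation*}
as $\phi\to 0^+$, by continuity of $h$ and the fact that $\dot q(0)=0$. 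This makes $\eta_\epsilon$ a strict upper-solution of $\eqref{first order problem}_1$ on some $(0,\phi^*)$. Coupled with the elementary bound \eqref{e:bound beta}, $z(\sigma)\ge \alpha\sigma+o(\sigma)$, so $\eta_\epsilon(\sigma)>z(\sigma)$ for small $\sigma>0$, and Lemma \ref{lem:cm-dpde} {\em (2.a.ii)} propagates this to $\eta_\epsilon>z$ on $(0,\phi^*)$. Hence $|z(\phi)|>(|\alpha|-\epsilon)\phi$; choosing $\epsilon=|\alpha|/2$ gives the sought lower bound, and the same reasoning applies verbatim to $z^*$.

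The final step is a linear-ODE uniqueness argument for the difference $w:=z^*-z$. Subtracting the two equations yields $\dot w=qw/(zz^*)$, a linear equation with $w(0^+)=0$ whose coefficient is positive. The bound $zz^*\ge m^2\phi^2$ near $0$ combined with $\int_0 q(\sigma)/\sigma^2\,d\sigma<+\infty$ makes this coefficient integrable at the origin; integrating
\begin{equation*}
w(\phi)=w(\phi_0)\exp\left(\int_{\phi_0}^{\phi}\frac{q(\sigma)}{z(\sigma)z^*(\sigma)}\,d\sigma\right)
\end{equation*}
and letting $\phi_0\to 0^+$, the exponential factor tends to a finite limit while $w(\phi_0)\to 0$, forcing $w(\phi)=0$ for each $\phi\in(0,\phi^*)$. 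Cauchy uniqueness at an interior point then propagates $w\equiv 0$ on all of $(0,1)$, contradicting $w>0$.

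The hard part will be producing the lower bound $|z|\ge m\phi$. Both hypotheses $c^*>h(0)$ and $\int_0 q/\sigma^2<+\infty$ are indispensable there: the former ensures the candidate slope $\alpha$ is strictly negative so that $\eta_\epsilon$ is non-trivial, while the latter kills the singular $q(\phi)/\phi$ term in the barrier inequality in the limit $\phi\to 0^+$. Without either, $\eta_\epsilon$ fails to be a strict upper-solution near $0$, and the non-uniqueness of Proposition \ref{prop: first order ii} for $c>c^*$ would persist at $c=c^*$, i.e.\ one would have $\beta(c^*)<0$.
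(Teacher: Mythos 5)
Your overall strategy — existence/boundary value from Proposition~\ref{prop: first order}, then uniqueness via a Gronwall argument for the difference $w=z^*-z$ — is sound and in fact more economical than the paper's own proof, which instead argues by contradiction through the auxiliary families $\zeta_c$ and $z_c^*$ ($c<c^*$), the convergence in Lemma~\ref{lem:right pb}\emph{~(ii)}, and a limiting estimate on $\eta_c=\zeta_c-z_c^*$ as $c\to c^*$. Both proofs ultimately pivot on the same analytic fact, namely that $q/(z z^*)$ is integrable at $\phi=0$ once one knows $z^*(\phi)\sim(h(0)-c^*)\phi$ near $0$ and $\eqref{e:c=c^*2e}_1$ holds, so your direct comparison is a genuine simplification.

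However, there is a real gap in your argument for the lower bound $|z(\phi)|\ge m\phi$. You write that \eqref{e:bound beta} gives $z(\sigma)\ge\alpha\sigma+o(\sigma)$ (with $\alpha=h(0)-c^*<0$), ``so $\eta_\epsilon(\sigma)>z(\sigma)$ for small $\sigma>0$.'' This implication is reversed: \eqref{e:bound beta} is a \emph{lower} bound on $z$, i.e.\ an \emph{upper} bound $|z(\sigma)|\le(|\alpha|+o(1))\sigma$, which is exactly the opposite of what you need. It is perfectly consistent with \eqref{e:bound beta} that $z(\sigma)>(\alpha+\epsilon)\sigma$ near $0$ (for instance $z(\sigma)=\tfrac{\alpha}{2}\sigma$ would satisfy $z\ge\alpha\sigma$), so the comparison in Lemma~\ref{lem:cm-dpde}~\emph{(2.a.ii)} cannot be started, and the barrier construction as written does not deliver $\eta_\epsilon>z$ near $0$. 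What is actually needed is an \emph{upper} bound $z(\sigma)\le(\alpha+\epsilon)\sigma$, and neither \eqref{e:bound beta} nor a linear barrier of the form $(\alpha\pm\epsilon)\phi$ produces it without additional information.

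The correct input here is Proposition~\ref{prop:dotz at zero}, whose hypotheses (q) and \eqref{e:D1g1 at zero} are part of the statement you are proving and whose proof does not depend on the present proposition: under $\dot q(0)=0$ (forced, as you note, by $\eqref{e:c=c^*2e}_1$) and $c=c^*$, it gives $\dot z(0)=s_-(c^*)=h(0)-c^*=\alpha<0$ for \emph{every} solution $z$ of \eqref{first order problem}. (This is precisely where $\eqref{e:c=c^*2e}_2$, i.e.\ $c^*>h(0)$, enters.) From $z(\phi)=\alpha\phi+o(\phi)$ and $z^*(\phi)=\alpha\phi+o(\phi)$ you immediately get $z(\phi)z^*(\phi)=\alpha^2\phi^2+o(\phi^2)$, hence $q/(zz^*)$ integrable at $0$ by $\eqref{e:c=c^*2e}_1$, and your final integrating-factor argument forces $w\equiv0$ — contradiction. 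With this substitution for the barrier step, your proof is correct, and the barrier construction and the appeal to \eqref{e:bound beta} can be dropped entirely. (The paper uses the same asymptotic, citing \cite[Proposition~5.2]{CM-DPDE}, but wraps it inside the more indirect $\zeta_c$--$z_c^*$ scheme.)
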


Notice that $\eqref{e:c=c^*2e}_1$ above strengthens the last condition in (q) and is satisfied if $\dot q(\phi)=O(\phi^\alpha)$ for $\phi\to0^+$, for some $\alpha>0$; in any case it implies $\dot q(0)=0$ by \eqref{e:D1g1 at zero}.

\begin{figure}[htb]
\begin{center}
\begin{tikzpicture}[>=stealth, scale=0.6]
\draw[->] (0,0) --  (6,0) node[below]{$\phi$} coordinate (x axis);
\draw[->] (0,0) -- (0,0.5) node[right]{$z$} coordinate (y axis);
\draw (0,0) -- (0,-3.5);
\draw (5,0) -- (5,-3.5);
\draw[thick] (0,0) .. controls (1.5,-1) and (2.5,-1) .. node[midway, above]{$z^*$} (5,0) ; 
\draw[thick] (0,-2) node[left]{\footnotesize{$\zeta_c(0)$}} .. controls (1.5,-1.8) and (4,-1) .. node[very near end, below]{$\zeta_c$} (5,0); 
\draw[thick] (0,0) .. controls (1.5,-1) and (2.5,-1.8) .. node[near start, below]{$y^*$} (5,-2.2) ; 
\draw[thick] (0,-2) .. controls (1,-2.4) and (2.5,-2.9) .. node[midway, below]{$z_c^*$} (5,-3.3) ; 

\draw (5,0)  node[above]{\footnotesize{$1$}};
\end{tikzpicture}
\end{center}

\caption{\label{f:z^*zeta_c}{The functions $z^*$, $\zeta_c$, $y^*$ and $z_c^*$, for $c<c^*$. }}
\end{figure}
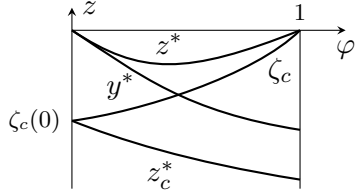
 \begin{proofof}{Proposition \ref{prop: first order 3}}
Suppose, by contradiction, that there exists $y^*$ which solves \eqref{first order problem} with $c=c^*$ and $y^*(1)<0$; observe that
\begin{equation}\label{e:zeta*y*}
z^* > y^*\quad \hbox{ in } (0,1].
\end{equation}
We show that $y^*$ is an upper bound for the family of functions $\{z_c^*\}_{c<c^*}$ defined as follows, see Figure \ref{f:z^*zeta_c}. For any $c<c^*$, let $\zeta_c$ be the solution of \eqref{f.o. problem 2}, given in Lemma \ref{lem:right pb}. Consider the initial-value problem
\begin{equation}
\label{initial-value problem}
\begin{cases}
\dot{z}(\phi)=h(\phi)-c^*-\frac{q(\phi)}{z(\phi)},\ \phi\in(0,1),\\
z(0)=\zeta_c(0)<0.
\end{cases}
\end{equation}
By Lemma \ref{lem:cm-dpde} {\em (1.b)}, problem \eqref{initial-value problem} admits a unique solution $z_c^*$ in $[0, \delta]$ for some $\delta \leq 1$. Moreover, since $z_c^*(0)<0$ and $z_c^*$ satisfies \eqref{initial-value problem}, then $z_c^* < y^*$ in $[0,\delta)$. Thus, if $\delta < 1$ then we have $-\infty<z_c^*(\delta)< y^*(\delta)< 0$; again by Lemma \ref{lem:cm-dpde} {\em (1.b)} we deduce $\delta=1$. Then
\begin{equation}
\label{e:y*}
y^*>z_c^* \ \mbox{ in } \ [0,1).
\end{equation}

By both Lemma \ref{lem:right pb} {\em (ii)} and \eqref{e:y*} we now find a contradiction, which implies that such a $y^*$ cannot exist. For this, for any $c<c^*$, define $\eta_c$ by
$$
\eta_c(\phi)=\zeta_c(\phi) - z_c^*(\phi), \quad \phi\in[0,1].
$$
Since $z_c^*$ is a strict lower-solution of $\eqref{first order problem}_1$, then Lemma \ref{lem:cm-dpde} {\em (2.b.ii)} implies $\eta_c>0$ in $(0,1)$. We claim that, for any fixed $\phi_0\in (0,1]$,  $\eta_c(\phi_0)$ is uniformly bounded from below for $c$ close to $c^*$. Indeed, for any  $0<\delta<(z^*-y^*)(\phi_0)$, we clearly have, by \eqref{e:y*} and \eqref{e:zeta*y*},
\[
\eta_c(\phi_0) > \zeta_c(\phi_0) - y^*(\phi_0) =
\left(\zeta_c - z^*\right)(\phi_0) + \left(z^*- y^*\right)(\phi_0) > \left(\zeta_c - z^*\right)(\phi_0) + \delta.
\]
Thus, in virtue of Lemma \ref{lem:right pb} {\em (ii)}, for any $c$ sufficiently close to $c^*$, we have
\begin{equation}
\label{e:etan 1}
\eta_c(\phi_0) \geq \frac{\delta}{2}>0,
\end{equation}
which proves our claim. On the other hand, define $k=k(\phi)>0$ by
$$
k(\phi):=\frac{q(\phi)}{\left(z^* y^*\right)(\phi)}, \quad \phi \in(0,1).
$$
From assumptions \eqref{e:D1g1 at zero} and $\eqref{e:c=c^*2e}_2$ we deduce $\dot{z}^*(0)=h(0)-c^*<0$ because of \cite[Proposition 5.2]{CM-DPDE}. Also, by \eqref{e:zeta*y*} we deduce that $y^*z^*> {z^*}^2$ in $(0,1]$. Thus,
$$
k(\phi) < \frac{q(\phi)}{\phi^2} \left(\frac{\phi}{z^*(\phi)}\right)^2 = \frac{q(\phi)}{\phi^2} \left\{\frac1{\left(c^* - h(0) \right)^2} + o(1)\right\} \ \mbox{ for } \ \phi \to 0^+.
$$
This leads to
\[
\int_{0}^{\phi_0} k(\sigma)\,d\sigma=:M  < +\infty
\]
by means of $\eqref{e:c=c^*2e}_1$. Since $\zeta_c$ and $z_c^*$ satisfy $\eqref{first order problem}_1$ with $c<c^*$ and $c=c^*$, respectively, and since $\zeta_c z_c^* > z^* y^*$ by the monotonicity stated in Lemma \ref{lem:right pb} and \eqref{e:y*}, then
\[
\dot{\eta_c}(\phi)= c^*-c- \frac{q(\phi)}{\zeta_c(\phi) z_c^*(\phi)}\left(z_c^*(\phi)-\zeta_c(\phi)\right) <
c^*-c + k(\phi)\eta_c(\phi),
\]
for $\phi\in (0,1)$. After some straightforward manipulations, this gives
$$
\frac{d}{d\phi}\left(\eta_c(\phi) e^{-\int_{0}^{\phi} k(\sigma)\,d\sigma}\right) \leq \left(c^*-c\right)e^{-\int_{0}^{\phi} k(\sigma)\,d\sigma}, \ \phi\in (0,1).
$$
By integrating in $(0,\phi_0)$ (where $\phi_0$ is the point for which \eqref{e:etan 1} holds) we obtain
\begin{equation}
\label{e:eta_c v2}
0< \eta_c(\phi_0)\leq  \left(c^* - c\right)e^{\int_{0}^{\phi_0} k(\sigma)\,d\sigma}\int_{0}^{\phi_0} e^{-\int_{0}^{\sigma} k(s)\,ds}\,d\sigma\le \left(c^*-c\right) e^M \phi_0,
\end{equation}
since $e^{-\int_{0}^{\sigma} k(s)\,ds} \le 1$, for any $0<\sigma < \phi_0$, because of $k>0$. Since $M$ does not depend on $c$, from \eqref{e:eta_c v2}, we conclude that $\eta_c(\phi_0) \to 0$, for $c\to c^*$. This contradicts \eqref{e:etan 1}.
 \end{proofof}

We notice that if $q=Dg$, with $D\in C^1[0,1]$, then $\eqref{e:c=c^*2e}_1$ follows if we have both
$D(0)=0$ and there exists $L\ge 0$ such that $g(\phi)\le L \phi^\alpha$ for any $\phi$ in a right neighborhood of $0$ and some $\alpha>0$. The next remark deals with $\eqref{e:c=c^*2e}_2$.

 \begin{remark}
 \label{rem:c*=0}
{
\rm
First, from \eqref{estimates on c*}, we have $c^*\geq \sup_{\phi \in (0,1]} \frac{f(\phi)}{\phi} \ge h(0)$. We show that the case $c^*=h(0)$ can indeed occur and then $\eqref{e:c=c^*2e}_2$ is a real assumption.  Set, for $\phi\in (0,1)$,
\begin{equation}\label{e:qhrem}
q(\phi)=\phi^3\left(1 - \phi\right), \ h(\phi)=3\phi\left(\phi- 1\right),
\end{equation}
and $z(\phi)=\phi^2\left(\phi - 1\right)$. Direct computations show that $z$ satisfies \eqref{first order problem} with $c=0=h(0)$. Hence, $c^*=h(0)$, because of $c^*\geq h(0)$.
\par
Second, in the spirit of \cite[Theorems 1.2 and 1.3]{MMconv}, which concerns a similar case, we claim that $\eqref{e:c=c^*2e}_2$ occurs if there exists $\delta >0$ such that $h(\phi)\geq h(0)$ for all $\phi \in [0,\delta]$.
Indeed, if $z$ is a solution of \eqref{first order problem} with $c=c^*$, then from $\eqref{first order problem}_1$ we have $\dot{z}(\phi) > h(\phi)-c^* \geq h(0)-c^*$, for $\phi \in (0,\delta)$.
This implies $h(0)-c^* \leq \inf_{\phi \in (0,\delta)}\dot{z}(\phi)<0$, because of $\eqref{first order problem}_2$ and $\eqref{first order problem}_3$, which proves our claim.
\par
Lastly, we show by a counter-example that the conclusion of Proposition \ref{prop: first order 3} fails when $\eqref{e:c=c^*2e}_1$ holds but $\eqref{e:c=c^*2e}_2$ {\em does not}. Consider, for $\phi \in [0,1]$,
$q(\phi)=\phi^4\left(1-\phi\right)$ and $y^*(\phi)=-\phi^2$.
Clearly, $y^*<0$ in $(0,1)$ and $y^*(0)=0$. Furthermore, we have
\[
\dot{y}^*(\phi) +\frac{q(\phi)}{y^*(\phi)} = -2\phi -\phi^2\left(1-\phi\right),  \ \phi \in (0,1).
\]
This implies that $y^*$ satisfies $\eqref{first order problem}_1$ with
$h(\phi)=-2\phi -\phi^2\left(1-\phi\right)$ and $c=0$. As a consequence, by $c^*\ge h(0)=0$, we deduce $c^*=h(0)=0$. Thus, we proved that there exists $q$ satisfying $\eqref{e:c=c^*2e}_1$ such that \eqref{first order problem} with $c=c^*=h(0)$ admits a solution $y^*\neq z^*$.
}
\end{remark}

 \begin{proposition}
 \label{prop: first order 4}
 Assume {\em (q)}. For no $c<c^*$ problem \eqref{first order problem} admits solutions.
 \end{proposition}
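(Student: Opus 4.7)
The plan is to proceed by contradiction: suppose some $z_c$ solves \eqref{first order problem} for a value $c<c^*$ and derive a conflict by comparison with the ``right'' problem \eqref{f.o. problem 2}. By Lemma \ref{lem:zlimit} such a $z_c$ extends continuously to $[0,1]$ with $z_c(0)=0$ and $z_c(1)\in(-\infty,0]$, which naturally splits the analysis into two subcases according to whether $z_c(1)$ vanishes or is strictly negative. The easy subcase $z_c(1)=0$ is dispatched in one line: in that situation $z_c$ would solve the two-sided problem \eqref{first order problem 0-0} for $c<c^*$, contradicting the sharp threshold supplied by Proposition \ref{prop: first order}.

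In the main subcase, $z_c(1)<0$, I would invoke Lemma \ref{lem:right pb} to produce the unique $\zeta_c$ solving \eqref{f.o. problem 2}; since $c<c^*$, the same lemma forces $\zeta_c(0)<0$. The sign of $z_c-\zeta_c$ is then reversed at the two endpoints of $[0,1]$: strictly positive at $\phi=0$ and strictly negative at $\phi=1$. Continuity therefore yields a crossing point $\phi_0\in(0,1)$ with $z_c(\phi_0)=\zeta_c(\phi_0)<0$. The contradiction then comes from local uniqueness for the Cauchy problem associated with $\eqref{first order problem}_1$: the right-hand side $h(\phi)-c-q(\phi)/z$ is $C^1$, hence locally Lipschitz in $z$, on the open half-plane $\{z<0\}$, so $z_c$ and $\zeta_c$ must coincide in a neighborhood of $\phi_0$ and, by standard continuation, throughout $(0,1)$, in open conflict with $z_c(0)=0\ne\zeta_c(0)$.

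The only delicate point is verifying that Cauchy uniqueness is legitimately applicable at the crossing value $\phi_0$, which requires both functions to lie strictly in $\{z<0\}$ there; this is automatic from the sign conditions $\eqref{first order problem}_2$ and $\eqref{f.o. problem 2}_2$. Everything else is routine bookkeeping on the continuous extensions supplied by Lemma \ref{lem:zlimit}, and no refined estimate on $c^*$ nor any of the more technical arguments of Section \ref{s:sing1} is needed.
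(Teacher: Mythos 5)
Your argument is correct and follows essentially the same route as the paper: invoke Lemma \ref{lem:right pb} to get the right-boundary solution $\zeta_c$ with $\zeta_c(0)<0$ when $c<c^*$, locate a crossing $\phi_0\in(0,1)$ where $z_c(\phi_0)=\zeta_c(\phi_0)<0$, and conclude from Cauchy uniqueness for $\eqref{first order problem}_1$ (the right-hand side being locally Lipschitz in $z$ on $\{z<0\}$). The only difference is cosmetic: you isolate the subcase $z_c(1)=0$ and dispose of it directly via Proposition \ref{prop: first order}, whereas the paper leaves the existence of an interior crossing implicit; your version is slightly more explicit on that point but the mechanism is identical.
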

 \begin{proof}
 Take $c<c^*$ and assume by contradiction that problem \eqref{first order problem} has a solution $z$.  If $\zeta=\zeta_c$ is the solution to \eqref{f.o. problem 2} given by Lemma \ref{lem:right pb}, then $\zeta(0)<0$, by Proposition \ref{prop: first order}. Then $\zeta(\varphi_0)=z(\varphi_0)=:y_0<0$, for some $\varphi_0\in (0,1)$; see Figure \ref{f:fzz2}. This contradicts the uniqueness of the Cauchy problem associated to $\eqref{f.o. problem 2}_1$. The proof is concluded.

\begin{figure}[htb]
\begin{center}
\begin{tikzpicture}[>=stealth, scale=0.6]

\draw[->] (0,0) --  (6,0) node[below]{$\phi$} coordinate (x axis);
\draw[->] (0,0) -- (0,0.5) node[right]{$z$} coordinate (y axis);
\draw (0,0) -- (0,-3.4);
\draw[thick] (0,0) .. controls (2.5,-4) and (3.5,-4) .. node[near end, below]{$z$} (5,-0.8) ; 
\draw[thick] (0,-1.5) node[left]{\footnotesize{$\zeta(0)$}} .. controls (1.5,-2.5) and (3.5,-2.5) .. node[very near start, below]{$\zeta$}  (5,0) ; 

\draw[dotted] (0,-0.8) node[left]{\footnotesize{$z(1)$}}-- (5,-0.8);
\draw (5,0)  node[above]{\footnotesize{$1$}};
\draw[dotted] (1.5,0) node[above]{\footnotesize{$\phi_0$}} -- (1.5,-2.2);
\draw[dotted] (5,0) -- (5,-0.8);

\end{tikzpicture}
\end{center}

\caption{\label{f:fzz2}{The functions $z$ and $\zeta$.}}
\end{figure}
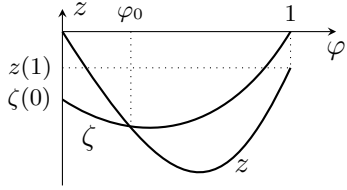

 \end{proof}

 \section{The behavior of $z$ near $1$}
 \label{sec:further properties}
\setcounter{equation}{0}

In this section and in the next one we investigate the behavior of the solutions $z$ to \eqref{first order problem} at $1$ and $0$. We now deal with the former case. We suppose that, analogously to \eqref{e:D1g1 at zero},
\begin{equation}\label{e:dotq-ex-fin}
\dot{q}(1)\in(-\infty,0].
\end{equation}

\begin{proposition}
\label{prop:derivative of z at alpha}
Assume {\em (q)} and \eqref{e:dotq-ex-fin}; consider $c\geq c^*$ and let $z$ be a solution of \eqref{first order problem}. Then, $\dot{z}(1)$ exists and it holds that
\begin{enumerate}[(i)]
\item if $z(1) \in [\beta, 0)$, then
$\dot{z}(1)=h(1)-c$;
\item if $z(1)=0$, then
\begin{equation}
\label{derivative of z at alpha}
\dot{z}(1) =
\left\{
\begin{array}{ll}
\frac1{2}\left[ h(1) - c + \sqrt{\left(h(1) - c\right)^2 - 4\dot{q}(1)} \right]
& \hbox{ if }\dot{q}(1)<0,
\\
\max\left\{0,h(1)-c\right\} & \hbox{ if }\dot{q}(1)=0.
\end{array}
\right.
\end{equation}
\end{enumerate}
\end{proposition}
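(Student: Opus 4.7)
Part $(i)$ is straightforward. When $z(1) \in [\beta, 0)$, the right-hand side of $\eqref{first order problem}_1$ extends continuously to $\phi = 1$: indeed $z$ stays bounded away from zero near $1$, and $q(1) = 0$, so $q(\phi)/z(\phi) \to 0$ as $\phi \to 1^-$. Thus $\lim_{\phi \to 1^-}\dot{z}(\phi) = h(1) - c$, and a standard consequence of the mean value theorem (applied to the continuous extension of $z$ at $1$) gives $\dot{z}(1) = h(1) - c$.

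Part $(ii)$, the case $z(1) = 0$, is more delicate, since $q(\phi)/z(\phi)$ is then a $0/0$ indeterminacy. My plan is to analyze the auxiliary function $w(\phi) := z(\phi)/(\phi - 1)$, strictly positive on a left neighborhood of $1$, and show it converges to a specific root $\lambda$ of
\[
F(\lambda) := \lambda^2 - (h(1) - c)\lambda + \dot{q}(1) = 0.
\]
Set $p(\phi) := q(\phi)/(\phi - 1)$, continuous up to $1$ with $p(1) = \dot{q}(1)$ by \eqref{e:dotq-ex-fin}. Substituting $z = (\phi - 1) w$ into $\eqref{first order problem}_1$ and rearranging gives
\[
(1 - \phi)\, w(\phi)\, \dot{w}(\phi) \;=\; w(\phi)^2 - (h(\phi) - c)\, w(\phi) + p(\phi).
\]
If $w(\phi) \to \lambda > 0$, the right-hand side tends to $F(\lambda)$ while the left-hand side vanishes, forcing $F(\lambda) = 0$; this will yield the quadratic in \eqref{derivative of z at alpha}.

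To establish convergence of $w$, I would first show that it is bounded above and below by positive constants near $\phi = 1$. Multiplying $\eqref{first order problem}_1$ by $z$ and integrating on $(\phi, 1)$ yields the identity $z(\phi)^2 = 2 \int_\phi^1 \left[q(s) - (h(s) - c) z(s)\right]ds$; a bootstrap argument, exploiting that $q(s)/(1-s)$ is bounded thanks to \eqref{e:dotq-ex-fin}, gives $|z(\phi)| = O(1 - \phi)$ and hence an upper bound on $w$. The lower bound follows because if $w$ vanished along a sequence then $q(\phi)/|z(\phi)|$ would force $\dot{z}$ to become arbitrarily large, incompatible with that same identity. With these bounds in hand, the rescaling $\tau := -\log(1 - \phi)$ transforms the identity for $w$ into an autonomous ODE $du/d\tau = -F(1,u)/u$ with $\tau \to +\infty$, whose equilibria in $\{u>0\}$ are exactly the positive roots of $F$, and a standard attractor argument singles out a unique limit.

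Finally, one selects the correct root. When $\dot{q}(1) < 0$, the roots $\lambda_\pm$ have opposite signs, so only $\lambda_+ = \tfrac{1}{2}\bigl[(h(1)-c)+\sqrt{(h(1)-c)^2 - 4\dot{q}(1)}\,\bigr]$ is compatible with $w > 0$. When $\dot{q}(1) = 0$, the roots are $0$ and $h(1) - c$; the linearization $G'(u) = -1 + \dot{q}(1)/u^2 = -1$ at each positive equilibrium shows that the attracting equilibrium in $\{u > 0\}$ is precisely $\max\{0, h(1) - c\}$, matching \eqref{derivative of z at alpha}. The main obstacles are the a priori bounds on $w$ (especially the lower one, which I expect to require combining the integral identity with the structural inequality $\dot{z} \ge h - c$) and the correct root selection when $\dot{q}(1) = 0$ and $h(1) - c > 0$, where the repelling equilibrium $u = 0$ must be excluded by a transversality argument in the rescaled variable.
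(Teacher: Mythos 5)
Your proof takes a genuinely different route from the paper. For part \emph{(ii)} the paper does not attempt a dynamical-systems / rescaling analysis: it first asserts that the existence of $\dot z(1)$, as a root of the quadratic $\gamma^2-(h(1)-c)\gamma+\dot q(1)=0$, follows by an argument analogous to \cite[Lemma 2.1]{MMconv} (an external reference), and then selects the correct root $r_+$ by two short observations: $\dot z(1)\ge 0$ always (since $w:=z/(\phi-1)>0$), and when $\dot q(1)=0$ the Mean Value Theorem plus the structural inequality $\dot z(\phi)>h(\phi)-c$ gives $\dot z(1)\ge h(1)-c$, hence $\dot z(1)\ge\max\{0,h(1)-c\}=r_+$. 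Your rescaled variable $w(\phi)=z(\phi)/(\phi-1)$ and the transformed ODE $(1-\phi)w\dot w=w^2-(h(\phi)-c)w+p(\phi)$ are correct (I checked the algebra), and the idea of studying the asymptotically autonomous system $du/d\tau=-F(u)/u$ with $\tau=-\log(1-\phi)$ is a legitimate, more self-contained program, but it is considerably more work than what the paper actually does.

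There is, however, a genuine gap in your plan. You propose to show that $w$ is \emph{bounded above and below by positive constants near $\phi=1$}, but the lower bound is false precisely when $\dot q(1)=0$ and $h(1)-c\le 0$: in that case the statement itself asserts $\dot z(1)=\max\{0,h(1)-c\}=0$, so $w(\phi)\to 0$. Relatedly, you list as an obstacle the ``correct root selection when $\dot q(1)=0$ and $h(1)-c>0$'', but that case is actually benign for your scheme: the autonomous surrogate $du/d\tau=(h(1)-c)-u$ has an attracting equilibrium at $h(1)-c>0$ and $u=0$ is an interior point of the domain with outward flow. The delicate case you should be worried about is $\dot q(1)=0$, $h(1)-c<0$, where the autonomous surrogate has \emph{no} equilibrium in $\{u\ge 0\}$ and its solutions cross $u=0$ in finite $\tau$-time, so the naive attractor picture breaks down entirely and the conclusion $w\to 0$ can only come from the non-autonomous corrections ($p(\phi)/u$ with $p(\phi)\to 0$). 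Finally, passing from the asymptotically autonomous ODE to the limiting autonomous one is not a one-liner ``standard attractor argument''; it requires either an explicit comparison estimate or a Markus-type convergence theorem, and for a proof of this proposition the paper's route — sign constraint $\dot z(1)\ge 0$ plus the MVT inequality $\dot z(1)\ge h(1)-c$, applied after the existence-of-the-limit step — is substantially more economical than the blow-up analysis you sketch.
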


\begin{proof}
In case {\em (i)}, we only need to take the limit for $\phi\to 1^-$ in $\eqref{first order problem}_1$.

In case {\em (ii)}, the proof of the existence of $\dot z(1)$ is analogous to the proof of \cite[Lemma 2.1]{MMconv}, even if in that paper there is the further assumption $\dot q (1)=0$. In the current case, $\dot z(1)$ must coincide with one of the roots of the equation
$\gamma^2 - \left(h(1) - c\right) \gamma + \dot{q}(1)=0$,
which are
$$
r_\pm:= \frac{h(1)-c \pm \sqrt{\left(h(1)-c\right)^2 - 4 \dot{q}(1)}}{2}.
$$
A direct check shows that the right-hand side of \eqref{derivative of z at alpha} corresponds exactly to $r_+$. Thus, if we prove that $\dot z(1) = r_+$ then we conclude the proof.

If $\dot{q}(1)<0$, the fact that $r_ -<0$ implies necessarily that $\mu=r_+$, because of $\dot z(1)\geq 0$.

Let $\dot{q}(1)=0$. Since we do not yet know whether $\dot z$ is continuous at $1$ (see Remark \ref{rem:z c1}), we argue as follows. For any $\phi \in (0,1)$, by the Mean Value Theorem there exists $\sigma_\phi \in (\phi, 1)$ satisfying
$\dot{z}(\sigma_\phi)=\frac{z(\phi)}{\phi-1}$.
By the definition of $\dot{z}(1)$ it then follows that
\begin{equation}
\label{e:sigmaphi}
\lim_{\phi \to 1^-}\dot{z}(\sigma_\phi)=\dot z(1) \ \mbox{ and } \  \lim_{\phi\to 1^-} \frac{z(\sigma_\phi)}{\sigma_\phi - 1}=\dot z(1).
\end{equation}
From $\eqref{first order problem}_1$, the sign conditions in $\eqref{e:q}_2$ and $\eqref{first order problem}_2$ imply that
\begin{equation}
\label{e:dotzsigmaphi}
\dot{z}(\sigma_\phi) > h(\sigma_\phi) - c, \quad \phi \in (0,1).
\end{equation}
By \eqref{e:sigmaphi}, passing to the limit as $\phi\to 1^-$ gives $\dot z(1) \geq h(1)-c$, because of the continuity of $h$ at $1$. Moreover, since $\dot z(1) \geq 0$ it holds that $\dot z(1) \geq \max\left\{0, h(1)-c\right\}=r_+$.
This concludes the proof, since it necessarily follows that $\dot z(1)=r_+$ also in this case.
\end{proof}

\begin{remark}
{
\rm
We prove in Remark \ref{rem:z c1} that $z\in C^1(0,1]$ under the assumptions of Proposition \ref{prop:derivative of z at alpha}. We now show that \eqref{e:dotq-ex-fin} is necessary for the existence of $\dot z(1)$. We define
\[
q(\phi)=\phi^3\left(1-\phi \right)\left[\left(\sin\left(\log\left(1-\phi\right)\right)+2\right)^2 +2\cos\left(\log\left(1-\phi\right)\right)+\frac1{2}\sin\left(2\log\left(1-\phi\right)\right)\right],
\]
for $\phi \in [0,1]$. The function $q$ satisfies (q), while $\dot{q}(1)$ does not exist. Direct computations show that the function $z=z(\phi)$ defined by $z(\phi)=-\left(2+\sin(\log(1-\phi))\right)\left(1-\phi\right)\phi^2$
satisfies \eqref{first order problem} with $c=0$ and
$h(\phi)=\phi(\phi-1)\left[\cos(\log(1-\phi))+3\sin(\log(1-\phi))+6\right]$. It is easy to verify that $\dot{z}(1)$ does not exist.
}
\end{remark}

\section{The behavior of $z$ near $0$}
\label{ssec:dotz at zero}
\setcounter{equation}{0}

For $\phi_0\in (0, 1)$ we consider the problem, see Figure \ref{f:3.1} on the left,
\begin{equation}
\label{first order problem hat}
\begin{cases}
\dot{z}(\varphi)=h(\varphi)-c-\frac{q(\varphi)}{z(\varphi)}, \ \varphi\in (0, 1),\\
z(\phi_0)=z^*(\phi_0).
\end{cases}
\end{equation}

\begin{lemma}\label{e:zsd}
Assume {\em (q)}. Fix $c>c^*$. For every $\phi_0\in (0, 1)$ there is a unique solution $\hat z_{\phi_0}\in C[0, 1]\cap C^1\left(0, 1\right)$ to problem \eqref{first order problem hat}. We have $\hat z_{\phi_0}(0) =0$, and also
\begin{equation}\label{e:fake}
\hat{z}_{\phi_0} < z^*\ \hbox{ in }\ (\phi_0, 1]\quad \hbox{ and } \quad \hat{z}_{\phi_0}\ge z_\beta\ \hbox{ in }\ (0, 1],
\end{equation}
where $z_\beta$ is the solution to \eqref{first order problem} with $z_\beta(1)=\beta$.
If $0<\phi_1<\phi_0$ then $\hat{z}_{\phi_1}<\hat{z}_{\phi_0}$ in $(0,1]$.
\end{lemma}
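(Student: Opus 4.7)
The plan is to adapt, almost verbatim, the construction already performed in Step (i) of the proof of Proposition \ref{prop: first order ii} and then add a non-crossing argument for the two comparison claims. First, I invoke Lemma \ref{lem:cm-dpde} (1.a) on the left of $\phi_0$ and (1.b) on the right to obtain a unique solution $\hat z_{\phi_0}$ of \eqref{first order problem hat} defined on $[0,\delta]$ for some maximal $\phi_0 < \delta \le 1$; continuity at $\phi = 0$ is already built into the statement of Lemma \ref{lem:cm-dpde} (1.a) (and can equally be recovered from Lemma \ref{lem:zlimit}). Since $c > c^*$, the identity
\[
\dot{\hat z}_{\phi_0}(\phi) = h(\phi) - c^* - \frac{q(\phi)}{\hat z_{\phi_0}(\phi)} + (c^* - c) < h(\phi) - c^* - \frac{q(\phi)}{\hat z_{\phi_0}(\phi)}
\]
shows that $\hat z_{\phi_0}$ is a strict lower-solution of $\eqref{first order problem}_1$ with $c = c^*$. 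Since $\hat z_{\phi_0}(\phi_0) = z^*(\phi_0) < 0$, Lemma \ref{lem:cm-dpde} (2.b.i) applied on $(0,\phi_0)$ yields $\hat z_{\phi_0} > z^*$ there, while (2.b.ii) applied on $(\phi_0,\delta)$ yields $\hat z_{\phi_0} < z^*$ there.

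From $z^* < \hat z_{\phi_0} < 0$ on $(0,\phi_0)$ and $z^*(0^+) = 0$ a squeeze gives $\hat z_{\phi_0}(0) = 0$. From $\hat z_{\phi_0} < z^* < 0$ on $(\phi_0,\delta)$, if $\delta < 1$ then $\hat z_{\phi_0}(\delta^-) \le z^*(\delta) < 0$, contradicting the dichotomy in Lemma \ref{lem:cm-dpde} (1.b) which would require $\hat z_{\phi_0}(\delta) = 0$; hence $\delta = 1$, and in particular $\hat z_{\phi_0}(1) \le z^*(1) = 0$ with strict inequality, so $\hat z_{\phi_0}$ solves \eqref{first order problem} with $\hat z_{\phi_0}(1) \in \mathcal{A}_c = [\beta,0)$. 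This proves the first half of \eqref{e:fake} and also gives $\hat z_{\phi_0}(1) \ge \beta = z_\beta(1)$.

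For the lower bound $\hat z_{\phi_0} \ge z_\beta$ on $(0,1]$ and for the monotonicity $\hat z_{\phi_1} < \hat z_{\phi_0}$ when $\phi_1 < \phi_0$, I rely on uniqueness for the Cauchy problem associated with $\eqref{first order problem}_1$ (the right-hand side is continuous in $\phi$ and locally Lipschitz in the $z$-variable wherever $z<0$), exactly as exploited in Step (ii) of the proof of Proposition \ref{prop: first order ii}. This uniqueness forbids any interior crossing between two solutions corresponding to the same $c$: since $\hat z_{\phi_0}(1) \ge z_\beta(1)$ with both functions negative on $(0,1)$, the inequality must persist on all of $[0,1]$; and since, by what has just been proved, $\hat z_{\phi_0}(\phi_1) > z^*(\phi_1) = \hat z_{\phi_1}(\phi_1)$, the strict inequality $\hat z_{\phi_0} > \hat z_{\phi_1}$ propagates to every point of $(0,1]$. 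The only really delicate bookkeeping step is choosing correctly between Lemma \ref{lem:cm-dpde} (2.b.i) and (2.b.ii) on the two sides of $\phi_0$; after that, the proof is essentially a repackaging of material already present in Section \ref{s:sing2}, plus the standard Cauchy non-crossing.
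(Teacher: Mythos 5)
Your proposal is correct and essentially reproduces the paper's proof, which itself is a terse pointer to \emph{Step (i)} of Proposition~\ref{prop: first order ii} plus the Cauchy no-crossing argument; you simply spell out what the paper leaves implicit. One place where your local logic is too quick: after obtaining $\hat z_{\phi_0}<z^*$ on $(\phi_0,1)$ and concluding $\delta=1$, you write ``$\hat z_{\phi_0}(1)\le z^*(1)=0$ with strict inequality'' as if the strictness at the endpoint were automatic, but the comparison lemma only yields a non-strict bound in the limit $\phi\to 1^-$. The strict inequality $\hat z_{\phi_0}(1)<0$ (which is precisely $\eqref{e:fake}_1$ at the endpoint, and is also what you need to place $\hat z_{\phi_0}(1)$ in $\mathcal{A}_c$) is obtained in Step~(i) of Proposition~\ref{prop: first order ii} by observing that $z^*-\hat z_{\phi_0}$ satisfies a differential inequality making it strictly increasing on $(\phi_0,1)$, so that its limit at $1$ is strictly positive. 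Since you announce at the outset that you are following Step~(i) almost verbatim, this is a presentational lapse rather than a conceptual gap, but as written that sentence does not follow from what precedes it.
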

\begin{proof}
The existence and uniqueness of solutions is proved by {\em Step (i)} in the proof of Proposition \ref{prop: first order ii}. Inequality $\eqref{e:fake}_1$  follows from the arguments contained in {\em Step (i)} of the proof of Proposition \ref{prop: first order ii}, while $\eqref{e:fake}_2$ is obvious.
\par
If $0<\phi_1<\phi_0$ then $\hat{z}_{\phi_1}(\phi_0)< \hat{z}_{\phi_0}(\phi_0)$, because $\hat{z}_{\phi_1} < z^*$ in $(\phi_1,1]$ and $\phi_0 \in (\phi_1, 1]$. The monotony follows by the uniqueness of solutions to the Cauchy problem associated to $\eqref{first order problem}_1$. The regularity of $\hat{z}_{\phi_0}$ follows from both $\eqref{first order problem hat}_1$ and Lemma \ref{lem:zlimit}; directly from $\eqref{e:fake}_2$, we deduce $\hat{z}_{\phi_0}(0)=0$.
\end{proof}

\begin{figure}[htb]
\begin{center}
\begin{tikzpicture}[>=stealth, scale=0.7]

\draw[->] (0,0) --  (6,0) node[below]{$\phi$} coordinate (x axis);
\draw[->] (0,0) -- (0,0.5) node[right]{$z$} coordinate (y axis);
\draw (0,0) -- (0,-5);
\draw (5,0) -- (5,-5);
\draw[thick] (0,0) .. controls (2,-1.5) and (3.5,-1.5) .. node[very near end, below=-3]{$z^*$} (5,0) node[above]{\footnotesize{$1$}}; 
\draw[thick] (3.5,-1) .. controls (3.7,-1.1) and (3.5,-1.2) .. node[very near end, below=-3]{$\hat z_{\phi_0}$}  (5,-1.5) ; 
\draw[thick,dotted] (0,0) .. controls (1,-0.2) and (3,-0.5) .. (3.5,-1); 
\draw[dotted] (3.5,0) node[above]{\footnotesize{$\phi_0$}}-- (3.5,-1);

\draw[thick] (1.6,-0.9) .. controls (2.3,-2) and (3.5,-2.3) .. node[very near end, below=-3]{$\hat z_{\phi_1}$}  (5,-2.5) ; 
\draw[thick,dotted] (0,0) .. controls (1,-0.2) and (1.4,-0.5) .. (1.6,-0.9); 
\draw[dotted] (1.6,0) node[above]{\footnotesize{$\phi_1$}}-- (1.6,-0.9); 

\draw[thick] (0,0) .. controls (1,-3) and (4,-3.3) .. node[near end, below=-3]{$\hat z$}  (5,-3.5) node[right]{\footnotesize{$\hat \beta$}}; 

\draw[thick] (0,0) .. controls (1,-4) and (4,-4.3) .. node[near end, below=-3]{$z_\beta$}  (5,-4.5) node[right]{\footnotesize{$\beta$}}; 

\begin{scope}[xshift=10cm]
\draw[->] (0,0) --  (6,0) node[below]{$\phi$} coordinate (x axis);
\draw[->] (0,0) -- (0,0.5) node[right]{$z$} coordinate (y axis);
\draw (0,0) -- (0,-4.2);
\draw (5,0) -- (5,-4.2);
\draw[thick] (0,0) .. controls (2,-0.7) and (4,0) .. node[midway, below=5,right=3]{$z_b$} (5,-1) node[right]{$b$}; 
\draw (0,0) -- (2,-0.8) node[right]{\footnotesize{$s_+$}};
\draw (0,0) -- (2,-1.2) node[right]{\footnotesize{$s_+^*$}};
\draw (0,0) -- (2,-2.2) node[right]{\footnotesize{$s_-^*$}};
\draw (0,0) -- (2,-4) node[left]{\footnotesize{$s_-$}};
\draw[thick, dashed] (0,0) .. controls (2,-2.2) and (4,-2.2) .. node[midway, below=3, right=3]{$z^*$} (5,0); 
\draw[thick] (0,0) .. controls (2,-3.8) and (4,-2.4) .. node[near end, above]{$z_{\hat \beta}$} (5,-3.4) node[right]{$\hat \beta$}; 
\draw[thick] (0,0) .. controls (2,-4) and (4,-3) .. node[near end, below]{$z_{\beta}$} (5,-4) node[right]{$\beta(c)$}; 
\draw (5,0)  node[above]{\footnotesize{$1$}};

\end{scope}
\end{tikzpicture}
\end{center}

\caption{\label{f:3.1}{Left: the functions $\hat{z}_{\phi_0}$, $\hat z$ and $z_\beta$ in Lemma \ref{e:zsd}. Right: an illustration of Proposition \ref{prop:dotz at zero} for fixed $c>c^*$. Solutions are labelled according to their right-hand limit; $s_\pm$ denote the slope of the tangent of $z$ at $0$. The dashed curve is the plot of $z^*$.}}
\end{figure}
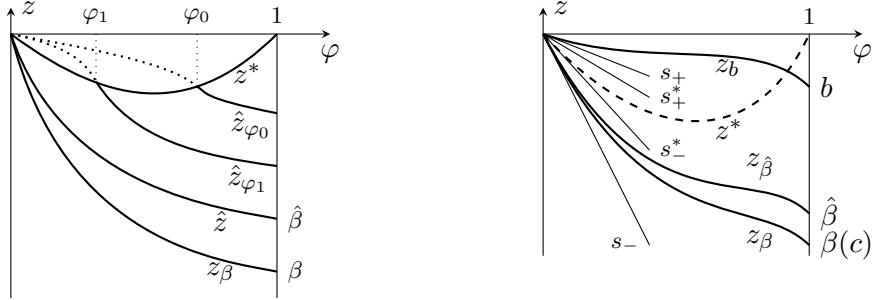


For every $c>c^*$, by the monotonicity of $\{\hat{z}_{\phi_0}\}_{\phi_0}$ and $\eqref{e:fake}_2$, Lemma \ref{lem:convergence} implies that there exists $\hat{z}\in C^0[0,1]\cap C^1\left(0, 1\right)$ which solves \eqref{first order problem0} such that
\begin{equation}
\label{hatz}
\hat{z}(\phi)=\lim_{\phi_0\to 0^+}\hat{z}_{\phi_0}(\phi), \ \phi \in \left[0,1\right].
\end{equation}
Such a $\hat{z}$ satisfies $z_\beta\leq \hat{z} \le z^*$ in $(0,1)$ by \eqref{e:fake} and then $\eqref{first order problem}$. Define $\hat{\beta}\in [\beta, 0)$ by
\begin{equation}
\label{e:hatbeta}
\hat{\beta}:=\hat{z}(1).
\end{equation}
In the following result we assume again \eqref{e:D1g1 at zero}. We shall prove in Remark \ref{r:ex_Diego} that such a condition is necessary for the existence of $\dot z(0)$. From \eqref{estimates on c*} and \eqref{e:D1g1 at zero} we deduce $\left(h(0)-c\right)^2 - 4\dot q(0) \geq 0$ for any $c\geq c^*$; we can then denote
$$
s_\pm(c):= \frac{h(0)-c}{2} \pm \frac{\sqrt{\left(h(0)-c\right)^2 - 4 \dot q(0)}}{2},\quad \hbox{ for } c\ge c^*.
$$
The next proposition generalizes \cite[Proposition 5.2]{CM-DPDE} to the case of a more generic $q$, and, more deeply, to the case $z(1)<0$. It is worth noting that this latter case reveals  the behavior detected by \eqref{e:dotz 2}, and shown in Figure \ref{f:3.1} on the right, which was not contained in \cite{CM-DPDE}.

\begin{proposition}
\label{prop:dotz at zero}
Assume {\em (q)} and \eqref{e:D1g1 at zero}.
If $c\geq c^*$ and $z$ is a solution of \eqref{first order problem}, then, $\dot{z}(0)$ exists.
Moreover, it holds that
\begin{equation}
\label{derivative of z at zero}
\dot{z}(0) =
\left\{
\begin{array}{ll}
s_+(c)\ &\mbox{ if  $c>c^*$ and  $z(1) > \hat{\beta}$},
\\[2mm]
s_-(c^*) \ &\mbox{ if $c=c^*$},
\end{array}
\right.
\end{equation}
and, if $c^*>h(0)$,
\begin{equation}
\label{e:dotz 2}
\dot{z}(0)=
s_-(c)\ \mbox{ if } \ c>c^* \ \mbox{ and } \ z(1) \in [\beta, \hat{\beta}].
\end{equation}
\end{proposition}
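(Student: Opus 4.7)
The plan is to (i) establish existence of $\dot z(0)$, showing it must equal $s_+(c)$ or $s_-(c)$; and then (ii) identify which root is realized in each sub-case. For (i), I set $w(\phi) := z(\phi)/\phi$ on $(0,1]$: we have $w<0$ by $\eqref{first order problem}_2$ and $w(\phi) \ge f(\phi)/\phi - c$ by integrating $\dot z \ge h(\phi) - c$. Dividing $\eqref{first order problem}_1$ by $\phi$ and using \eqref{e:D1g1 at zero},
\[
\phi\,\dot w(\phi) \;=\; h(\phi) - c - w(\phi) - \frac{q(\phi)/\phi}{w(\phi)} \;\longrightarrow\; G(w) := h(0) - c - w - \frac{\dot q(0)}{w}
\]
uniformly for $w$ in compact subsets of $(-\infty,0)$ as $\phi \to 0^+$; the zeros of $G$ are precisely $s_\pm(c)$. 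If $\ell := \liminf_{\phi \to 0^+} w < \Lambda := \limsup_{\phi \to 0^+} w$, choose $\mu \in (\ell,\Lambda) \setminus \{s_+(c),s_-(c)\}$: then $w$ must cross $\mu$ in both directions along sequences $\phi_n \to 0^+$, but the displayed convergence forces $\operatorname{sgn}\dot w(\phi_n) = \operatorname{sgn}G(\mu) \ne 0$ for $n$ large, a contradiction. Hence $\dot z(0)$ exists, and passing to the limit in $\eqref{first order problem}_1$ gives $\dot z(0) \in \{s_+(c), s_-(c)\}$.

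The crucial auxiliary tool for (ii) is a slope-uniqueness lemma: for any $c \ge c^*$, at most one solution of \eqref{first order problem} has $\dot z(0) = s_-(c)$. Indeed, if $z_1, z_2$ share the slope $s_-(c)$ at $0$, their difference $d := z_1 - z_2$ satisfies $d' = q\,d/(z_1 z_2)$, so using $s_+(c)\,s_-(c) = \dot q(0)$ one gets $(\log|d|)' = q/(z_1 z_2) \sim (s_+(c)/s_-(c))\,\phi^{-1}$ as $\phi \to 0^+$, hence $d(\phi) = \phi^{s_+(c)/s_-(c)(1+o(1))}$; since $s_+(c)/s_-(c) \in (0,1)$, the quotient $d(\phi)/\phi$ diverges, contradicting $d(\phi)/\phi \to \dot z_1(0) - \dot z_2(0) = 0$. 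Given this lemma, the identification proceeds as follows. For $c > c^*$, combining slope-uniqueness with a comparison against the linear upper-solution $L\phi$, $L \in (s_-(c), s_+(c))$ (strict upper-solution of $\eqref{first order problem0}_1$ near $0$ since $G(L) < 0$ there) and the monotonicity in Lemma \ref{e:zsd} yields $\dot{\hat z}(0) = s_-(c)$; then for $z$ with $z(1) > \hat\beta$, Corollary \ref{cor:monotonicity} gives $z > \hat z$ on $(0,1]$ and slope-uniqueness forces $\dot z(0) \ne s_-(c)$, hence $\dot z(0) = s_+(c)$; for $z$ with $z(1) \in [\beta, \hat\beta]$, slope-uniqueness and the monotonicity identify $\dot z(0) = s_-(c)$. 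For $c = c^*$, the only candidate is $z^*$: if $\dot z^*(0) = s_+(c^*)$, a construction modelled on Lemma \ref{e:zsd} transplanted to $c = c^*$ via an $\varepsilon$-perturbation $\tilde c = c^* + \varepsilon \to (c^*)^+$ followed by a limit would produce a second solution of \eqref{first order problem 0-0}, contradicting Proposition \ref{prop: first order}.

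The main obstacle is the slope-uniqueness lemma: the asymptotic $q/(z_1 z_2) \sim (s_+/s_-)\phi^{-1}$ relies not only on $\dot z_i(0) = s_-(c)$ but also on a uniform error control of the form $z_i(\phi) = s_-(c)\phi + o(\phi)$, and this has to be bootstrapped through the ODE to obtain integrability of the correction terms in $(\log|d|)'$; \eqref{e:D1g1 at zero} plays a decisive quantitative role there. A secondary difficulty is adapting the construction of Lemma \ref{e:zsd} to $c = c^*$, since the original argument exploited the strict inequality $c > c^*$ to make $\hat z_{\phi_0}$ a strict lower-solution of the $c^*$-equation; a limiting procedure with uniform estimates as $\tilde c \to (c^*)^+$ is required to close the contradiction at the threshold.
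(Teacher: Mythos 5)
Your step (i) — existence of $\dot z(0)$ via the $w = z/\phi$ phase argument and the conclusion $\dot z(0) \in \{s_-(c), s_+(c)\}$ — is in line with the paper's appeal to \cite[Proposition 5.2]{CM-DPDE}, and can be made rigorous with some care about keeping $w$ away from $0$ when $\dot q(0) > 0$. The real gap is in step (ii), which rests entirely on your ``slope-uniqueness lemma'' (at most one solution of \eqref{first order problem} has $\dot z(0) = s_-(c)$). This lemma is not proved, and the difficulty you flag — that $q/(z_1 z_2) \sim (s_+/s_-)\phi^{-1}$ requires quantitative error control, not just $\dot z_i(0) = s_-(c)$ — is fatal rather than technical. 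Writing $z_i(\phi) = s_-(c)\,\phi\,(1+\eps_i(\phi))$ with $\eps_i \to 0$, the error in $(\log|d|)'(\phi) - (s_+/s_-)\phi^{-1}$ is of size $o(1)/\phi$, and its integrability at $0$ is precisely what the extra hypothesis $\eqref{e:c=c^*2e}_1$ buys in Proposition \ref{p:beta=hatbeta}. Under (q) and \eqref{e:D1g1 at zero} alone the vector field $(\phi,z)\mapsto(z,\,-(c-h(\phi))z-q(\phi))$ is merely continuous at the origin, so uniqueness of the orbit tangent to the fast direction $s_-(c)$ is not guaranteed; indeed, whether two distinct solutions can share the slope $s_-(c)$ (equivalently, whether $\beta=\hat\beta$) is exactly the open problem the paper records just before Proposition \ref{p:beta=hatbeta}. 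Your proposal silently assumes it away.

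The paper never needs slope-uniqueness because it compares $z$ with the \emph{critical} solution $z^*$, not with $\hat z$, and exploits the strict separation $s_-(c) < s_-(c^*) \le s_+(c^*) \le s_+(c)$ for $c > c^*$ (see \eqref{e:ssss}). If $z(1) > \hat\beta$, the definition \eqref{hatz} of $\hat z$ as a limit gives $z > \hat z_{\phi_1}$ for some $\phi_1$, and $\hat z_{\phi_1} > z^*$ on $(0,\phi_1)$ by \eqref{e:asd}; hence $z > z^*$ near $0$, so $\dot z(0) \ge \dot z^*(0) = s_-(c^*) > s_-(c)$, forcing $\dot z(0) = s_+(c)$. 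If $z(1) \in [\beta, \hat\beta]$, then $z \le \hat z \le z^*$, so $\dot z(0) \le s_-(c^*)$; under $c^* > h(0)$ one has $s_+(c) \ge s_+(c^*) > s_-(c^*) \ge \dot z(0)$, so $\dot z(0) = s_-(c)$. The case $c=c^*$ is likewise settled by comparing any solution $y^*$ with $z^*$. In short: you should compare against $z^*$, because its slope $s_-(c^*)$ at $0$ lies \emph{strictly} between $s_-(c)$ and $s_+(c)$, which makes the comparison conclusive without any uniqueness of tangent orbits; comparing against $\hat z$ only gives non-strict inequalities and so forces you to invent the slope-uniqueness lemma, which is not available here.
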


%

\begin{proof}
Arguing as in the proof of \cite[Proposition 5.2]{CM-DPDE}, we deduce that $\dot z(0)$ exists for $c\ge c^*$ and is one of the root of the equation
%
$\gamma^2-\left(h(0)-c\right)\gamma + \dot q(0)=0$.
%
Then $\dot{z}(0) \in \left\{ s_-(c), s_+(c)\right\}$ for every $c\geq c^*$. Straightforward computations give
\begin{equation}\label{e:ssss}
s_-(c)<s_-(c^*)\leq s_+(c^*) \leq  s_+(c)\leq 0\ \mbox{ for any } \ c>c^*
\end{equation}
and $h(0)-c \leq s_-(c)$, for any $c\geq c^*$. We denote $s_\pm^*:=s_\pm(c^*)$.
\par
\smallskip
Take $c>c^*$. Let $\hat{z}_{\phi_0}$ and $\hat{z}$ be defined as in the beginning of Section \ref{ssec:dotz at zero}, see Figure \ref{f:3.1} on the left. If $z(1)>\hat{\beta}$ then $z(1)>\hat{z}_{\phi_1}(1)$, for some $\phi_1\in (0,1)$, because of \eqref{hatz}. Thus, $z>\hat{z}_{\phi_1}$ in $(0,1]$. We observed in \eqref{e:asd} that $\hat{z}_{\phi_1}>z^*$ in $(0,\phi_1)$. Thus, $z>z^*$ in $(0,\phi_1)$ and hence $\dot{z}(0) \geq \dot{z}^*(0)$. Since $s_-(c)<s_-^*\leq 0$ by \eqref{e:ssss}, we deduce $\dot{z}(0)=s_+(c)$. This proves $\eqref{derivative of z at zero}_1$.
 \smallskip
\par
Now, we prove $\eqref{derivative of z at zero}_2$. If $z=z^*$, then $\eqref{derivative of z at zero}_2$ was obtained in \cite[Proposition 5.2]{CM-DPDE} under some specific assumptions on $q$. Since the relevant ones were \eqref{e:q} and \eqref{e:D1g1 at zero}, we deduce that $\eqref{derivative of z at zero}_2$ occurs also in the current case. If $z=y^*$ is a solution of \eqref{first order problem}, different from $z^*$ (such a $y^*$ can exist, as we proved in Remark \ref{rem:c*=0}, since \eqref{e:c=c^*2e} does not necessarily follow), then $y^* < z^*$ in $(0,1]$ by Proposition \ref{prop: first order}. Since $\dot{y^*}(0)\in \left\{s_{-}^*, s_{+}^*\right\}$ and $\dot{z^*}(0)=s_{-}^*$ then we have $\dot{y^*}(0)=s_{-}^*$. Hence, $\eqref{derivative of z at zero}_2$ holds.
\par
It remains to prove \eqref{e:dotz 2} under the additional condition $h(0)-c^*<0$. By $\beta \leq z(1) \leq \hat{\beta}$ we have $z\leq \hat{z}$ and hence $z < z^*$, which implies $\dot{z}(0) \leq \dot{z}^*(0)$. Since, under the additional condition $h(0)-c^*<0$, we have $s_-^*<s_+^*$ and since we proved that $\dot{z}^*(0)=s_-^*$, we conclude that $\dot{z}(0)=s_-(c)$, which is \eqref{e:dotz 2}. This concludes the proof.
\end{proof}

\begin{remark}
\label{r:ex_Diego}
{
\rm
Now, we prove that \eqref{e:D1g1 at zero} is necessary for the existence of $\dot{z}(0)$. For $\phi \in [0,1]$ define
$q(\phi)=\phi(1-\phi)^4\left(2+\sin\left(\log \phi\right)\right)\left(3-\cos\left(\log\phi\right)-\sin\left(\log\phi\right)\right)$. The function $q$ satisfies (q), while $\dot{q}(0)$ does not exist, since
$\liminf_{\phi \to 0^+}q(\phi)/\phi < \limsup_{\phi \to 0^+}q
(\phi)/\phi$.
Direct computations show that the function $z(\phi)=-\left(2+\sin\left(\log\phi\right)\right)\left(1-\phi\right)^2\phi$ solves \eqref{first order problem} with $c=0$ and $h(\phi)=2\left(2+\sin\left(\log\phi\right)\right)\left(1-\phi\right)\phi-5\left(1-\phi\right)^2$. Clearly, $\dot{z}(0)$ does not exists.
}
\end{remark}

We now show that, under the assumptions of Proposition \ref{prop: first order 3}, the threshold $\hat\beta(c)$ defined in \eqref{e:hatbeta} and occurring in Proposition \ref{prop:dotz at zero} coincides with the threshold $\beta(c)$ introduced in Proposition \ref{prop: first order ii}. It is an open problem whether the two thresholds differ without assuming \eqref{e:D1g1 at zero} and \eqref{e:c=c^*2e}.

\begin{proposition}\label{p:beta=hatbeta}
Assume {\em (q)}, \eqref{e:D1g1 at zero}, \eqref{e:c=c^*2e} and $c>c^*$. Then $\beta(c)=\hat{\beta}(c)$.
\end{proposition}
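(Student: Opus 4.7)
The plan is to argue by contradiction. Since the inequality $\beta(c)\le\hat\beta(c)$ already follows from $\eqref{e:fake}_2$ and the definition \eqref{e:hatbeta}, I only need to rule out strict inequality. So I would assume $\beta(c)<\hat\beta(c)$, pick any $b\in(\beta(c),\hat\beta(c))$, and take the solution $z_b$ to \eqref{first order problem} with $z_b(1)=b$ furnished by Proposition \ref{prop: first order ii}. Because $z_b(1)<\hat z(1)$ and two solutions of the Cauchy problem attached to $\eqref{first order problem}_1$ cannot cross, this gives $z_b<\hat z$ throughout $(0,1]$.

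The crucial observation is that the extra hypotheses \eqref{e:D1g1 at zero} and $\eqref{e:c=c^*2e}_2$ allow me to apply Proposition \ref{prop:dotz at zero}: both $z_b$ and $\hat z$ take their right-endpoint values in the interval $[\beta(c),\hat\beta(c)]$, so \eqref{e:dotz 2} (together with the endpoint case of \eqref{derivative of z at zero} applied to $\hat z$) yields
\[
\dot z_b(0)=\dot{\hat z}(0)=s_-(c)<0.
\]
Consequently $\hat z(\phi)\,z_b(\phi)\sim s_-(c)^2\,\phi^2$ as $\phi\to0^+$.

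The argument is then finished by a Gr\"onwall-type estimate on $\eta:=\hat z-z_b$, which is positive in $(0,1]$ and continuous up to $0$ with $\eta(0)=0$. Subtracting the two equations $\eqref{first order problem}_1$ satisfied by $\hat z$ and $z_b$ gives
\[
\dot\eta(\phi)=k(\phi)\,\eta(\phi),\qquad k(\phi):=\frac{q(\phi)}{\hat z(\phi)\,z_b(\phi)}>0,\qquad \phi\in(0,1).
\]
The asymptotics above together with $\eqref{e:c=c^*2e}_1$ imply $\int_0 k(\sigma)\,d\sigma<+\infty$. Integrating the logarithmic derivative on $(\phi_0,\phi)$ produces
\[
\log\eta(\phi)-\log\eta(\phi_0)=\int_{\phi_0}^{\phi}k(\sigma)\,d\sigma;
\]
as $\phi_0\to0^+$ the left-hand side diverges to $+\infty$ while the right-hand side converges to the finite number $\int_0^{\phi}k(\sigma)\,d\sigma$, the desired contradiction, so $\beta(c)=\hat\beta(c)$.

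The main obstacle is really verifying that $\hat z$ and $z_b$ share the same initial slope $s_-(c)$; that step is exactly what Proposition \ref{prop:dotz at zero} delivers and it is the place where $\eqref{e:c=c^*2e}_2$ (forcing the two-root branch structure near $0$) enters essentially. Once that matching is in hand, the linearization $\dot\eta=k\eta$ has a coefficient $k$ integrable at $0$ by $\eqref{e:c=c^*2e}_1$, and the conclusion is forced. Without either part of \eqref{e:c=c^*2e}, the chain would break at precisely this step, consistently with the open problem signaled after the statement.
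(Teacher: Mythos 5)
Your proof is correct, and it takes a slightly different route from the paper. The paper introduces an auxiliary family $\{z_\eps\}$ of solutions of the initial-value problem with $z_\eps(0)=-\eps$, then argues in two parts: a Gr\"onwall estimate on $\hat z - z_\eps$ (with the bound $\eta_\eps \le \eps e^C$) shows $z_\eps \to \hat z$ pointwise, while Lemma \ref{lem:convergence} and the extremal role of $z_\beta$ show $z_\eps\to z_\beta$; uniqueness of the limit then gives $\hat z = z_\beta$. You avoid the approximating family entirely: you assume $\beta(c)<\hat\beta(c)$, take any intermediate $b$, form $\eta=\hat z-z_b>0$ with $\eta(0^+)=0$, and show $\dot\eta=k\eta$ with $k$ integrable at $0$, which forces $\log\eta(\phi_0)$ to stay bounded as $\phi_0\to 0^+$, a contradiction. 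The technical core is the same — Proposition \ref{prop:dotz at zero} gives $\dot{\hat z}(0)=\dot z_b(0)=s_-(c)=h(0)-c<0$ (here $\eqref{e:c=c^*2e}_2$ enters, as you correctly note), whence $\hat z z_b \sim (h(0)-c)^2\phi^2$ and $\eqref{e:c=c^*2e}_1$ delivers $\int_0 k<\infty$ — but your argument by direct contradiction is shorter and dispenses with the monotone-convergence step of the paper. One small point worth spelling out is why $\eta>0$ on all of $(0,1]$: this is a consequence of the local Lipschitz continuity of $(\phi,y)\mapsto h(\phi)-c-q(\phi)/y$ for $y<0$, so two solutions of $\eqref{first order problem}_1$ that differ at $\phi=1$ cannot cross in $(0,1)$; you invoke this correctly, just in passing.
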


\begin{proof}
Consider $\eps>0$ and let $z_\eps$ be the solution of
\begin{equation*}
\begin{cases}
\dot{z_\eps}(\phi)=h(\phi)-c-\frac{q(\phi)}{z_\eps(\phi)},  \phi>0,\\
z_\eps(0)=-\eps<0.
\end{cases}
\end{equation*}
Lemma \ref{lem:cm-dpde} {\em (1.b)} implies that $z_\eps$ exists and it is defined in its maximal-existence interval $[0,\delta]$, for some $0< \delta\le 1$.  By the uniqueness of solutions of the Cauchy problem associated to $\eqref{first order problem0}_1$, we have necessarily $z_\eps<z_\beta$ in $[0,\delta]$, where $z_\beta$ was defined in the statement of Lemma \ref{e:zsd}. Since $z_\beta(\delta)<0$ then $\delta=1$.

We claim that $z_\eps$ converges for $\eps \to 0^+$ to both $\hat{z}$ and $z_\beta$, where $\hat{z}$ is defined in \eqref{hatz}, see Figure \ref{f:3.1} on the left. From the uniqueness of the limit, it follows that $\hat{z}$ and $z_\beta$ coincides and hence that $\beta=\hat{\beta}$.
To prove the claim, consider
\[
\eta_\eps(\phi):=\hat{z}(\phi)-z_\eps(\phi), \ \phi \in [0,1].
\]
Since $\hat{z}\ge z_\beta>z_\eps$ in $[0,1]$, then $\eta_\eps>0$ in $[0,1]$. Moreover, $\eta_\eps(0)=\eps$. We have
\[
\dot{\eta_\eps}(\phi)=\frac{q(\phi)}{z_\eps(\phi)\hat{z}(\phi)}\eta_\eps(\phi), \ \phi \in (0,1).
\]
Thus,
\[
\frac{\dot{\eta_\eps}(\phi)}{\eta_\eps(\phi)}=\frac{q(\phi)}{z_\eps(\phi)\hat{z}(\phi)}, \ \phi \in(0,1)
\]
and hence, for any $0< \tau < \phi$,
\begin{equation}
\label{e:etaeps}
\log\left(\eta_\eps(\phi)\right) -\log\left(\eta_\eps(\tau)\right)=\int_{\tau}^{\phi}\frac{q(s)}{z_\eps(s)\hat{z}(s)}\,ds \le \int_{\tau}^{1}\frac{q(s)}{z_\beta(s)\hat{z}(s)}\,ds.
\end{equation}
Notice, from $\eqref{e:c=c^*2e}_2$ it follows that we can apply \eqref{e:dotz 2} with $\dot q(0)=0$ (because of \eqref{e:D1g1 at zero}) and obtain $z_\beta(s)\hat{z}(s)=\left(h(0)-c\right)^2 s^2 + o(s^2)$, as $s\to 0^+$. Hence,  from $\eqref{e:c=c^*2e}_1$,
\[
\sup_{\tau >0} \int_{\tau}^{1} \frac{q(s)}{z_\beta(s)\hat{z}(s)}\,ds=:C<+\infty.
\]
From \eqref{e:etaeps}, by taking the limit as $\tau\to 0^+$ we deduce $\eta_\eps(\phi) \le \eps e^C$, $\phi \in [0,1)$, and then
\begin{equation}
\label{e:etaeps2}
\lim_{\eps \to 0^+}
z_\eps(\phi)=\hat{z}(\phi), \ \phi \in [0,1).
\end{equation}
We now apply Lemma \ref{lem:convergence} to deduce that $z_\eps$ converges (uniformly on $[0,1]$) to a solution $\bar{z}$ of $\eqref{first order problem0}_1$ in $(0,1)$ such that $\bar{z}<0$ in $(0,1)$ and $\bar{z}(0)=0$. Since $z_\eps < z_\beta$ and $z_\beta$ lies below every solution of \eqref{first order problem0}, by the very definition of $z_\beta$, we conclude that $\bar{z}$ coincides with $z_\beta$, that is
$\lim_{\eps \to 0^+} z_\eps(\phi)=z_\beta(\phi)$, $\phi \in [0,1]$.
From this formula and \eqref{e:etaeps2} we clearly have $z_\beta=\hat{z}$.\end{proof}

\section{Strongly non-unique strict semi-wavefronts}
\label{s:existence_0alpha}
\setcounter{equation}{0}

We now apply the previous results to study semi-wavefronts of Equation \eqref{e:E} when $D$ and $g$ satisfy (D1), (g0) and \eqref{Dg}; in particular, we prove Theorem \ref{th:swf to zero} and Corollary \ref{cor:swf qualitative behavior}. Indeed, all the results obtained in Sections \ref{s:sing2}--\ref{ssec:dotz at zero} apply when we set
\begin{equation}
\label{e:q=Dg}	
q:=Dg,
\end{equation}
since such $q$ fulfills (q). Throughout this section, by $c^*$ we always intend the threshold given by Proposition \ref{prop: first order} for $q$ as in \eqref{e:q=Dg}, for which it holds \eqref{estimates on c* new}, as observed in Remark \ref{rem:MP}.

\begin{lemma}
\label{lem:D1fracz}
Assume {\em (D1)}, {\em (g0)} and \eqref{Dg}. Consider $c\geq c^*$ and let $z$ be the solution of \eqref{first order problem 0-0} when \eqref{e:q=Dg} occurs. Then, it holds that
\begin{equation}
\label{e:D1fracz}
\lim_{\phi \to 1^-} \frac{D(\phi)}{z(\phi)}=
\left\{
\begin{array}{ll}
\frac{h(1)-c-\sqrt{\left(h(1)-c\right)^2-4\dot{D}(1)g(1)}}{2g(1)} \ &\mbox{ if } \ \dot{D}(1)<0,\\[10pt]
\min\left\{0, \frac{h(1)-c}{g(1)}\right\} \ &\mbox{ if } \ \dot{D}(1)=0.
\end{array}
\right.
\end{equation}
\end{lemma}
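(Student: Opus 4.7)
The plan is to combine the ODE satisfied by $z$ with the known value of $\dot z(1)$ from Proposition \ref{prop:derivative of z at alpha}. Since $g>0$ on $(0,1]$ by (g0), I would rewrite the differential equation $\eqref{first order problem 0-0}_1$, with $q=Dg$, in the form
\[
\frac{D(\phi)}{z(\phi)} = \frac{h(\phi)-c-\dot z(\phi)}{g(\phi)}, \qquad \phi\in(0,1).
\]
By continuity of $h$ and $g$ at $1$ and $g(1)>0$, the limit in \eqref{e:D1fracz} reduces to computing $\lim_{\phi\to 1^-}\dot z(\phi)$ and then forming $(h(1)-c-\lim\dot z)/g(1)$.

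Next, I would identify this left-hand limit with $\dot z(1)$ via Proposition \ref{prop:derivative of z at alpha}. To apply that proposition with $q=Dg$: condition (q) is immediate from (D1), (g0), \eqref{Dg}; the expansion $D(\phi)=\dot D(1)(\phi-1)+o(\phi-1)$ together with continuity of $g$ at $1$ yields $\dot q(1)=\dot D(1)\,g(1)$, which is nonpositive because $\dot D(1)\le 0$ (forced by $D>0$ in $(0,1)$ with $D(1)=0$) and $g(1)>0$. Hence \eqref{e:dotq-ex-fin} holds. Since \eqref{first order problem 0-0} imposes $z(1)=0$, case \emph{(ii)} of Proposition \ref{prop:derivative of z at alpha} delivers the value of $\dot z(1)$, and the $C^1(0,1]$-regularity of $z$ noted in the remark following Proposition \ref{prop:derivative of z at alpha} gives $\lim_{\phi\to 1^-}\dot z(\phi)=\dot z(1)$.

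All that remains is routine algebraic simplification. When $\dot D(1)<0$, so $\dot q(1)<0$, I would compute
\[
h(1)-c-\dot z(1) = \tfrac{1}{2}\bigl[h(1)-c-\sqrt{(h(1)-c)^2-4\dot D(1)g(1)}\bigr],
\]
and dividing by $g(1)$ produces $\eqref{e:D1fracz}_1$. When $\dot D(1)=0$, so $\dot q(1)=0$, the identity $h(1)-c-\max\{0,h(1)-c\}=\min\{0,h(1)-c\}$ divided by $g(1)>0$ yields $\eqref{e:D1fracz}_2$. I expect the main obstacle to be the justification of $\lim_{\phi\to 1^-}\dot z(\phi)=\dot z(1)$ in the doubly degenerate subcase $\dot D(1)=0$ and $h(1)\le c$, where both one-sided derivatives vanish and no naive L'Hôpital argument applied to $q/z$ or $D/z$ alone succeeds; this is exactly the point where the $C^1$ extension of $z$ up to $\phi=1$ (from the remark cited above) is indispensable, and without it the chain of substitutions would not close.
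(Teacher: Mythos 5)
Your reduction $D/z=(h-c-\dot z)/g$ is a clean way to express the target, and your verification that $q=Dg$ satisfies (q) and \eqref{e:dotq-ex-fin} (with $\dot q(1)=\dot D(1)g(1)\le 0$) is correct, as is the final algebra. The gap is precisely where you suspected it would be, and your attempted patch does not close it: you appeal to the $C^1(0,1]$ regularity of $z$ ``from the remark following Proposition~\ref{prop:derivative of z at alpha}'', but that remark is only a forward reference to Remark~\ref{rem:z c1}, and Remark~\ref{rem:z c1} handles the case $z(1)=0$ \emph{by citing Lemma~\ref{lem:D1fracz}} itself. So your argument is circular exactly in the doubly degenerate subcase.

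More precisely: Proposition~\ref{prop:derivative of z at alpha} gives only that the one-sided difference quotient $z(\phi)/(\phi-1)$ converges, not that $\dot z(\phi)$ converges. When $\dot z(1)>0$ (i.e.\ $\dot D(1)<0$, or $\dot D(1)=0$ with $c<h(1)$), you could rescue your argument without the remark by writing $q(\phi)/z(\phi)=\frac{q(\phi)/(\phi-1)}{z(\phi)/(\phi-1)}\to \dot q(1)/\dot z(1)$ and plugging into the ODE; combined with the characteristic equation $\dot z(1)^2-(h(1)-c)\dot z(1)+\dot q(1)=0$ this yields $\lim\dot z(\phi)=\dot z(1)$. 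But when $\dot D(1)=0$ and $c\ge h(1)$, one has $\dot z(1)=0$ and this computation of $\lim q(\phi)/z(\phi)$ degenerates to $0/0$ with no obvious resolution. The paper's proof here is genuinely substantive: it constructs explicit lower- and upper-solutions of the form $\omega(\phi)=-\frac{g(1)}{c-h(1)+\varepsilon g(1)}D(\phi)$ and $\eta(\phi)=-\frac{g(1)}{c-h(1)-\varepsilon g(1)}D(\phi)$, uses a Mean Value Theorem sequence $\phi_n\to 1$ with $\dot z(\phi_n)\to 0$ to compare $z$ with $\omega$ and $\eta$ near $\phi_n$, and then extends the comparison to a full left neighborhood of $1$ via a minimization/contradiction argument, thereby pinching $D/z$ between $-\frac{c-h(1)}{g(1)}-\varepsilon$ and $-\frac{c-h(1)}{g(1)}+\varepsilon$. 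Nothing in your proposal replaces this step, so the proof does not close.
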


\begin{proof}
First, observe that Proposition \ref{prop:derivative of z at alpha} applies to the current case.

If either $\dot{D}(1)<0$ or $\dot{D}(1)=0$ and $c<h(1)$, then $\dot{z}(1)>0$, by \eqref{derivative of z at alpha}, because $\dot{q}(1)=\dot{D}(1)g(1)$. As a consequence, we have
\begin{equation*}
\lim_{\phi \to 1^-}
\frac{D(\phi)}{z(\phi)}=\lim_{\phi\to 1^-}\frac{\frac{D(\phi)}{\phi - 1}}{\frac{z(\phi)}{\phi - 1}}=\frac{\dot{D}(1)}{\dot{z}(1)},
\end{equation*}
which, together with \eqref{derivative of z at alpha}, implies both $\eqref{e:D1fracz}_1$ and the first half of $\eqref{e:D1fracz}_2$.

\smallskip

If $\dot{D}(1)=0$ and $c\geq h(1)$, we need a refined argument based on strict upper- and lower-solutions of $\eqref{first order problem}_1$. We split the proof in two subcases.

\smallskip

\noindent {\em (i)} Assume first $\dot{D}(1)=0$ and $c>h(1)$. Fix $\eps >0$ and define $\omega=\omega(\varphi)$ by
\begin{equation}\label{e:ur_omega}
\omega(\varphi):=- \frac{g(1)}{c-h(1)+\varepsilon g(1)}D(\varphi), \quad \hbox{ for } \varphi\in(0,1).
\end{equation}
First, we observe that $\omega <0$ in $(0,1)$. Moreover, we get
\begin{equation*}
\dot{\omega}(\varphi)=-\frac{g(1)}{c-h(1) +\varepsilon g(1)}\dot{D}(\varphi),
\end{equation*}
which in turn implies $\dot{\omega}(1)=0$, since $\dot{D}(1)=0$. Now, if we compute the right-hand side of  $\eqref{first order problem}_1$ applied to ${\omega}$, we obtain
\begin{equation*}
h(\varphi) - c -\frac{D(\varphi)g(\varphi)}{\omega(\varphi)} = h(\varphi) - c +\frac{g(\varphi) \left[ c- h(1) +\varepsilon g(1)\right]}{g(1)}, \quad \mbox{ for } \ \varphi \in (0,1),
\end{equation*}
which tends to $\varepsilon g(1) >0$ as $\varphi \to 1^-$. Hence, there exists $\sigma \in (0,1)$ such that
\begin{equation}
\label{e:lower solution}
\dot{\omega}(\varphi) < h(\varphi) -c -\frac{D(\varphi)g(\varphi)}{\omega(\varphi)}, \quad \varphi \in [\sigma, 1),
\end{equation}
that is, $\omega$ is a (strict) lower-solution of $\eqref{first order problem}_1$ in $[\sigma, 1)$.
\par
Since $\dot{z}(1)=0$, we can take a sequence $\left\{\varphi_n\right\}_n\subset (\sigma,1)$, with $\phi_n\to 1$ as $n\to\infty$, such that $\dot{z}(\varphi_n)\to 0$ as follows. Let  $\{\sigma_n\}_n  \subset (\sigma, 1)$ be such that $\sigma_n \to 1$. For any $n\in \N$, the Mean Value Theorem implies that there exists $\varphi_n \in (\sigma_n, 1)$ for which it holds $\dot{z}(\varphi_n)= \frac{z(\sigma_n)}{\sigma_n -1}$. Since the sequence in the right-hand side of this last identity tends to $\dot{z}(1)=0$, as $n\to \infty$, we obtained the desired $\{\phi_n\}_n$.
With this in mind, from $\eqref{first order problem}_1$, we obtain
\begin{equation}
\label{sequence}
\lim_{n\to \infty} \frac{D(\varphi_n)g(\varphi_n)}{z(\varphi_n)} = h(1) - c,
\end{equation}
and then
\begin{equation*}
\lim_{n\to \infty} \frac{\omega(\varphi_n)}{z(\varphi_n)}= \frac{c-h(1)}{c-h(1) +\varepsilon g(1)}=1- \frac{\varepsilon g(1)}{c-h(1)+\varepsilon g(1)}<1.
\end{equation*}
Hence, there exists $\overline n$ such that $\omega(\phi_n)>z(\phi_n)$ for $n\ge \overline n$. Without loss of generality we assume that $\overline n=1$. We claim that
\begin{equation}\label{e:phizz}
\omega(\phi)>z(\phi), \quad \hbox{ for } \phi \in (\phi_1, 1).
\end{equation}

We reason  by contradiction, see Figure \ref{f:fp}. Suppose that there exists $\tilde\phi \in(\phi_1, 1)$ such that $\omega(\tilde \phi)\leq z(\tilde\phi)$. There exists $n \in\N$ for which $\tilde\phi \in (\phi_n, \phi_{n+1})$. Since
$\omega(\phi_n)>z(\phi_n)$ and $\omega(\phi_{n+1})>z(\phi_{n+1})$,
the existence of such a $\tilde\phi$ implies that the function $\left(\omega-z\right)$ in $(\phi_n, \phi_{n+1})$ admits a non-positive minimum at $\tilde\phi_2\in (\phi_n, \phi_{n+1})$, that is $\dot{\omega}(\tilde\phi_2)=\dot{z}(\tilde\phi_2)$ and $\omega(\tilde\phi_2)\leq z(\tilde\phi_2)$. Thus, from $\eqref{first order problem}_1$ and \eqref{e:lower solution} we have that
$$
h(\tilde\phi_2)-c-\frac{(Dg)(\tilde\phi_2)}{z(\tilde\phi_2)} = \dot{z}(\tilde\phi_2)=\dot{\omega}(\tilde\phi_2) < h(\tilde\phi_2)-c-\frac{(Dg)(\tilde\phi_2)}{\omega(\tilde\phi_2)},
$$
which in turn implies $1/z(\tilde\phi_2) > 1/\omega(\tilde\phi_2)$
because of $(Dg)(\tilde\phi_2) >0$. Hence, $z(\tilde\phi_2) < \omega(\tilde\phi_2)$ which contradicts the existence of $\tilde\phi_2$. Then \eqref{e:phizz} is proved. At last, we have
\begin{equation}
\label{estimate 1}
\frac{D(\varphi)}{z(\varphi)} > \frac{D(\varphi)}{\omega(\varphi)} = -\frac{c - h(1)}{g(1)} - \varepsilon, \quad  \varphi \in (\phi_1,1).
\end{equation}

\begin{figure}[htb]
\begin{center}
\begin{tikzpicture}[>=stealth, scale=0.7]

\draw[->] (0,0) --  (6,0) node[below]{$\phi$} coordinate (x axis);
\draw[->] (0,0) -- (0,0.5) node[left]{$z$} coordinate (y axis);
\draw (0,0) -- (0,-2.5);
\draw[thick] (1.5,-2.5) .. controls (2.5,0) and (3.5,0) .. node[near end, above]{$z$} (4.5,-2) ; 
\draw[thick] (1.5,-2) .. controls (2.5,-1.5) and (3.5,-1.5) .. node[near start, below]{$\omega$} (4.5,-1.5) ; 

\draw[dotted] (1.5,-2.5) -- (1.5,0) node[above]{\footnotesize{$\phi_n$}};
\draw[dotted] (4.5,-2) -- (4.5,0) node[above]{\footnotesize{$\phi_{n+1}$}};

\draw (5.5,0)  node[above]{\footnotesize{$1$}};
\draw (5.5,-0.1) -- (5.5,0.1);
\draw (0.5,0)  node[above]{\footnotesize{$\phi_1$}};
\draw (0.5,-0.1) -- (0.5,0.1);
\draw[dotted] (3.1,-1.5) -- (3.1,0) node[above]{\footnotesize{$\tilde\phi_2$}};
\draw[dotted] (2.3,-1.7) -- (2.3,0) node[above]{\footnotesize{$\tilde\phi$}};

\end{tikzpicture}
\end{center}

\caption{\label{f:fp}{A detail of the plots of functions $\omega$ and $z$ in case {\em (i)}.}}
\end{figure}
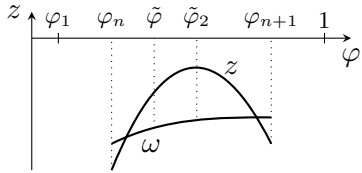

Analogously, for $\varepsilon>0$ small enough to satisfy $c>h(1) + \varepsilon g(1)$, we define $\eta=\eta(\varphi)$ by
\begin{equation*}
\eta(\varphi):=- \frac{g(1)}{c-h(1)-\varepsilon g(1)}D(\varphi), \quad \varphi\in(0,1),
\end{equation*}
By arguing as above when we considered $\omega$ in \eqref{e:ur_omega}, we deduce that
%
$\eta$ is a (strict) upper-solution of $\eqref{first order problem}_1$ in $[\sigma_2, 1)$ for some $\sigma_2 \in (0,1)$. Proceeding as we did to obtain \eqref{estimate 1}, we now get
%
$\eta(\phi)<z(\phi)$ for $\phi \in (\phi_1, 1)$, for some $\phi_1>\sigma_2$. Thus,
\begin{equation}
\label{estimate 2}
\frac{D(\varphi)}{z(\varphi)} < \frac{D(\varphi)}{\eta(\varphi)} = -\frac{c - h(1)}{g(1)} + \varepsilon, \quad  \varphi \in (\phi_1,1).
\end{equation}
Finally, putting together \eqref{estimate 1} and \eqref{estimate 2}, since $\varepsilon>0$ is arbitrary, we deduce
\begin{equation}
\label{D1 frac z limit}
\lim_{\varphi \to 1^-} \frac{D(\varphi)}{z(\varphi)}= \frac{h(1) - c }{g(1)}.
\end{equation}
Thus, we proved $\eqref{e:D1fracz}_2$ with $c>h(1)$.

\smallskip

\noindent {\em (ii)} Now, we consider the case $\dot{D}(1)=0$ and $c = h(1)$. Fix $\eps >0$. Set
\begin{equation}\label{e:urn_omega}
\omega(\varphi):=- \frac{D(\varphi)}{\eps}, \ \phi \in (0,1),
\end{equation}
which coincides with \eqref{e:ur_omega} in the current case. By proceeding exactly as in the case {\em (ii)}, we obtain \eqref{e:ur_omega} for $\omega$ defined as in \eqref{e:urn_omega}, namely $0>\omega(\phi) >z(\phi)$, for $\phi \in (\phi_1,1)$, for some $\phi_1\in (0,1)$. This implies, as in \eqref{estimate 1},
\begin{equation}\label{e:sif}
0> \frac{D(\phi)}{z(\phi)} > \frac{ D(\phi)}{\omega(\phi)} = - \eps, \quad \phi \in (\phi_1, 1).
\end{equation}
Then \eqref{e:sif} implies $D(\phi)/z(\phi)  \to 0^-$ as $\phi \to 1^-$,
which is $\eqref{e:D1fracz}_2$ in the case $c=h(1)$.
\end{proof}

\begin{remark}
\label{rem:z c1}
{\rm
Let $c\geq c^*$ and $z$ be any solution of \eqref{first order problem}. We infer that $z\in C^1(0,1]$. In fact, if $z(1)=b<0$, in the proof of case {\em (i)} of Proposition \ref{prop:derivative of z at alpha} we already checked that this is true, since $\lim_{\phi\to 1^-}\dot{z}(\phi)=\dot{z}(1)$. If $z(1)=0$, from \eqref{e:D1fracz} it follows that the right-hand side of $\eqref{first order problem}_1$ still has a finite limit, as $\phi \to 1^-$. As observed, this means that $z\in C^1(0,1]$.
}
\end{remark}

We now prove Theorem \ref{th:swf to zero}.
%

\begin{proof}[Proof of Theorem \ref{th:swf to zero}]
We first prove that there exists a semi-wavefront to $0$ of \eqref{e:E} {\em if } $c\geq c^*$. For $q=Dg$, consider one of the solutions $z=z(\varphi)$ of \eqref{first order problem}, provided by Propositions \ref{prop: first order} and \ref{prop: first order ii}. Consider the Cauchy problem
\begin{equation}
\label{cauchy}
\begin{cases}
\varphi\rq{}=\frac{z(\varphi)}{D(\varphi)},\\
\varphi(0)=\frac{1}{2}.
\end{cases}
\end{equation}
The right-hand side of $\eqref{cauchy}_1$ is of class $C^1$ in a neighborhood of $\frac1{2}$, and then there exists a unique solution $\varphi$ in its maximal-existence interval $(a,\xi_0)$, for $-\infty\le a<\xi_0\le\infty$. Since $z(\phi)/D(\phi)<0$ for $\phi\in(0,1)$, we deduce that $\phi$ is decreasing and then
$\lim_{\xi \to a^+} \varphi(\xi)= 1$, $ \lim_{\xi \to \xi_0^-} \varphi(\xi)=0$.
By $\eqref{cauchy}_1$, the profile $\varphi$ satisfies \eqref{e:ODE} in $(a,\xi_0)$. We show that, if $\xi_0\in\R$, we can extend $\varphi$ and obtain a solution of \eqref{e:ODE}, in the sense of Definition \ref{d:tws}, defined in the half-line $(a,+\infty)$.

Assume $\xi_0\in\R$ and set $\varphi(\xi)=0$, for any $\xi\geq \xi_0$.  The new function  (which without any ambiguity we still call $\varphi$) is clearly of class $C^0(a,+\infty)\cap C^2\left(\left(a,+\infty\right) \setminus\{\xi_0\}\right)$ and is a classical solution of \eqref{e:ODE} in $(a,+\infty)\setminus\{\xi_0\}$. Moreover, observe that, as a consequence of both the fact that $z$ satisfies $\eqref{first order problem}_3$, and $\eqref{cauchy}_1$, we have
\begin{equation}
\label{asymptotic behavior to b-}
\lim_{\xi \to \xi_0^-} D\left(\varphi (\xi)\right) \varphi\rq{}(\xi)=0.
\end{equation}
This implies that $D(\varphi)\varphi\rq{} \in L^1_{\rm loc}(a,+\infty)$.
\par
To show that $\varphi$ is a solution of \eqref{e:ODE} according to Definition \ref{d:tws}, it remains to prove \eqref{e:def-tw}. For this purpose, consider $\psi \in C_0^\infty(a,+\infty)$, and let $a < \xi_1 < \xi_2 < \infty$ be such that $\psi(\xi)=0$, for any $\xi \geq \xi_2$ or $\xi \leq \xi_1$.  Our goal is then to prove the following:
\begin{equation}
\label{distributional equation}
\int_{\xi_1}^{\xi_2} \left(D\left(\varphi\right)\varphi\rq{}-f\left(\varphi\right) + c \varphi\right) \psi\rq{} - g(\varphi)\psi\,d\xi=0.
\end{equation}
Identity \eqref{distributional equation} is obvious if $\xi_2<\xi_0$, since $\varphi$ solves \eqref{e:ODE} in $(a,\xi_0)$.
Assume $\xi_2 \geq \xi_0$. In the interval $(\xi_0,\xi_2)$ we have $\varphi=0$, and since $g(0)=f(0)=0$ we deduce
\begin{equation}
\label{integral 1}
\int_{\xi_0}^{\xi_2}  \left(D\left(\varphi\right)\varphi\rq{}-f\left(\varphi\right) + c \varphi\right) \psi\rq{} - g(\varphi)\psi\,d\xi =  0.
\end{equation}
In the interval $(\xi_1,\xi_0)$ we have, by \eqref{asymptotic behavior to b-},
\begin{align}
&\int_{\xi_1}^{\xi_0} \left(D\left(\varphi\right)\varphi\rq{}-f\left(\varphi\right) + c \varphi\right) \psi\rq{} - g(\varphi)\psi\,d\xi
\nonumber\\
=&\lim_{\eps\to0^+}\int_{\xi_1}^{\xi_0-\eps} \left(D\left(\varphi\right)\varphi\rq{}-f\left(\varphi\right) + c \varphi\right) \psi\rq{} - g(\varphi)\psi\,d\xi
\nonumber\\
=&\lim_{\eps\to0^+}\left(\left(D\left(\phi\right)\phi\rq{}-f(\phi)+c\phi\right)\psi\right)(\xi_0-\eps)
=0.
\label{integral 2}
\end{align}
Thus, identities \eqref{integral 1} and \eqref{integral 2} imply \eqref{distributional equation}.

\smallskip

At last, we claim that $a\in \R$, i.e., that $\varphi$ is {\em strict}. For this, it is sufficient to prove
\begin{equation}\label{e:phipneg}
\lim_{\xi \to a^+} \varphi\rq{}(\xi)<0.
\end{equation}
We stress that the case $\lim_{\xi\to a^+}\phi\rq{}(\xi)\to -\infty$, for short $\phi\rq{}(a^+)=-\infty$, is included in \eqref{e:phipneg}. To prove \eqref{e:phipneg}, we notice that, from \eqref{cauchy},
\begin{equation*}\label{e:phiplim}
\lim_{\xi \to a^+} \varphi\rq{}(\xi)= \lim_{\varphi \to 1^-} \frac{z(\varphi)}{D(\varphi)}.
\end{equation*}
Thus, \eqref{e:phipneg} easily follows from either a direct check, in the case $z(1)<0$, or the application of Lemma \ref{lem:D1fracz}, in the case $z(1)=0$. This concludes the first part of the proof.

\smallskip

Conversely, we prove that if there exists a semi-wavefront $\varphi$ to $0$ defined in $(a,+\infty)$, then $c\geq c^*$. Let $\bar{b}$ be defined by
\begin{equation}
\label{e:bar b}
\bar{b}:= \sup \left\{\xi >a: \phi(\xi)>0\right\} \in (a, +\infty].
\end{equation}
We have $0<\phi < 1$ in $\left(a,\bar{b}\right)$ and so $\phi$ is a classical solution of \eqref{e:ODE} in $\left(a,\bar{b}\right)$.
 We claim that
\begin{equation}
\label{asymptotic behaviour}
\lim_{\xi \to \bar{b}^-} D\left(\varphi(\xi)\right) \varphi\rq{}(\xi)=0.
\end{equation}
Suppose $\bar{b}\in \R$. Take $\xi_1>a$ and $\xi_2 >\bar{b}$. By choosing, in Definition \ref{d:tws}, $\psi \in C_0^\infty(a,+\infty)$ with support in $(\xi_1,\xi_2)$ such that $\psi(\bar{b}) \neq 0$, \eqref{e:def-tw} reads as (passing to the limit in the integral as in \eqref{integral 2})
\begin{multline*}
0=\int_{\xi_1}^{\xi_2}\left(D\left(\phi\right)\phi\rq{}+ c \phi - f\left(\phi\right)\right)\psi\rq{}-g\left(\phi\right)\psi\, d\xi=\\
\int_{\xi_1}^{\bar{b}} \left(D\left(\phi\right)\phi\rq{} + c \phi - f\left(\phi\right)\right)\psi\rq{}-g\left(\phi\right)\psi\, d\xi = \left(D(\phi)\phi\rq{}\right)(\bar{b}^-)\psi(\bar{b}).
\end{multline*}
Then we got \eqref{asymptotic behaviour} in this case. If $\bar{b}=+\infty$, by integrating \eqref{e:ODE} in $[\eta, \xi]\subset (\bar{a}, +\infty)$, we have
\begin{equation}
\label{e:Nik}
D\left(\varphi(\xi)\right)\varphi\rq{}(\xi) = D\left(\varphi(\eta)\right)\varphi\rq{}(\eta) - c \left(\varphi(\xi)-\varphi(\eta)\right) + \left(f(\varphi(\xi))-f(\varphi(\eta))\right) - \int_{\eta}^{\xi} g\left(\varphi(\sigma)\right)\,d\sigma.
\end{equation}
Since the function
$$
\xi \mapsto \int_\eta^\xi g(\varphi(\sigma))\,d\sigma
$$
is increasing (because $g>0$ in $(0,1)$), then $\lim_{\xi\to \infty} D\left(\varphi(\xi)\right)\varphi\rq{}(\xi)=\ell$ for some $\ell \in [-\infty, 0]$. If $\ell <0$, then, $\varphi\rq{}(\xi)$ tends either to some negative value or to $-\infty$ as $\xi \to +\infty$. In both cases, this contradicts the boundedness of $\varphi$, and so \eqref{asymptotic behaviour} is proved.

\smallskip

We show now \eqref{strict monotonicity}. Suppose by contradiction that \eqref{strict monotonicity} does not occur, there exists $\xi_0 \in (a,\bar{b})$, with $0<\varphi(\xi_0) < 1$, such that $\varphi\rq{}(\xi_0)=0$. Then \eqref{e:ODE} implies $\varphi\rq{}\rq{}(\xi_0)= -g\left(\varphi(\xi_0)\right)/D\left(\varphi(\xi_0)\right) <0$ and hence $\xi_0$ is a local maximum point of $\varphi$. It is plain to see that, in turn, this implies that there exists $a< \xi_1 < \xi_0$ which is a local minimum point of $\varphi$. From what we said about $\xi_0$, we necessarily have $\varphi(\xi_1)=\varphi\rq{}(\xi_1)=0$.

Take $\xi \in(\xi_1, \bar{b})$. Integrating \eqref{e:ODE} in $[\xi_1,\xi]$ gives \eqref{e:Nik} with $\xi_1$ replacing $\eta$.
By passing to the limit for $\xi \to \bar{b}^-$, from \eqref{asymptotic behaviour} we obtain the contradiction $0<0$. This proves \eqref{strict monotonicity}.

From \eqref{strict monotonicity}, we can define the function $z=z(\varphi)$, for $\varphi \in (0,1)$, by
\begin{equation}
\label{e:z from phi}
z(\varphi):= D(\varphi) \varphi\rq{}\left(\xi(\varphi)\right),
\end{equation}
where $\xi=\xi(\phi)$ is the inverse function of $\phi$. Again by \eqref{strict monotonicity}, it follows also that $z<0$ in $(0,1)$. From \eqref{asymptotic behaviour}, we clearly have $z(0^+)=0$; furthermore, a direct computation shows that $z$ solves equation $\eqref{first order problem000}_1$. Thus, $z$ solves problem \eqref{first order problem000}, which is \eqref{first order problem} with $q=Dg$. At last, Proposition \ref{prop: first order 4} implies $c\geq c^*$.
\end{proof}


\begin{remark}
{\rm
The proof of Theorem \ref{th:swf to zero} provides a formula for $\phi\rq{}(a^+)$. If $z(1)<0$, then $\phi\rq{}(a^+)=-\infty$. If $z(1)=0$, Lemma \ref{lem:D1fracz} leads to
\begin{equation}
\label{e:phip at a}
\lim_{\xi \to a^+} \phi\rq{}(\xi)=
\begin{cases}
\frac{2 g(1)}{h(1)-c - \sqrt{\left(h(1)-c\right)^2 - 4 \dot{D}(1)g(1)}} \ &\mbox{ if } \ \dot{D}(1)<0,\\
\frac{g(1)}{h(1)-c} \ &\mbox{ if } \ \dot{D}(1)=0  \mbox{ and }  c>h(1),\\
-\infty \ &\mbox{ if } \ \dot{D}(1)=0  \mbox{ and } c\leq h(1).
\end{cases}
\end{equation}
}
\end{remark}

We now investigate the qualitative properties of the profiles when they reach the equilibrium $0$. The classification is complete, apart from some cases corresponding to $c^*=h(0)$, when further assumptions are needed, see Remark \ref{r:final?}. Below the existence of the $\lim_{\xi \to a^+}D\left(\phi(\xi)\right)\phi\rq{}(\xi)$ is a consequence of the definition \eqref{e:z from phi} and Lemma \ref{lem:zlimit}.

\begin{corollary}
\label{cor:swf qualitative behavior}
Under the assumptions of Theorem \ref{th:swf to zero}, let $c\ge c^*$ and $\phi$ be a strict semi-wavefront to $0$ of \eqref{e:E}, connecting $1$ to $0$, defined in its maximal-existence interval $(a,+\infty)$. Then, for $c>c^*$, there exists $\hat\beta(c)\in[\beta(c),0]$ such that the following results hold.
\begin{enumerate}[(i)]
\item  $D(0)>0$ implies that $\phi$ is classical and strictly decreasing.

\item $D(0)=0$, $c>c^*$ and
\begin{equation}
\label{e:phiclassical}
\lim_{\xi \to a^+}D\left(\phi(\xi)\right)\phi\rq{}(\xi)> \hat{\beta}(c),
\end{equation}
 imply that $\phi$ is classical; moreover, $\phi$ reaches $0$ at some $\xi_0 >a$ if
 \begin{equation}
 \label{e:g near zero}
 c>h(0)+\limsup_{\phi \to 0^+}\frac{g(\phi)}{\phi}.
 \end{equation}

 \item $D(0)=0$, $c^*>h(0)$ and
\begin{equation}
\label{e:phisharp}
\mbox{ either } \ c=c^* \ \mbox{ or } \ \lim_{\xi \to a^+}D\left(\phi(\xi)\right)\phi\rq{}(\xi)\le  \hat{\beta}(c)
\end{equation}
imply that $\phi$ is sharp at $0$ (reached at some $\xi_0>a$) with
\begin{equation}
\label{e:phiprime}
\lim_{\xi \to \xi_0^-} \phi\rq{}(\xi)=
\left\{
\begin{array}{ll}
\frac{h(0)-c}{\dot{D}(0)} <0 \ &\mbox{ if } \ \dot{D}(0)>0,\\[2mm]
-\infty \ &\mbox{ if } \ \dot{D}(0)=0.
\end{array}
\right.
\end{equation}
\end{enumerate}
\end{corollary}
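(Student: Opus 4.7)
The plan is to translate the statement into the first-order framework via the substitution $z(\varphi) := D(\varphi)\phi'(\xi(\varphi))$ from \eqref{e:z from phi}. Given a strict semi-wavefront $\phi$ on $(a,+\infty)$, the arguments already used in the proof of Theorem \ref{th:swf to zero} show that $z$ solves \eqref{first order problem} with $q=Dg$ for some $c\ge c^*$, with $z(1)=\lim_{\xi\to a^+}D(\phi(\xi))\phi'(\xi)\in[\beta(c),0]$. I take $\hat\beta(c)$ to be the threshold defined in \eqref{e:hatbeta}, so that Proposition \ref{prop:dotz at zero} identifies $\dot z(0)$ according to the relative position of $z(1)$ and $\hat\beta(c)$. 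Since $\phi$ reaches $0$ at a finite $\xi_0$ precisely when $\int_{0}^{\phi_*}D(\sigma)/(-z(\sigma))\,d\sigma<+\infty$ for some (equivalently any) $\phi_*\in(0,1)$, all three assertions will follow from analysing the behaviour of $z/D$ near $\phi=0$. A preliminary observation simplifies cases (ii) and (iii): with $D(0)=0$, $q(\phi)/\phi=(D(\phi)/\phi)g(\phi)\to\dot D(0)\cdot 0=0$, hence $\dot q(0)=0$ and consequently $s_+(c)=0$, $s_-(c)=h(0)-c$ for every $c\ge c^*$.

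Case (i) is immediate: with $D(0)>0$ and $z(0)=0$, the ratio $z/D$ extends continuously to $0$ with value $0$, so $\phi$ is classical, and strict decrease is \eqref{strict monotonicity}. Case (iii) is almost as short: Proposition \ref{prop:dotz at zero} gives $\dot z(0)=s_-(c)=h(0)-c<0$ (the assumption $c^*>h(0)$ being what separates $s_-$ from $s_+$), so $z(\phi)\sim(h(0)-c)\phi$ near $0$, and $\phi'=z/D$ tends to $(h(0)-c)/\dot D(0)$ or to $-\infty$ according as $\dot D(0)>0$ or $\dot D(0)=0$, matching \eqref{e:phiprime}; in both sub-cases $D/|z|$ is bounded near $0$, so $\xi_0<+\infty$ and $\phi$ is sharp at $0$.

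Case (ii) is the delicate one. From $\dot z(0)=s_+(c)=0$ combined with $D(\phi)\sim\dot D(0)\phi+o(\phi)$, an inspection of $z/D$ shows that $\phi'$ extends continuously at the equilibrium, so $\phi$ is classical. For the finite-reach statement under \eqref{e:g near zero}, the plan is to construct a strict upper-solution $\eta(\phi)=-K\phi$ of $\eqref{first order problem}_1$ on a right neighborhood of $0$, with $K\in(L,c-h(0))$, where $L:=\limsup_{\phi\to 0^+}g(\phi)/\phi$; this interval is non-empty precisely by virtue of \eqref{e:g near zero}. The upper-solution inequality reduces to $K<c-h(\phi)-D(\phi)g(\phi)/(K\phi)$, which holds near $0$ by the choice of $K$ together with $D(\phi)g(\phi)/\phi\to 0$. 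A comparison via Lemma \ref{lem:cm-dpde}, initiated at an interior point $\sigma_1>0$, would then yield the lower bound $|z(\phi)|\ge K\phi$ near $0$, making $D/|z|$ integrable and giving $\xi_0<+\infty$.

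The main obstacle is the propagation step in case (ii): the degeneracy $z(0)=\eta(0)=0$ prevents a direct application of Lemma \ref{lem:cm-dpde} at the origin, so the comparison must be propagated from some $\sigma_1>0$; moreover, verifying the upper-solution inequality uniformly on a right neighborhood of $0$ requires combining $\dot q(0)=0$ with the sharpened bound from \eqref{e:g near zero} to control $D(\phi)g(\phi)/(K\phi)$. The remaining parts of the three cases then fall into place directly from Proposition \ref{prop:dotz at zero} and the elementary asymptotic analysis of $z/D$ sketched above.
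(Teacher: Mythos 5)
Your framework matches the paper's: you pass to $z(\phi)=D(\phi)\phi'(\xi(\phi))$, identify $\hat\beta(c)$ with the threshold in \eqref{e:hatbeta} so that Proposition~\ref{prop:dotz at zero} fixes $\dot z(0)$, observe $\dot q(0)=0$ when $D(0)=0$, and test finiteness of $\xi_0$ via $\int_0 D(\sigma)/(-z(\sigma))\,d\sigma$, and case~(iii) is essentially the paper's argument. However, the ``finite reach'' step of case~(ii) is wrong and is not repairable along the lines you indicate. You aim to show $|z(\phi)|\ge K\phi$ near $0$, but you have already derived $\dot z(0)=s_+(c)=0$, so $z(\phi)=o(\phi)$ and $|z(\phi)|<K\phi$ on a right neighbourhood of $0$ for \emph{every} $K>0$: the bound you want is simply false. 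Nor can Lemma~\ref{lem:cm-dpde} produce it. Your $\eta(\phi)=-K\phi$ with $0<K<c-h(0)$ is indeed a strict upper-solution near $0$, but part (2.a.i) propagates $\eta\le z$ at the right endpoint to $\eta<z$ on the left, i.e.\ to $|z|<K\phi$, the opposite inequality; to push $\eta>z$ leftward you would need a strict \emph{lower}-solution with $\eta(\sigma_1)\ge z(\sigma_1)$ at the initiation point, and that fails for all small $\sigma_1$ by the same $z=o(\phi)$ remark. Moreover a linear barrier is too coarse in either direction: $|z|<K\phi$ only yields $D/(-z)>D/(K\phi)\to\dot D(0)/K$, a bounded lower bound, which decides nothing about the integral.

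The paper instead takes the sharper barrier $\eta(\phi)=-\phi D(\phi)$, which under \eqref{e:g near zero} is again a strict upper-solution near $0$; using a sequence $\phi_n\to 0^+$ with $\dot z(\phi_n)\to 0$ it establishes $\eta(\phi_n)<z(\phi_n)$, and Lemma~\ref{lem:cm-dpde}~(2.a.i) then gives $-\phi D(\phi)<z(\phi)$ near $0$, hence $D(\phi)/(-z(\phi))>1/\phi$ and $\int_0 D/(-z)\,d\sigma=+\infty$—a \emph{divergence}, the opposite of the convergence your sketch is aiming for. Two smaller gaps: in the ``classical'' part of~(ii) the sub-case $\dot D(0)=0$ makes $z/D$ a $0/0$ form, and one needs the auxiliary upper-solution $\eta=-\eps D(\phi)$ together with a dedicated comparison; your ``inspection of $z/D$'' elides this. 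And in~(i), concluding \emph{strict} decrease requires $\xi_0=+\infty$ (which follows from the divergence of $\int_0 D/(-z)$ when $D(0)>0$ and $\dot z(0)$ is finite), not merely \eqref{strict monotonicity}, which by itself permits $\phi$ to reach $0$ in finite time and be constant thereafter.
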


Notice that $\beta$ is related to the existence of the semi-wavefronts while $\hat \beta$ deals with their smoothness (see Figure \ref{f:3profiles}). The two thresholds coincide under the assumptions of Proposition \ref{p:beta=hatbeta}.

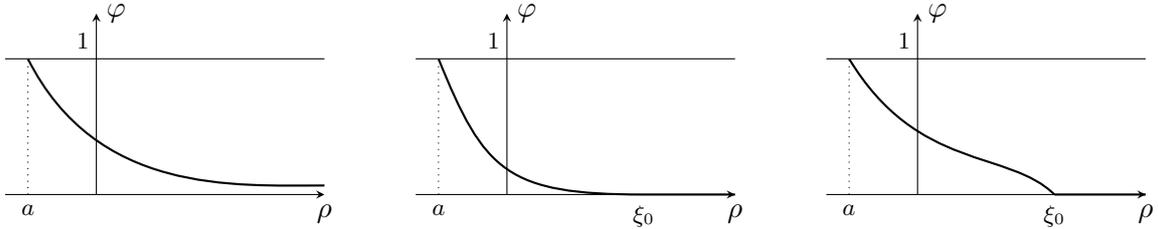
\begin{figure}[htb]
\begin{center}

\begin{tikzpicture}[>=stealth, scale=0.6]
\draw[->] (0,0) --  (5,0) node[below]{$\rho$} coordinate (x axis);
\draw       (0,0) --  (-2,0);
\draw       (-2,3) --  (5,3);
\draw[->] (0,0) -- (0,4) node[right]{$\phi$} coordinate (y axis);
\draw (0,3) node[left=5, above]{\footnotesize{$1$}};
\draw[thick] (-1.5,3) .. controls (0,0) and (3,0.2) .. (5,0.2);
\draw[dotted] (-1.5,0) node[below]{\footnotesize{$a$}} -- (-1.5,3);

\begin{scope}[xshift=9cm]
\draw[->] (0,0) --  (5,0) node[below]{$\rho$} coordinate (x axis);
\draw       (0,0) --  (-2,0);
\draw       (-2,3) --  (5,3);
\draw[->] (0,0) -- (0,4) node[right]{$\phi$} coordinate (y axis);
\draw (0,3) node[left=5, above]{\footnotesize{$1$}};
\draw[thick] (-1.5,3) .. controls (-0.5,0.5) and (0,0) .. (3,0) node[below]{\footnotesize{$\xi_0$}};
\draw[thick] (3,0) -- (5,0);
\draw[dotted] (-1.5,0) node[below]{\footnotesize{$a$}} -- (-1.5,3);
\end{scope}

\begin{scope}[xshift=18cm]
\draw[->] (0,0) --  (5,0) node[below]{$\rho$} coordinate (x axis);
\draw       (0,0) --  (-2,0);
\draw       (-2,3) --  (5,3);
\draw[->] (0,0) -- (0,4) node[right]{$\phi$} coordinate (y axis);
\draw (0,3) node[left=5, above]{\footnotesize{$1$}};
\draw[thick] (-1.5,3) .. controls (0,0.5) and (2,1) .. (3,0) node[below]{\footnotesize{$\xi_0$}};
\draw[thick] (3,0) -- (5,0);
\draw[dotted] (-1.5,0) node[below]{\footnotesize{$a$}} -- (-1.5,3);
\end{scope}

\end{tikzpicture}

\end{center}
\caption{\label{f:3profiles}{Examples of profiles occurring in Corollary \ref{cor:swf qualitative behavior}. From the left to the right, they depict, respectively, what stated in Parts {\em (i)}, {\em (ii)} and {\em (iii)}.}}
\end{figure}

\smallskip

\begin{proofof}{Corollary \ref{cor:swf qualitative behavior}}
Define $\xi_0:=\sup\left\{\xi >a: \phi(\xi)>0\right\}\in (a, +\infty]$. We assume without loss of generality that $a<0<\xi_0$ and $\phi(0)=1/2$. Let $z$ be the function defined in \eqref{e:z from phi}. Notice, $1=D(\phi)\phi\rq{}/z(\phi)$ if $\phi \in (0,1)$. Thus, for any $\xi >0$, it follows that
\begin{equation*}
\xi=\int_{0}^{\xi}\frac{D\left(\phi(s)\right)}{z\left(\phi(s)\right)}\,\phi\rq{}(s)ds=\int_{1/2}^{\phi(\xi)} \frac{D(\sigma)}{z(\sigma)}\,d\sigma=\int_{\phi(\xi)}^{1/2}\frac{D(\sigma)}{-z(\sigma)}\,d\sigma.
\end{equation*}
Therefore, $\xi_0\in\R$ if and only if it holds that
\begin{equation}
\label{e:integral control}
\int_{0}^{1/2}\frac{D(\sigma)}{-z(\sigma)}\,d\sigma:=\lim_{\phi\to 0^+}\int_{\phi}^{1/2}\frac{D(\sigma)}{-z(\sigma)}\,d\sigma < + \infty.
\end{equation}
For $c>c^*$, let $\hat{\beta}(c)$ be given by \eqref{e:hatbeta}.

We prove {\em (i)}.
By Proposition \ref{prop:dotz at zero} we know that $\dot z(0)$ exists and it is finite; since
$D(0)>0$ we deduce that \eqref{e:integral control} does not hold. Then, $\xi_0 =+\infty$ and so $\phi$ is strictly decreasing. This, and the fact that $\phi$ is of class $C^2$ when $\phi \in (0,1)$, imply $\phi\in C^2(a,+\infty)$, hence $\phi$ is classical. Part {\em (i)} is hence showed.

\smallskip

Assume $D(0)=0$. In this case, Formula \eqref{e:D1g1 at zero} holds with $\dot q(0)=0$ and $\dot{z}(0)$ exists by Proposition \ref{prop:dotz at zero}.

We show {\em (ii)}. Since \eqref{e:phiclassical} holds then \eqref{derivative of z at zero} reads as $\dot{z}(0)=0$. We treat separately the cases $\dot{D}(0)>0$ or $\dot{D}(0)=0$. Suppose that $\dot{D}(0)>0$. Therefore,
\begin{equation}
\label{e:limitphi2}
\lim_{\xi \to \xi_0^-} \phi\rq{}(\xi)=\frac{\dot{z}(0)}{\dot{D}(0)}= 0
\end{equation}
and hence $\phi$ (not necessarily strictly monotone) is classical. Suppose then $D(0)=\dot{D}(0)=0$. Fix $\varepsilon>0$ and define $\eta(\phi):=-\varepsilon D(\phi)$, $\phi \in (0, 1)$. We have
$$
\dot{\eta}(\phi) -h(\phi) + c + \frac{D(\phi)g(\phi)}{\eta(\phi)}\to -h(0) +c >0,\ \mbox{ as } \ \phi \to 0^+.
$$
Therefore $\eta$ is a strict upper-solution of $\eqref{first order problem000}_1$ in $(0,\delta]$, for some $\delta >0$.
Also, since $\dot{z}(0)=0$, there exists a sequence $\{\phi_n\}_n$, with $\delta \ge \phi_n \to 0^+$, such that $\dot{z}(\phi_n)\to 0$. From $\eqref{first order problem000}_1$, this implies that
$$
\lim_{n\to \infty} \frac{\varepsilon D(\phi_n)}{-z(\phi_n)}=\varepsilon\lim_{n\to \infty}\frac{\dot{z}(\phi_n) +c - h(\phi_n)}{g(\phi_n)}=\infty.
$$
Hence, $-\eta(\delta_1)=\varepsilon D(\delta_1) >-z(\delta_1)$, for some $0 < \delta_1 \le \delta$ small enough. An application of Lemma \ref{lem:cm-dpde} {\em (2.a.i)} then gives
\begin{equation}
\label{e:zD1}
z(\phi) > -\varepsilon D(\phi), \ \phi \in (0,\delta_1].
\end{equation}
This clearly implies that
$$
0> \frac{z(\phi)}{D(\phi)}> -\varepsilon, \ \phi \in (0,\delta_1].
$$
Since $\varepsilon>0$ is arbitrary, then we have $\phi\rq{}(\xi)\to 0$ for $\xi \to \xi_0^-$ and hence $\phi$ is classical, that is we showed the first part of {\em (ii)}. Define $\eta(\phi):=-\phi D(\phi)$. We have, for any $\phi \in (0,1)$,
\begin{equation*}
\dot{\eta}(\phi)-h(\phi) + c +\frac{D(\phi)g(\phi)}{\eta(\phi)}=-\dot{D}(\phi)\phi -D(\phi)-h(\phi)+c -\frac{g(\phi)}{\phi}.
\end{equation*}
Thus, by means of \eqref{e:g near zero}, we get
$$
\liminf_{\phi\to 0^+} \left[\dot{\eta}(\phi)-h(\phi) + c +\frac{D(\phi)g(\phi)}{\eta(\phi)}\right]=c-h(0)-\limsup_{\phi\to 0^+} \frac{g(\phi)}{\phi}>0.
$$
Therefore, $\eta$ is a strict upper-solution of $\eqref{first order problem000}_1$ in $(0,\delta]$, for some $\delta>0$. Furthermore, taking the same sequence $\phi_n\to 0^+$ as above such that $\dot{z}(\phi_n)\to 0$, as $n \to \infty$, then we have
$$
\liminf_{n\to \infty}
\frac{D(\phi_n)\phi_n}{-z(\phi_n)} =\liminf_{n\to \infty} \frac{\dot{z}(\phi_n)+c-h(\phi_n)}{g(\phi_n)/\phi_n}=\frac{c-h(0)}{\limsup_{n\to \infty}g(\phi_n)/\phi_n} >1,
$$
since \eqref{e:g near zero} holds. Thus, as in \eqref{e:zD1}, we deduce that $D(\phi)\phi > -z(\phi)$ in $(0,\delta]$, after choosing $0 < \delta \le 1/2$ small enough. Hence,
$$
\int_{0}^{1/2} \frac{D(\sigma)}{-z(\sigma)}\,d\sigma > \int_{0}^{\delta} \frac{d\sigma}{\sigma}=+\infty,
$$
which concludes the proof of {\em (ii)}, by means of \eqref{e:integral control}.

\smallskip
We show {\em (iii)}. By \eqref{derivative of z at zero}, \eqref{e:dotz 2}, $c^*>h(0)$ and \eqref{e:phisharp} we obtain $\dot{z}(0)=h(0)-c<0$. Then,
$$
\frac{D(\sigma)}{-z(\sigma)}=\frac{\dot{D}(0) + o(1)}{c-h(0) + o(1)}\ \mbox{ as } \ \sigma \to 0^+,
$$
and consequently \eqref{e:integral control} is verified. Thus, $\xi_0 \in \R$. Furthermore, from \eqref{e:z from phi},
$$
\lim_{\xi \to \xi_0^-} \phi\rq{}(\xi)=\lim_{\phi \to 0^+} \frac{z(\phi)/\phi}{D(\phi)/\phi}=\frac{h(0)-c}{\dot{D}(0)} \in [-\infty, 0),
$$
which implies that $\phi$ is sharp at $0$ and that \eqref{e:phiprime} holds.
\end{proofof}

\section{New regularity classification of wavefronts}
\label{s:rnew egularity of fronts}
\setcounter{equation}{0}

In this section we prove Theorem \ref{th:regularity wf}. Analogously to Section \ref{s:existence_0alpha}, but now thanks to assumptions (D0) - (g01), we apply results of Sections \ref{s:sing2} -- \ref{ssec:dotz at zero} to the case $q=Dg$.

\begin{proofof}{Theorem \ref{th:regularity wf}}
We first show that wavefronts are allowed if and only if $c \ge c^*$ for $c^*$ satisfying \eqref{e:est c*3}; the proof is mostly contained in the proof of Theorem \ref{th:swf to zero}. Then, we prove {\em (i)} and {\em (ii)}, by exploiting some of the arguments in the proof of Corollary \ref{cor:swf qualitative behavior}.

Set $q=Dg$. Clearly, $q$ satisfies (q), with in particular $\dot{q}(0)=0$. By Proposition \ref{prop: first order}, Problem \eqref{first order problem 0-0} admits a unique solution $z$ if and only if $c\ge c^*$ where for $c^*$ it holds \eqref{estimates on c*}. As observed in Remark \ref{rem:MP}, since (D0) and (g01) hold true, in this case $c^*$ satisfies \eqref{e:est c*3}.
\par
To the solution $z$ there is associated the solution  $\phi=\phi(\xi)$ of the problem
\begin{equation}
\label{cauchy3}
\begin{cases}
\phi\rq{}=\frac{z(\phi)}{D(\phi)},\\
\phi(0)=\frac1{2}.
\end{cases}
\end{equation}
Such a $\phi$ exists and satisfies $\eqref{cauchy3}_1$ in some maximal interval $(\xi_1,\xi_0)$, so that
\[
\lim_{\xi \to \xi_1^+}\phi(\xi)=1 \ \mbox{ and } \ \lim_{\xi \to \xi_0^-}\phi(\xi)= 0.
\]
 Also, $\phi$ satisfies \eqref{e:ODE} in $(\xi_1,\xi_0)$. As discussed in the proof of Theorem \ref{th:swf to zero}, if $\xi_0 \in \R$, then $\phi$ can be extended continuously to a solution of \eqref{e:ODE} in $(\xi_0,+\infty)$, by setting $\phi(\xi)=0$, for $\xi \ge \xi_0$. Since $g(1)=0$, it also holds that if $\xi_1\in \R$ then we can extend $\phi$ to a solution of \eqref{e:ODE} in $(-\infty, \xi_1)$, by setting $\phi(\xi)=1$ for $\xi\le \xi_1$. Thus, we can always consider $\phi$ satisfying weakly \eqref{e:ODE} in $\R$; moreover $\phi$ solves $\eqref{cauchy3}_1$ in $(\xi_1,\xi_0)$ with
\[
\xi_1=\inf\left\{ \xi \in \R: \phi(\xi)<1\right\}\in [-\infty, 0), \ \xi_0=\sup\left\{\xi \in \R: \phi(\xi)>0\right\} \in (0, + \infty],
\]
and it is constant in $\R\setminus (\xi_1,\xi_0)$. Thus, we showed that if $c\ge c^*$ then there exists a wavefront $\phi$ whose profile satisfies \eqref{e:infty}.

By reasoning as in the proof of Theorem \ref{th:swf to zero}, also the converse implication holds. Indeed, if $\phi$ is a profile of a wavefront satisfying \eqref{e:infty}, then the function $z$ defined by
$z(\phi):= D(\phi)\phi\rq{}\left(\phi^{-1}(\phi)\right)$, $0<\phi < 1$,
is a solution of \eqref{first order problem 0-0}. Thus, $c\ge c^*$.

\smallskip
We prove {\em (i)}. Assume $c>c^*$. From \eqref{derivative of z at zero} in Proposition \ref{prop:dotz at zero}, we have $\dot{z}(0)=0$. Hence, if $\dot{D}(0)\neq 0$ then it holds
\begin{equation}
\label{e:classical at zero}
\lim_{\xi \to \xi_0^-} \phi\rq{}(\xi)=\lim_{\phi \to 0^+} \frac{z(\phi)}{D(\phi)}=0.
\end{equation}
If $\dot{D}(0)=0$, then we argue as in the proof of Corollary \ref{cor:swf qualitative behavior}, see \eqref{e:zD1}, to show that, for any $\eps>0$ there exists $\delta\in (0,1)$ such that $z(\phi)> -\eps D(\phi)$, $\phi \in (0,\delta]$.
Hence,
\[
\lim_{\xi \to \xi_0^-}
\phi\rq{}(\xi) = \lim_{\phi \to 0^+}\frac{z(\phi)}{D(\phi)} \ge -\eps.
\]
Since $\phi\rq{}<0$ in $(\xi_1,\xi_0)$ and $\eps$ is arbitrarily small, it follows again \eqref{e:classical at zero}.
\par
We prove now {\em (ii)}. By $\eqref{derivative of z at zero}_2$, from $c=c^*>h(0)$ we have $\dot{z}(0)=h(0)-c^*<0$. Then,
$$
\frac{D(\sigma)}{-z(\sigma)}=\frac{\dot{D}(0) + o(1)}{c-h(0) + o(1)}\ \mbox{ as } \ \sigma \to 0^+,
$$
and consequently \eqref{e:integral control} is verified. Thus, $\xi_0 \in \R$. Furthermore, from \eqref{e:z from phi},
$$
\lim_{\xi \to \xi_0^-} \phi\rq{}(\xi)=\lim_{\phi \to 0^+} \frac{z(\phi)/\phi}{D(\phi)/\phi}=\frac{h(0)-c^*}{\dot{D}(0)} \in [-\infty, 0),
$$
and thus the conclusions hold.
\end{proofof}

\begin{remark}[{Case $c=c^*=h(0)$}]
\label{r:final?}
{
\rm
Part {\em (i)} and {\em (ii)} of Theorem \ref{th:regularity wf} do not cover the case $c=c^*=h(0)$. The following discussion shows that, to classify the behavior in that case, further assumptions are needed. More precisely, either a classical and a sharp wavefront can indeed occur under (D0) and (g01). Take $q$ and $h$ as in \eqref{e:qhrem} in Remark \ref{rem:c*=0}. There, we proved that in this case it holds $c^*=h(0)=0$. Consider
\begin{equation*}
\begin{cases}
D_1(\phi)= \phi^2, \\
g_1(\phi)=\phi(1-\phi),
\end{cases}
\
\begin{cases}
D_2(\phi)= \phi, \\
g_2(\phi)=\phi^2(1-\phi).
\end{cases}
\end{equation*}
Clearly, $D_1$ and $g_1$ satisfy (D0) and (g01) and so $D_2$ and $g_2$. Also, since $D_1g_1=q=D_2g_2$, then $c_1^{*}=c_2^{*}=h(0)=0$, where $c_1^{*}$ and $c_2^{*}$ are the thresholds given by Proposition \ref{prop: first order} associated with $D_1 g_1$ and $D_2g_2$, respectively.
Define, for $\xi \in \R$,
\begin{equation*}
\phi_1(\xi):=
\left\{
\begin{array}{ll}
1 - \frac{e^\xi}{2}, & \xi < \log(2),\\
0, & \mbox{ otherwise,}
\end{array}
\right.
\ \mbox{ and } \
\phi_2(\xi):= \frac{1}{1+e^{\xi}}.
\end{equation*}
Direct computations show that $\phi_1$ and $\phi_2$ are two wave profiles defining two wavefronts, both of them associated with $c=h(0)$. Plainly, $\phi_1$ is sharp at $\xi=\log(2)$ while $\phi_2$ is classical.
}
\end{remark}

\section*{Acknowledgments}
The authors are members of the {\em Gruppo Nazionale per l'Analisi Matematica, la Probabilit\`{a} e le loro Applicazioni} (GNAMPA) of the {\em Istituto Nazionale di Alta Matematica} (INdAM) and acknowledge financial support from this institution.

{\small
\bibliography{refe_BCM}
\bibliographystyle{abbrv2}
}

\end{document}